    \newcommand{\G}{\mathbb{G}} 
    \newcommand{\LSL}{\mathfrak{sl}}
    \newcommand{\SL}{\operatorname{SL}}
    \newcommand{\LG}{\mathfrak{g}}
    \newcommand{\LT}{\mathfrak{t}}
    \newcommand{\LN}{\mathfrak{n}}
    \newcommand{\LB}{\mathfrak{b}}
    \newcommand{\QCoh}{\operatorname{QCoh}}
    \newcommand{\IndCoh}{\operatorname{IndCoh}}
    \newcommand{\C}{\mathcal{C}} 
    \newcommand{\D}{\mathcal{D}} 
    \renewcommand{\\}{\backslash}
    \theoremstyle{definition}
    \newtheorem{Theorem}{Theorem}[section]
    \newtheorem{Question}[Theorem]{Question}
    \newtheorem{Corollary}[Theorem]{Corollary}
    \newtheorem{Definition}[Theorem]{Definition}
    \newtheorem{Proposition}[Theorem]{Proposition}
    \newtheorem{Remark}[Theorem]{Remark}
    \newtheorem{Lemma}[Theorem]{Lemma}
    \title{Proof of the Ginzburg-Kazhdan Conjecture}
    \author{Tom Gannon}
    \newcommand{\AvN}{\text{Av}_*^N}
    \newcommand{\Avpsi}{\text{Av}_{!}^{\psi}}
    \newcommand{\Symt}{\text{Sym(}\mathfrak{t}\text{)}}
    \newcommand{\LTd}{\LT^{\ast}}
\begin{document}
    
\renewcommand{\G}{\mathcal{G}}
\newcommand{\AvNTw}{\text{Av}_*^{N, (T, w)}}
\newcommand{\AvGw}{\text{Av}_{\ast}^{G,w}}
\newcommand{\pifin}{\pi_{\text{fin}}}
\newcommand{\pifinL}{\pi_{\text{fin,L}}}

    \newcommand{\ELeftAdjoint}{\text{ev}_{\omega_{\LTd}}}
\newcommand{\ClGlobalDiffOp}{\text{H}^0\Gamma(\mathcal{D}_{G/N})}
\newcommand{\GlobalDiffOp}{\Gamma(\mathcal{D}_{G/N})}
\newcommand{\indsch}{\mathcal{X}}

    \newcommand{\DGCatContk}{\text{DGCat}^k_{\text{cont}}}
\newcommand{\DGCatContL}{\text{DGCat}^L_{\text{cont}}}

\newcommand{\DNTw}{\mathcal{D}(N\backslash G/N)^{T_r,w}}
\newcommand{\DNTwWhit}{\mathcal{D}(N^-_{\psi}\backslash G/N)^{T_r,w}}
\newcommand{\DNWhit}{\mathcal{D}(N^-_{\psi}\backslash G/N)}
\newcommand{\DNTwldeg}{\mathcal{D}(N \backslash G/N)^{T_r,w}_{\text{left-deg}}}
\newcommand{\DNTwnondeg}{\mathcal{D}(N \backslash G/N)^{T_r,w}_{\text{nondeg}}}
\newcommand{\DN}{\mathcal{D}(N\backslash G/N)}
\newcommand{\DNldeg}{\mathcal{D}(N \backslash G/N)_{\text{left-deg}}}
\newcommand{\DNnondeg}{\mathcal{D}(N \backslash G/N)_{\text{nondeg}}}
\newcommand{\DNlambda}{\mathcal{D}^{\lambda}(N \backslash G/B)}
\newcommand{\Dpsilambda}{\mathcal{D}^{\lambda}(N^- _{\psi}\backslash G/B)}
\newcommand{\DbiTw}{\mathcal{D}(N \backslash G/N)^{T \times T, w}}
\newcommand{\DbiTwnondeg}{\mathcal{D}(N \backslash G/N)^{T \times T, w}_{\text{nondeg}}}
\newcommand{\DbiTwnondegheart}{\mathcal{D}(N \backslash G/N)^{(T \times T, w), \heartsuit}_{\text{nondeg}}}
\newcommand{\DbiTwdeg}{\mathcal{D}(N \backslash G/N)^{T \times T, w}_{\text{deg}}}
\newcommand{\DNBlambda}{\mathcal{D}(N \backslash G/_{\lambda}B)}
\newcommand{\DNTwBlambda}{\mathcal{D}(N \backslash G/_{\lambda}B)}
\newcommand{\DWhitBlambda}{\mathcal{D}(N^-_{\psi}\backslash G/_{\lambda}B)}
\newcommand{\HN}{\D(N \backslash G/N)}
\newcommand{\HNTw}{\D(N \backslash G/N)^{T \times T, w}}
\newcommand{\HNTwabbreviated}{\mathcal{H}^{N, (T,w)}}
\renewcommand{\indsch}{\mathcal{X}}
\newcommand{\wdot}{\dot{w}}
\newcommand{\Gaminusalpha}{\mathbb{G}_a^{-\alpha}}
\newcommand{\Aone}{\mathbf{A}}
\newcommand{\Atwo}{\mathcal{L}\text{-mod}(\Aone)}
\newcommand{\algobj}{\mathcal{A}}
\newcommand{\newalgobj}{\mathcal{A}'}
\newcommand{\tilder}{\tilde{r}}
\newcommand{\Ccirc}{\mathring{\C}}
\newcommand{\rootlattice}{\mathbb{Z}\Phi}
\newcommand{\characterlatticeforT}{X^{\bullet}(T)}
\newcommand{\Spec}{\text{Spec}}
\newcommand{\FourierMukai}{\text{FMuk}}
\newcommand{\oneshiftedCartierdual}{c_1}
\newcommand{\quotientmapforcoarsequotient}{\overline{s}}

\newcommand{\tildeV}{\tilde{\mathbb{V}}}
\newcommand{\Vdual}{V^{\vee}}
\newcommand{\generalstacktoGITquotientmap}{\phi}
\newcommand{\SpecofL}{\text{Spec}(L)}
\newcommand{\terminalmapfromC}{\alpha}
\newcommand{\terminalmapfromCmodassociatedstabilizer}{\dot{\alpha}}
\newcommand{\Hpsiliteral}{\mathcal{H}_{\psi}}
\newcommand{\AvNshifted}{\AvN[\text{dim}(N)]}
\newcommand{\hyperplanefixedbys}{V^{\ast}_{s = \text{id}}}
\newcommand{\fieldpossiblydifferentfromgroundfield}{K}
\newcommand{\Avpsishifted}{\Avpsi[-\text{dim}(N)]}
\newcommand{\FI}{F_{I}}
\newcommand{\FD}{F_{\mathcal{D}}}
\newcommand{\LI}{s_*^{\IndCoh}}
\newcommand{\LD}{\Avpsi}
\newcommand{\LIenh}{\pi_*^{\IndCoh, \text{enh}}}
\newcommand{\LDenh}{\text{Av}_!^{\psi, \text{enh}}}
\newcommand{\Ind}{\text{Ind}}
\newcommand{\Res}{\text{Res}}
\newcommand{\IndWithoutSignRep}{\text{Ind}(-)^W}
\newcommand{\ResWithoutSignRep}{\text{WRes}}
\newcommand{\IndWithSignRep}{\text{Ind}(- \otimes k_{\text{sign}})^W}
\newcommand{\ResWithSignRep}{\text{WRes}_s}
\newcommand{\universalhorocyclefunctor}{\widetilde{\text{hc}}}
\newcommand{\universalparabolicrestriction}{\widetilde{\text{Res}}}
\newcommand{\universalLIFTEDparabolicrestriction}{\widetilde{\text{WRes}}}
\newcommand{\parabolicrestrictionLIFTED}{\text{WRes}}
\newcommand{\horocycleFunctor}{\text{hc}}
\newcommand{\nondegenerateHorocycleFunctor}{\text{hc}_{\text{n}}}
\newcommand{\nondegenerateUniversalHorocycleFunctor}{\universalhorocyclefunctor_{\text{n}}}
\newcommand{\V}{\mathcal{V}}
\newcommand{\parabolicrestriction}{\text{Res}}

\newcommand{\conjugacyclassofstandardLevis}{\underline{\Theta}}
\newcommand{\moduliOfEvenMonicPolynomialsOfTHISDEGREE}[1]{\mathcal{M}^{#1}}
\newcommand{\moduliOfALLEvenPolynomialsOfTHISDEGREE}[1]{\mathcal{P}^{#1}}
\newcommand{\moduliOfEvenMonicPolynomialsOfdegreeTWOwithMINUSSIGN}{\mathcal{M}_{-}^{2}}

\newcommand{\basedQuasimapsfromProjectiveLinetoAffineClosureofSLTWOwithIsoClassofTHISDEGREE}[1]{\text{Maps}_*^{#1}(\mathbb{P}^1, \overline{\SL_2/N}/T)}
\newcommand{\basedQuasimapsfromProjectiveLinetoAffineClosureofSLTWOwithISOtoTHISDEGREE}[1]{\text{Maps}_*^{#1, \simeq}(\mathbb{P}^1, \overline{\SL_2/N}/T)}
\newcommand{\oblv}{\text{oblv}}
\newcommand{\explicitIsoofBasedQMapsforSLTWOwithPolynomialsforMapsofTHISDEGREE}[1]{\Phi_{#1}}
\newcommand{\basedQuasimapsfromProjectiveLinetoAffineClosureofBORELwithIsoClassofTHISDEGREE}[1]{\text{Maps}_*^{#1}(\mathbb{P}^1, \overline{B/N}/T)}

\newcommand{\basedmapsFromProjectiveLinetoTHISSPACE}[1]{\text{Maps}_*(\mathbb{P}^1, #1)}

\newcommand{\resolutionOfSingularitiesSpaceForBasicAffineSpace}{G\mathop{\times}\limits^{B} E}

\newcommand{\affineClosureofBasicAffineSpace}{\overline{G/N}}
\newcommand{\affineClosureOfCotangentBundleofBasicAffineSpace}{\overline{T^*(G/N)}}
\newcommand{\SpecOfSymofSectionsOfTangentBundleOfBasicAffineSpace}{\text{Spec}(\text{Sym}_{\affineClosureofBasicAffineSpace}^{\bullet}(\mathcal{T}_{\overline{G/N}}))}
\newcommand{\ringOfFunctionsForBasicAffineSpace}{A}
\newcommand{\ringOfFunctionsForCOTANGENTBUNDLEOfBasicAffineSpace}{R}
\newcommand{\tangentSheafForBasicAffineSpace}{\mathcal{T}_{G/N}}
\newcommand{\symOfDirectSumOfRepsofFundamentalWeights}{\text{Sym}(\oplus_i E(\omega_i))}
\newcommand{\Sym}{\text{Sym}}
\newcommand{\projectionFromAffineClosureofCotangentBundleToAffineClosureofSpace}{\overline{\pi}}
\newcommand{\momentMapFromAFFINECLOSUREofCotangentSpaceWithGROUPG}{\overline{\mu}_G}
\newcommand{\momentMapFromAFFINECLOSUREofCotangentSpaceWithGROUPT}{\overline{\mu}_T}
\newcommand{\momentMapFromAFFINECLOSUREofCotangentSpacewithTOTALGROUP}{\overline{\mu}}
\newcommand{\singularLocusOfAffineClosureOfCOTANGENTBUNDLEofBasicAffineSpace}{Z}
\newcommand{\singularLocusOfAffineClosureofBasicAffineSpace}{Z_0}
\newcommand{\idealForSingularLocusOfAffineClosureOfCOTANGENTBUNDLEofBasicAffineSpace}{I_Z}
\newcommand{\groundfield}{k}
\newcommand{\LGd}{\mathfrak{g}^*}
\newcommand{\rhocheck}{\rho^{\vee}}
\newcommand{\unipotentRadicalOfPARABOLICSUBGROUP}{U_P}
\newcommand{\lieAlgebraOfUnipotentRadicalOfPARABOLICSUBGROUP}{\mathfrak{u}_P}
\newcommand{\affinizationOfGrothendieckSpringerResolution}{\LGd \times_{\LTd\sslash W} \LTd}
\newcommand{\smoothLocusOfAffineclosureBasicAffineSpace}{\mathcal{S}}

\newcommand\blfootnote[1]{%
  \begingroup
  \renewcommand\thefootnote{}\footnote{#1}%
  \addtocounter{footnote}{-1}%
  \endgroup
}

\maketitle
\section{Introduction}
The goal of this paper is to prove the following theorem, which in particular confirms a conjecture of Ginzburg and Kazhdan \cite[Conjecture 1.3.6]{GinzburgKazhdanDifferentialOperatorsOnBasicAffineSpaceandtheGelfandGraevAction}:

\begin{Theorem}\label{Affine Closure of Cotangent Bundle of Basic Affine Space Has Symplectic Singularities}
    The variety $\affineClosureOfCotangentBundleofBasicAffineSpace$ has conical symplectic singularities.
\end{Theorem}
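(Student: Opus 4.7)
The strategy is to verify Beauville's definition: construct a resolution of $\affineClosureOfCotangentBundleofBasicAffineSpace$ and show that the symplectic form on the smooth locus pulls back to a holomorphic two-form on the resolution. Conicality will follow from a natural contracting $\mathbb{G}_m$-action under which the symplectic form has positive weight.

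The geometric setup is essentially due to Ginzburg-Kazhdan. The variety $\affineClosureOfCotangentBundleofBasicAffineSpace$ is normal; its smooth locus $\smoothLocusOfAffineclosureBasicAffineSpace$ contains $T^*(G/N)$ and inherits the exterior derivative of the Liouville form as a symplectic structure $\omega$; and the singular locus $\singularLocusOfAffineClosureOfCOTANGENTBUNDLEofBasicAffineSpace$ has codimension at least two. A contracting $\mathbb{G}_m$-action is obtained by combining fiberwise dilation on cotangent vectors with a regular dominant coweight of $T$, and under this action $\omega$ has strictly positive weight. The remaining ingredient is the extension of $\omega$ across the exceptional locus of a resolution.

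The plan is to construct a proper birational $G \times T$-equivariant map $\pi : Y \to \affineClosureOfCotangentBundleofBasicAffineSpace$ from a smooth variety $Y$, built from the known resolution $\resolutionOfSingularitiesSpaceForBasicAffineSpace$ of $\affineClosureofBasicAffineSpace$ via a cotangent-type construction, and then verify $R^i \pi_* \O_Y = 0$ for $i > 0$. Given such vanishing, rationality of singularities combined with the symplectic form on the smooth locus yields extension of $\omega$ to $Y$ by the standard argument (using Grauert-Riemenschneider together with the Gorenstein property, which follows from the top exterior power of $\omega$ trivializing the canonical sheaf on the codimension-$\geq 2$ open $\smoothLocusOfAffineclosureBasicAffineSpace$ and Hartogs-extending). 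The cohomology vanishing itself would be approached via $G$-equivariance, reducing to Borel-Weil-Bott or Kempf vanishing on $G/B$-bundle fibers.

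The main obstacle is that the cotangent bundle of a resolution of $\affineClosureofBasicAffineSpace$ is not automatically a resolution of $\affineClosureOfCotangentBundleofBasicAffineSpace$: one must identify the right birational model $Y$ and verify that the natural map onto the affine closure is indeed birational, and this is quite delicate in general. This is where the categorical machinery developed earlier in the paper --- in particular the analysis of the non-degenerate $\mathcal{D}$-module category $\Cat$, its Whittaker-averaged variants, and the parabolic restriction and averaging functors relating them --- is expected to be essential: it translates the desired geometric properties into equivalences of categorical invariants that can be verified by functor manipulations, circumventing the need for an explicit classical description of the resolution.
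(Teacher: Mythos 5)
Your proof plan correctly identifies the general framework (normality, conical $\mathbb{G}_m$-action, extension of the symplectic two-form to a resolution), and the $\mathbb{G}_m$-action you describe (fiberwise dilation combined with $2\rho^\vee$) is indeed the one used in the paper. However, the core of your plan has a genuine gap, and the route you propose is not the one the paper takes.

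The gap is in the final step, where the argument needs to extend $\omega$ across the exceptional locus of a resolution. You propose to (a) explicitly build a smooth birational model $Y \to \affineClosureOfCotangentBundleofBasicAffineSpace$ out of $\resolutionOfSingularitiesSpaceForBasicAffineSpace$, (b) establish $R^i\pi_*\mathcal{O}_Y = 0$, and (c) deduce extendability from Grauert--Riemenschneider plus Gorenstein-ness. You yourself flag the main difficulty: there is no guarantee that a cotangent-type construction over a resolution of $\affineClosureofBasicAffineSpace$ is proper and birational onto $\affineClosureOfCotangentBundleofBasicAffineSpace$, and in fact this fails (the affine closure of $T^*(G/N)$ is \emph{not} the cotangent bundle of the affine closure of $G/N$). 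Your proposed escape route --- ``the categorical machinery developed earlier in the paper, in particular the non-degenerate $\mathcal{D}$-module category $\Cat$ and its Whittaker-averaged variants'' --- does not exist: no such categorical apparatus appears anywhere in this paper, so there is nothing to supply the missing identification of $Y$ or the cohomology vanishing. As stated, the argument cannot be completed.

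The actual mechanism the paper uses sidesteps resolutions entirely. The key new input is a \emph{codimension estimate on the singular locus}: the paper constructs the open locus $Q$ where the right $T$-action on $\affineClosureOfCotangentBundleofBasicAffineSpace$ is free, shows $Q$ is smooth and that its complement has codimension $\geq 4$ (\cref{Complement of Locus of Points on Which T Acts Freely for ARBITRARY REDUCTIVE GROUP Is at least Codimension Four}, built from the simply connected case via factoriality of $R$, irreducibility of the Peter--Weyl generators, and the stratification of $\affineClosureofBasicAffineSpace$). Once the singular locus has codimension $\geq 4$, Flenner's extendability theorem for differential forms (recorded as \cref{Normal Variety with Nondeg Form on Enough of Regular Locus with High Dimensional Singular Locus Has Symplectic Singularities}) directly gives that the symplectic two-form on $T^*(G/N)$ extends to a two-form on \emph{any} resolution, with no need to exhibit a resolution, prove rational singularities, or invoke Grauert--Riemenschneider. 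This codimension bound is the missing idea in your proposal; without it (or an explicit resolution plus vanishing), the extension step does not go through.
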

Here, $G$ denotes a connected semisimple group over $\mathbb{C}$, $N = [B, B]$ denotes the unipotent radical of some Borel $B$, and $\affineClosureOfCotangentBundleofBasicAffineSpace$ is the affinization of the quasi-affine variety $T^*(G/N)$. We also prove in \cref{affineClosureOfCotangentBundleofBasicAffineSpace has symplectic singularities for ARBITRARY REDUCTIVE GROUP} that $\affineClosureOfCotangentBundleofBasicAffineSpace$ has symplectic singularities if $G$ is reductive, although the $\mathbb{G}_m$-action we construct is not conical if $G$ is not semisimple. 
\vspace{0.02in}

\cref{Affine Closure of Cotangent Bundle of Basic Affine Space Has Symplectic Singularities} implies that $\affineClosureOfCotangentBundleofBasicAffineSpace$ conjecturally admits a \textit{symplectic dual} in the sense of Braden-Licata-Proudfoot-Webster. We also prove the following theorem, which determines properties of this conjectural dual:

\begin{Theorem}\label{affineClosureOfCotangentBundleofBasicAffineSpace is Q-Factorial Terminal and is Factorial When G is Simply Connected}
    The variety $\affineClosureOfCotangentBundleofBasicAffineSpace$ is $\mathbb{Q}$-factorial and has terminal singularities. Moreover, if $G$ is simply connected then the ring of functions of $\affineClosureOfCotangentBundleofBasicAffineSpace$ is a unique factorization domain. 
\end{Theorem}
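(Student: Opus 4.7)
By \cref{Affine Closure of Cotangent Bundle of Basic Affine Space Has Symplectic Singularities}, $X := \affineClosureOfCotangentBundleofBasicAffineSpace$ has symplectic singularities; in particular $X$ is normal and Gorenstein. The plan is to deduce both conclusions from two classical ingredients: (i) a normal affine variety is $\mathbb{Q}$-factorial (resp.\ factorial) if and only if its divisor class group is finite (resp.\ trivial), and (ii) Namikawa's criterion that a symplectic variety has terminal singularities if and only if its singular locus has codimension at least four.

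The $\mathbb{Q}$-factorial and UFD claims follow from a computation of $\operatorname{Cl}(X)$. The open immersion $j: T^*(G/N) \hookrightarrow X$ has complement of codimension $\geq 2$, since $X$ is normal and the universal property of affinization precludes a prime divisor supported in the complement. Restriction therefore yields $\operatorname{Cl}(X) \cong \operatorname{Pic}(T^*(G/N))$, and since $T^*(G/N) \to G/N$ is an affine bundle this equals $\operatorname{Pic}(G/N)$. Using descent along the $\pi_1(G)$-torsor $\widetilde{G}/\widetilde{N} \to G/N$ together with the classical fact that $\groundfield[\widetilde{G}/\widetilde{N}] = \bigoplus_{\lambda \in \characterlatticeforT^+} V(\lambda)^*$ is a UFD, one obtains $\operatorname{Pic}(G/N) \cong \Hom(\pi_1(G), \mathbb{G}_m)$, a finite group, which is trivial for simply connected $G$. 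This yields both factoriality claims.

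For the terminal singularities statement, the smoothness of $T^*(G/N)$ gives $\operatorname{Sing}(X) \subset X \setminus T^*(G/N)$, so $\operatorname{codim} \operatorname{Sing}(X) \geq 2$; by (ii) it suffices to exclude codimension-two components. My plan is to leverage the $G \times T \times \mathbb{G}_m$-equivariance of $X$ (with the $\mathbb{G}_m$-factor the conical scaling supplied by \cref{Affine Closure of Cotangent Bundle of Basic Affine Space Has Symplectic Singularities}) to decompose the boundary into finitely many $G \times T$-stable strata, and to rule out each codimension-two candidate by either producing an explicit local product structure of the form $\A^2 \times Y$ with $Y$ smooth near the generic point, or by reducing along a parabolic to a proper Levi $L \subsetneq G$ and iterating, with the rank-one base case $\overline{T^*(\SL_2/N)} \cong \A^4$ smooth.

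The main obstacle will be this last step: enumerating the $G \times T$-stable codimension-two boundary strata and ruling out a Du Val transverse slice on each. I expect to handle it by a careful study of the combined moment map $(\momentMapFromAFFINECLOSUREofCotangentSpaceWithGROUPG, \momentMapFromAFFINECLOSUREofCotangentSpaceWithGROUPT): X \to \LGd \times \LTd$, using the $W$-equivariance of the $\LTd$-factor to constrain which Du Val types can appear, and possibly via an auxiliary partial resolution of $X$ whose exceptional divisors are classifiable by their $G \times T$-equivariant structure.
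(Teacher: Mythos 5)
Your treatment of the $\mathbb{Q}$-factorial and UFD statements is essentially sound and tracks the paper's route: both pass through $\operatorname{Cl}(X) \cong \operatorname{Pic}(T^*(G/N)) \cong \operatorname{Pic}(G/N)$, using that the boundary has codimension $\geq 2$ (the paper isolates this in \cref{Complement to QuasiAffine Noetherian Scheme Must Be of Codimension At Least Two}, via the Moret-Bailly criterion for quasi-affine schemes with Noetherian ring of functions) and that the affine-bundle projection induces an isomorphism of Picard groups (the paper proves surjectivity via EGA IV.21.4.11 rather than leaving it as ``affine bundle''). You compute $\operatorname{Pic}(G/N)$ by descent along the $\pi_1(G)$-torsor $\widetilde{G}/\widetilde{N} \to G/N$, whereas the paper uses the exact sequence $X^{\bullet}(N) \to \operatorname{Pic}(G/N) \to \operatorname{Pic}(G) \to \operatorname{Pic}(N)$ together with finiteness and (for simply connected $G$) vanishing of $\operatorname{Pic}(G)$. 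Both are valid; the paper's is perhaps more self-contained from the cited references.

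The genuine gap is in the terminal singularities claim, where your text is a plan rather than a proof. You correctly reduce, via Namikawa, to showing $\operatorname{codim}\operatorname{Sing}(X) \geq 4$, and this is indeed the hard content of the theorem. But the strategy you sketch — stratify the boundary into $G\times T$-stable pieces and exclude codimension-two strata either by exhibiting an explicit $\A^2 \times Y$ product structure, by Levi induction, or by classifying Du Val transverse types via the moment map — does not come close to a completed argument, and it is not the route the paper takes. The paper never analyses transverse slice types; instead it proves the stronger statement that the locus where the right $T$-action has stabilizer of dimension $\geq 2$ has codimension $\geq 4$ (\cref{Locus of Points of affineClosureOfCotangentBundleofBasicAffineSpace whose T-stabilizer has dimension geq 2 has codimension at least four}), and separately shows that any point with $T$-stabilizer of dimension $\leq 1$ is smooth because some $W$-translate of it projects to $G/N$ or $G/[P_\alpha,P_\alpha]$, both contained in the smooth locus of $\affineClosureofBasicAffineSpace$ (\cref{T Stabilizer Having Dimension leq 1 Implies Smooth Point}). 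The codimension bound on the high-stabilizer locus is itself where the UFD property of $R$ (hence the simply connected hypothesis, transferred afterward to general $G$) enters in an essential way, through a four-step dimension drop built from irreducibility of the elements $w(z)$ (\cref{The GK Generators of ringOfFunctionsForCOTANGENTBUNDLEOfBasicAffineSpace Are Irreducible}) and two GIT quotients by rank-one subtori of the stabilizer (\cref{If You're a Graded Integral Finitely Generated Domain with a Nonzero Graded Element and You Take the GIT Quotient by Gm You Lose a Dimension}, \cref{For Scheme with Gm Action Fixed Points Are Closed Subscheme of GIT Quotient}). You would need to either reproduce this (or an equivalent) argument or produce the case analysis you gesture at; as written, the crucial codimension estimate is asserted, not proved.
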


We refer to \cite{JiaTheGeometryOfTheAffineClosureOfCotangentBundleOfBasicAffineSpaceForSLn}, \cite{WangANewWeylGroupActionRelatedTotheQuasiClassicalGelfandGraevAction}, and \cite{GinzburgKazhdanDifferentialOperatorsOnBasicAffineSpaceandtheGelfandGraevAction} for some motivation for \cref{Affine Closure of Cotangent Bundle of Basic Affine Space Has Symplectic Singularities} and for studying $\affineClosureOfCotangentBundleofBasicAffineSpace$ more generally, and to \cite{DancerHanayKirwanSymplecticDualityandImplosions}, \cite{BradenProudfootWebsterQuantizationsofConicalSymplecticResolutionsILocalandGlobalStructure}, \cite{KamnitzerSymplecticResolutionsSymplecticDualityandCoulombBranches}, \cite{BourgetDancerGrimmingerHanayZhongPartialImplosionsAndQuivers}, \cite{BradenLicataProudfootWebsterQuantizationsofConicalSymplecticResolutionsIICategoryOandSymplecticDuality}, \cite{BourgetDancerGrimmingerHanayKirwanZhongOrthosymplecticImplosions} for some motivation for symplectic duality and the relevance of $\affineClosureOfCotangentBundleofBasicAffineSpace$ to the mirror symmetry program. 

    \subsection{Outline of Proof} In  \cite{JiaTheGeometryOfTheAffineClosureOfCotangentBundleOfBasicAffineSpaceForSLn}, it is shown that $\affineClosureOfCotangentBundleofBasicAffineSpace$ has symplectic singularities when $G = \SL_n$. We briefly review the approach of \cite{JiaTheGeometryOfTheAffineClosureOfCotangentBundleOfBasicAffineSpaceForSLn} so as to compare and contrast with the approach taken in the general case here. When $G = \SL_n$, the variety $\affineClosureOfCotangentBundleofBasicAffineSpace$ admits a description as a hyperk\"ahler reduction of a certain vector space obtained from a quiver \cite{DancerKirawanSwannImplosionForHyperKahlerManifolds}. This quiver description gives a stratification of $\affineClosureOfCotangentBundleofBasicAffineSpace$ by hyperk\"ahler varieties of even (complex) codimension, and this stratification in particular includes a dense open smooth subset denoted in \cite{DancerKirawanSwannImplosionForHyperKahlerManifolds} as $Q^{\text{hks}}$. A key insight in \cite{JiaTheGeometryOfTheAffineClosureOfCotangentBundleOfBasicAffineSpaceForSLn} is that complement of $Q^{\text{hks}}$ in  $\affineClosureOfCotangentBundleofBasicAffineSpace$ in fact has codimension \textit{four}, which is proved by showing all other strata have codimension at least four. Results of \cite{StratenSteenbrinkExtendabilityofHolomorphicDifferentialFormsNearIsolatedHypersurfaceSingularities},  \cite{FlennerExtendabilityofDifferentialFormsonNonisolatedSingularities} then show that, to prove that $\affineClosureOfCotangentBundleofBasicAffineSpace$ has symplectic singularities, it suffices to show that $\affineClosureOfCotangentBundleofBasicAffineSpace$ is normal and that the smooth locus of $\affineClosureOfCotangentBundleofBasicAffineSpace$ admits a symplectic form, see \cref{Normal Variety with Nondeg Form on Enough of Regular Locus with High Dimensional Singular Locus Has Symplectic Singularities}.{\let\thefootnote\relax\footnotetext{\textit{Mathematics Subject Classification 2020:} 14B05, 22E57, 22E10}} These facts are proved in \cite{JiaTheGeometryOfTheAffineClosureOfCotangentBundleOfBasicAffineSpaceForSLn} using the above stratification and the Marsden-Weinstein theorem. 

   A quiver type description of $\affineClosureOfCotangentBundleofBasicAffineSpace$ is not expected to exist for general $G$, even when $G = \text{SO}_{2n}$ \cite{DancerHanayKirwanSymplecticDualityandImplosions}. In proving \cref{Affine Closure of Cotangent Bundle of Basic Affine Space Has Symplectic Singularities}, we circumvent this issue by constructing a smooth open subscheme $Q \subseteq \affineClosureOfCotangentBundleofBasicAffineSpace$ that we expect can be identified with $Q^{\text{hks}}$ when $G = \SL_n$. We show that the complement of $Q$ in $\affineClosureOfCotangentBundleofBasicAffineSpace$ has codimension at least four (\cref{Complement of Locus of Points on Which T Acts Freely for ARBITRARY REDUCTIVE GROUP Is at least Codimension Four}), and argue directly that $\affineClosureOfCotangentBundleofBasicAffineSpace$ is normal and that the regular locus of $\affineClosureOfCotangentBundleofBasicAffineSpace$ admits a symplectic form. Using this, we show that $\affineClosureOfCotangentBundleofBasicAffineSpace$ has symplectic singularities in \cref{Symplectic Singularities of Normal Varieties With Small Singular Locus Subsection}. 
   
To show that the complement of $Q$ has codimension at least four, we analyze the right $T$-action on $\affineClosureOfCotangentBundleofBasicAffineSpace$ and use the fact that the ring of functions on $\affineClosureOfCotangentBundleofBasicAffineSpace$ is a unique factorization domain when $G$ is simply connected, see \cref{The Ring ringOfFunctionsForCOTANGENTBUNDLEOfBasicAffineSpace is a UFD}. In this case, the fact that the singular locus of $\affineClosureofBasicAffineSpace$ has codimension at least four allows us to show that a set strictly smaller than $Q$ (in general) has codimension at least four, see \cref{Locus of Points of affineClosureOfCotangentBundleofBasicAffineSpace whose T-stabilizer has dimension geq 2 has codimension at least four}. This of course gives the fact that the complement of $Q$ has codimension at least four for simply connected $G$, which we use to derive the corresponding fact for general $G$ in \cref{Free Locus from Simply Connected Case Subsection}. The fact that $\affineClosureOfCotangentBundleofBasicAffineSpace$ has terminal singularities then follows immediately from a result of Namikawa \cite{NamikawaANoteOnSymplecticSingularities}.

    \subsection{Acknowledgments} I would like to thank Desmond Coles, Gurbir Dhillon, Louis Esser, Joakim F\ae rgeman, Victor Ginzburg, Rok Gregoric, Boming Jia, Frances Kirwan, Joaqu\'in Moraga, Morgan Opie, Alberto San Miguel Malaney, Kendric Schefers, and Jackson Van Dyke for many interesting and useful conversations. I would also like to thank Boming Jia and Sam Raskin for pointing out a gap in a draft version of this paper, as well as the anonymous referee, whose extremely detailed feedback greatly improved the exposition of this paper. Finally, I would also like to thank Ivan Losev and the organizers of the \textit{Quantized symplectic singularities and applications to Lie theory} 2022 conference, who provided an excellent environment for me to learn the basics of the theory of symplectic singularities.
\section{Recollections on The Affine Closure of The Basic Affine Space}\label{Recollections on The Affine Closure of The Basic Affine Space Section} We now collect the facts we will use regarding $\affineClosureofBasicAffineSpace$ and $\affineClosureOfCotangentBundleofBasicAffineSpace$ and set the notation which will be used in what follows. None of the material in \cref{Recollections on The Affine Closure of The Basic Affine Space Section} is original. More details, references, and proofs can be found in works such as \cite{BezrukavnikovBravermanPositselskiiGluingofAbelianCategoriesandDifferentialOperatorsontheBasicAffineSpace}, \cite{GinzburgRicheDifferentialOperatorsOnBasicAffineSpaceandtheAffineGrassmannian}, \cite{BraKazSchwartz}, \cite{GinzburgKazhdanDifferentialOperatorsOnBasicAffineSpaceandtheGelfandGraevAction}, and \cite{LevasseurStaffordDifferentialOperatorsandCohomologyGroupsontheBasicAffineSpace}. 

\subsection{Affine Closures of Basic Affine Space and Its Cotangent Bundle} \label{Affine Closures of Basic Affine Space and Its Cotangent Bundle Subsection}Hereafter, in every section except\footnote{In \cref{Proof of Main Theorem for Simply Connected G Section}, we will restrict to the case where $G_{\mathbb{Z}}$ is simply connected.} \cref{Proof of Main Theorem for Simply Connected G Section}, we let $G_{\mathbb{Z}}$ denote some split reductive group over $\mathbb{Z}$ with choice of maximal torus $T_{\mathbb{Z}}$, and let $G := G_{k}$ and $T := T_k$ denote the respective base changes to $k := \mathbb{C}$. Denote by $\characterlatticeforT$ denote the lattice of characters for $T$, and let $X_{\bullet}(T)$ denote the lattice of cocharacters. Choose some Borel subgroup $B \supseteq T$, and let $N$ denote the unipotent radical of $B$. Let $\LG$ and $\LT$ denote the Lie algebras of $G$ and $T$ respectively, and let $\LGd$ and $\LTd$ denote their respective dual Lie algebras. We will occasionally abuse notation by denoting $\LT(\mathbb{Q}) := \text{Lie}(T_{\mathbb{Q}})(\mathbb{Q})$ and $\LTd(\mathbb{Q}) := \text{Lie}(T_{\mathbb{Q}})^*(\mathbb{Q})$. With this notation, we have isomorphisms \begin{equation}\label{characterlatticeforT tensors to give Lie Algebra}
            \characterlatticeforT \otimes_{\mathbb{Z}} \mathbb{Q} \xrightarrow{\sim} \LTd(\mathbb{Q})
        \end{equation} and \begin{equation}\label{cocharacterlatticeforT tensors to give Lie Algebra}
            X_{\bullet}(T) \otimes_{\mathbb{Z}} \mathbb{Q} \xrightarrow{\sim} \LT(\mathbb{Q})
        \end{equation} both induced by the differential. 

By the algebraic Peter-Weyl theorem the ring of functions on $G/N$ \[\ringOfFunctionsForBasicAffineSpace := \Gamma(G/N, \mathcal{O}_{G/N})\] admits a grading by $\characterlatticeforT$ such that $\mathbb{C} \xrightarrow{\sim} A_0$ and $A_{\lambda} \neq 0$ only if $\lambda$ is dominant. Moreover, it is standard (see, for example, \cite[Proposition 14.26]{MilneAlgebraicGroupsBook}) that the variety $G/N$ is quasi-affine; we also reprove this fact below as an instance of a more general claim in \cref{G Mod Commutator of Parabolic Is Qaff With Stratification}. Therefore $G/N$ is an open subscheme of its \textit{affine closure} $\affineClosureofBasicAffineSpace := \Spec(A)$. 
    
    The projection map $\pi: T^*(G/N) \to G/N$ is affine, and so the variety $T^*(G/N)$ is also quasi-affine. In particular, we may identify it as an open subscheme of its affinization $\affineClosureOfCotangentBundleofBasicAffineSpace = \Spec(\ringOfFunctionsForCOTANGENTBUNDLEOfBasicAffineSpace)$, where \[\ringOfFunctionsForCOTANGENTBUNDLEOfBasicAffineSpace := \Gamma(T^*(G/N), \mathcal{O}_{T^*(G/N)})\] denotes the ring of global functions on $T^*(G/N)$. The map $\pi$ induces a canonical map \[\projectionFromAffineClosureofCotangentBundleToAffineClosureofSpace: \affineClosureOfCotangentBundleofBasicAffineSpace \to \affineClosureofBasicAffineSpace\] induced by the inclusion $\ringOfFunctionsForBasicAffineSpace \subseteq \ringOfFunctionsForCOTANGENTBUNDLEOfBasicAffineSpace$. The action of $G \times T$ on $G/N$ gives a corresponding $G \times T$ action on $\affineClosureOfCotangentBundleofBasicAffineSpace$. We refer to the action of the subgroup $1 \times T$ as \lq the\rq{} $T$-action on $\affineClosureOfCotangentBundleofBasicAffineSpace$ and, for any $\lambda \in \characterlatticeforT$, we let $\ringOfFunctionsForCOTANGENTBUNDLEOfBasicAffineSpace_{\lambda}$ denote the graded summand of $\ringOfFunctionsForCOTANGENTBUNDLEOfBasicAffineSpace$ induced from this $T$-action. 

\subsection{Moment Map Notation}
From the $G \times T$-action on $G/N$, we obtain a moment map $T^*(G/N) \to \LG^* \times \LTd$ which lifts to a map $T^*(G/N) \to \LG^* \times_{\LTd\sslash W} \LTd$. Since $\LG^* \times_{\LTd\sslash W} \LTd$ is affine, we obtain an induced map \[\momentMapFromAFFINECLOSUREofCotangentSpacewithTOTALGROUP: \affineClosureOfCotangentBundleofBasicAffineSpace \to \LG^* \times_{\LTd\sslash W} \LTd\] by the universal property of affinization. 

\subsection{Algebraic Gelfand-Graev Action on $\affineClosureOfCotangentBundleofBasicAffineSpace$}
    We recall a weaker form of one of the main theorems of \cite{GinzburgKazhdanDifferentialOperatorsOnBasicAffineSpaceandtheGelfandGraevAction}, see also \cite[Section 5.5]{GinzburgRicheDifferentialOperatorsOnBasicAffineSpaceandtheAffineGrassmannian}:

\begin{Theorem}\label{Gelfand-Graev Action on Affine Closure of Cotangent Bundle} \cite[Section 1.3]{GinzburgKazhdanDifferentialOperatorsOnBasicAffineSpaceandtheGelfandGraevAction}
    There is a canonical $G, T \rtimes W$ action on $\affineClosureOfCotangentBundleofBasicAffineSpace$ lifting the $G \times T$ action such that the map $\momentMapFromAFFINECLOSUREofCotangentSpacewithTOTALGROUP$ is $W$-equivariant.
\end{Theorem}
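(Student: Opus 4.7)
This theorem is originally due to \cite{GinzburgKazhdanDifferentialOperatorsOnBasicAffineSpaceandtheGelfandGraevAction}; the strategy I would adopt is to produce the action of each simple reflection by a rank-one reduction and then extend from an open dense locus using normality of $\affineClosureOfCotangentBundleofBasicAffineSpace$. For each simple root $\alpha_i$, let $L_i$ be the Levi of the minimal non-Borel standard parabolic $P_i \supset B$, and set $N_i := L_i \cap N$. Since $[L_i, L_i]$ is isogenous to $\SL_2$, the variety $\overline{T^*(L_i/N_i)}$ admits an explicit presentation (for $\SL_2$ itself one gets $\mathbb{A}^4$ directly from $\SL_2/N \cong \mathbb{A}^2 \setminus 0$), and one checks by hand that it carries a $\mathbb{Z}/2 \cong W(L_i)$-action commuting with $L_i$, extending the $T$-action to $T \rtimes \mathbb{Z}/2$, and making the moment map equivariant.

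To globalize, I would use the decomposition $G/N = G \times^{P_i}(L_i/N_i)$ to transport the rank-one action to a birational automorphism $\sigma_i$ of $\affineClosureOfCotangentBundleofBasicAffineSpace$, initially defined on the open locus where the $T$-moment map lands in the $\alpha_i$-regular part of $\LTd$. Because $\affineClosureOfCotangentBundleofBasicAffineSpace$ will be shown to be normal in the course of this paper and the complement of this open locus has codimension at least two (in fact at least four, by \cref{Complement of Locus of Points on Which T Acts Freely for ARBITRARY REDUCTIVE GROUP Is at least Codimension Four}), Hartogs' principle extends $\sigma_i$ to a regular automorphism of all of $\affineClosureOfCotangentBundleofBasicAffineSpace$. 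The braid relations among the $\sigma_i$ then reduce, by the same extension argument applied to the standard Levi of each rank-two root subsystem, to the cases $A_1 \times A_1$, $A_2$, $B_2$, and $G_2$, in which the relations are verified by finite computation.

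The $W$-equivariance of $\momentMapFromAFFINECLOSUREofCotangentSpacewithTOTALGROUP$ is then immediate: each $\sigma_i$ acts as the standard simple reflection on the $\LTd$-factor by construction, and commutes with the left $G$-action, hence preserves the $\LGd$-factor of the moment map. The main obstacle is the construction and globalization of $\sigma_i$ in the second step: affinization does not commute with the fiber bundle $G \times^{P_i}(L_i/N_i)$, so the transfer of the rank-one action cannot be performed naively along fibers but must be carried out at the level of graded rings of functions, descending the $\SL_2$-action through $P_i$-invariants. This descent is essentially the content of the quasi-classical Gelfand--Graev construction of \cite{GinzburgKazhdanDifferentialOperatorsOnBasicAffineSpaceandtheGelfandGraevAction}.
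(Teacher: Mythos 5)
The paper does not prove this theorem; it is stated as a recollection and attributed to \cite[Section 1.3]{GinzburgKazhdanDifferentialOperatorsOnBasicAffineSpaceandtheGelfandGraevAction}. So there is no in-paper argument to compare against. That said, your sketch contains a genuine gap which is worth pinpointing, because the correct mechanism is the one the paper later recalls in the discussion surrounding $V_\alpha$ and the symplectic Fourier transform.

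The problem is with the open locus you use as the domain of the birational automorphism $\sigma_i$. You propose to define $\sigma_i$ where the $T$-moment map is $\alpha_i$-regular, and then extend by Hartogs using codimension $\geq 2$ (and you cite \cref{Complement of Locus of Points on Which T Acts Freely for ARBITRARY REDUCTIVE GROUP Is at least Codimension Four} for this, which is circular --- that result is proved downstream of the $W$-action and cannot be used to build it). But worse, the codimension claim is simply false for this locus: the complement is the preimage of a hyperplane in $\LTd$ under the $T$-moment map, hence generically a divisor. One can see this already for $G = \SL_2$, where $\affineClosureOfCotangentBundleofBasicAffineSpace \cong \mathbb{A}^4$ with coordinates $(x,y,p,q)$ and the $T$-moment map is $xp + yq$; its vanishing locus is a hypersurface. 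Hartogs therefore does not extend $\sigma_i$ from this locus.

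The correct open locus --- and the one that actually carries the action globally --- is the cotangent bundle $T^*(V_\alpha)$ of the Kazhdan--Laumon rank-two symplectic vector bundle $V_\alpha \to G/[P_\alpha, P_\alpha]$. The complement of $T^*(G/N)$ in $T^*(V_\alpha)$ is the restriction of the cotangent bundle to the zero section, which has codimension two because $V_\alpha$ has rank two; hence $\mathcal{O}(T^*(V_\alpha)) \cong R$ and the fiberwise symplectic Fourier transform $S_\alpha$ on $T^*(V_\alpha)$ directly induces the automorphism $s_\alpha$ of $R$, with no extension step needed. This is what the paper recalls in the discussion of $V_\alpha$ in the subsection on the symplectic vector bundle, and it is also the content of \cite[Section 5.5]{GinzburgRicheDifferentialOperatorsOnBasicAffineSpaceandtheAffineGrassmannian}. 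Your rank-one-reduction intuition and the verification of braid relations via rank-two subsystems are sound ideas, but the domain of definition must be $T^*(V_\alpha)$, not a moment-map-regular locus, to avoid the codimension-one obstruction.
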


\subsection{Finite Generation of Functions on $T^*(G/N)$} We now summarize some results of \cite[Section 3.6]{GinzburgRicheDifferentialOperatorsOnBasicAffineSpaceandtheAffineGrassmannian} which will be used below. For any $w \in W$, let $\projectionFromAffineClosureofCotangentBundleToAffineClosureofSpace_w$ denote the composite map $\projectionFromAffineClosureofCotangentBundleToAffineClosureofSpace \circ w: \affineClosureOfCotangentBundleofBasicAffineSpace \to \affineClosureofBasicAffineSpace$. 

\begin{Lemma}\label{affineClosureOfCotangentBundleofBasicAffineSpace is a Closed Subscheme of |W| Many Products of G/N times LGd timesLTdsslash W LTD and Multiplication Map Is Surjective on Graded Pieces}
The ring $\ringOfFunctionsForCOTANGENTBUNDLEOfBasicAffineSpace$ is finitely generated, and in particular Noetherian. Moreover, the morphism \[\affineClosureOfCotangentBundleofBasicAffineSpace \xrightarrow{\momentMapFromAFFINECLOSUREofCotangentSpacewithTOTALGROUP \times \bigtimes_w \projectionFromAffineClosureofCotangentBundleToAffineClosureofSpace_w}\LGd \times_{\LTd\sslash W} \LTd \times \bigtimes_{w \in W} \affineClosureofBasicAffineSpace\] is a $G \times (T \rtimes W)$-equivariant closed embedding, and, for any dominant $\lambda \in \characterlatticeforT$ and any $w \in W$, the restricted multiplication map \[M_{w, \lambda}: \Sym(\LG) \otimes_{Z\LG} \Sym(\LT) \otimes_{\groundfield} w(\ringOfFunctionsForBasicAffineSpace_{\lambda}) \to \ringOfFunctionsForCOTANGENTBUNDLEOfBasicAffineSpace_{w\lambda}\] is surjective, where $w(\ringOfFunctionsForBasicAffineSpace_{\lambda})$ denotes the image of $\ringOfFunctionsForBasicAffineSpace_{\lambda}$ in $\ringOfFunctionsForCOTANGENTBUNDLEOfBasicAffineSpace$ under $w$. 
\end{Lemma}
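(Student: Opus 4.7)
The plan is to prove the surjectivity of the multiplication maps $M_{w,\lambda}$ first; the closed embedding then follows formally, and finite generation together with Noetherianity drops out because the ambient scheme is of finite type. The starting point is the identification $T^*(G/N) \cong G \times^N (\LG/\LN)^*$, which yields
\[\ringOfFunctionsForCOTANGENTBUNDLEOfBasicAffineSpace \cong (\mathcal{O}(G) \otimes \Sym(\LG/\LN))^N,\]
compatibly with the right $T$-grading and the natural order grading $R = \bigoplus_n R^n$ induced by $\Sym(\LG/\LN) = \bigoplus_n \Sym^n(\LG/\LN)$. Under this identification $A = R^0$, and the images of $\Sym(\LG)$ and $\Sym(\LT)$ under $\momentMapFromAFFINECLOSUREofCotangentSpacewithTOTALGROUP$ lie in $R_0$ because the right $T$-action commutes with the left-$G$-moment map and because $T$ acts trivially on $\LTd$.

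First I would reduce the surjectivity of $M_{w,\lambda}$ to the case $w = e$ using \cref{Gelfand-Graev Action on Affine Closure of Cotangent Bundle}: the Gelfand--Graev action carries $R_\lambda$ to $R_{w\lambda}$ and $A_\lambda$ to $w(A_\lambda)$ by definition, and by the $W$-equivariance of $\momentMapFromAFFINECLOSUREofCotangentSpacewithTOTALGROUP$ it preserves the subring $\Sym(\LG) \otimes_{Z\LG} \Sym(\LT) \subseteq R$ (acting through the usual $W$-action on $\Sym(\LT)$). Hence surjectivity of $M_{e,\lambda}$ for dominant $\lambda$ implies surjectivity of $M_{w,\lambda}$ for every $w \in W$.

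For $w = e$ and $\lambda$ dominant, I would pass to the associated graded for the order filtration. Peter--Weyl identifies $\mathrm{gr}(R)_\lambda$ as an explicit $N$-invariant summand of $A_\lambda \otimes \Sym(\LG/\LN)$, and the claim that $\Sym(\LG) \cdot A_\lambda$ surjects onto this associated graded piece reduces to the standard highest-weight fact that $\Sym(\LG)$ acting on the highest-weight line of the irreducible $G$-representation of highest weight $\lambda$ generates, after tensoring with $\Sym(\LG/\LN)$, all $N$-invariants. A filtered lifting argument then upgrades this to surjectivity of $\Sym(\LG) \cdot A_\lambda \to R_\lambda$. Adjoining $\Sym(\LT)$ uses that $\Sym(\LG) \otimes_{Z\LG} \Sym(\LT)$ is precisely the ring of functions on $\LGd \times_{\LTd\sslash W} \LTd$, so its image in $R$ is captured by $\momentMapFromAFFINECLOSUREofCotangentSpacewithTOTALGROUP^*$, while freeness of $\Sym(\LT)$ over $\Sym(\LT)^W$ (Chevalley) ensures no relations are missed.

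The closed embedding is then immediate: surjectivity for every pair $(w,\lambda)$ says exactly that the associated ring map from $\mathcal{O}$ of the ambient space to $R$ hits every bigraded piece. Finite generation of $R$ then follows because the ambient product is of finite type---$\affineClosureofBasicAffineSpace = \Spec(A)$ is finite type since $A$ is generated by matrix coefficients of a finite collection of fundamental representations of $G$---and Noetherianity is Hilbert's basis theorem. The main obstacle is the $w = e$ surjectivity, and specifically the interaction of the $\Sym(\LT)$-factor with the order filtration argument; this is essentially the Harish-Chandra--type compatibility encoded in the factorization of $\momentMapFromAFFINECLOSUREofCotangentSpacewithTOTALGROUP$ through the fiber product $\LGd \times_{\LTd\sslash W} \LTd$ rather than through $\LGd \times \LTd$.
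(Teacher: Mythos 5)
This lemma is not proved in the paper at all: it appears in \cref{Recollections on The Affine Closure of The Basic Affine Space Section}, which is explicitly introduced as a collection of recalled facts, and the lemma itself is stated as a summary of results cited to Section~3.6 of Ginzburg--Riche. So there is no in-paper proof to compare against; you are attempting to reprove a black-boxed external result.

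Your outline of the formal reductions is fine: the reduction of $M_{w,\lambda}$ to $M_{e,\lambda}$ using the $T\rtimes W$-structure of the Gelfand--Graev action (\cref{Gelfand-Graev Action on Affine Closure of Cotangent Bundle}) is correct, and the deduction of the closed embedding and of finite generation from the surjectivity of the multiplication maps is also correct. The genuine gap is in the asserted surjectivity of $M_{e,\lambda}$ itself. You claim that $\mathrm{gr}(R)_\lambda$ is ``an explicit $N$-invariant summand of $A_\lambda \otimes \Sym(\LG/\LN)$'' and that the surjectivity then reduces to a ``standard highest-weight fact.'' Neither assertion is right as stated. Since $T^*(G/N) \cong G\times^N(\LG/\LN)^*$ is an associated bundle and not a product, $R_\lambda = \bigl(\mathcal{O}(G) \otimes \Sym(\LG/\LN)\bigr)^N_\lambda$ decomposes via Peter--Weyl as $\bigoplus_\mu V_\mu^* \otimes (V_\mu\otimes\Sym(\LG/\LN))^N_\lambda$, with contributions from \emph{all} dominant $\mu$, not just $\mu = \lambda$; this is not a summand of $A_\lambda\otimes\Sym(\LG/\LN)$ and the $N$-invariance condition couples the $V_\mu$ factor to $\Sym(\LG/\LN)$ nontrivially. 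There is also no preexisting ``standard highest-weight fact'' that $\Sym(\LG)$ applied to a highest-weight line generates, after tensoring with $\Sym(\LG/\LN)$, all $N$-invariants --- $\Sym(\LG)$ is commutative and does not even act on $V_\lambda$, and the statement you would actually need (that $\mu_G^*\Sym(\LG)\cdot\mu_T^*\Sym(\LT)\cdot A_\lambda$ spans each of these Peter--Weyl blocks) is precisely the content of the nontrivial analysis in Ginzburg--Riche, which runs through properties of the map to $\LGd\times_{\LTd\sslash W}\LTd$, the Kostant section, and normality arguments, not through a lifting along the order filtration. Finally, since $R$ is already graded by the fiber-scaling $\mathbb{G}_m$-action, the ``order filtration'' is already split, so the ``filtered lifting argument'' you gesture at buys nothing: passing to associated graded is the identity, and the difficulty lives entirely at the graded level.

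Concretely: to make this plan work you would need a precise statement and proof of the generation of $(V_\mu\otimes\Sym(\LG/\LN))^N_\lambda$ over the image of $\Sym(\LG)\otimes_{Z\LG}\Sym(\LT)$ by the $\mu = \lambda$ block, and this is a genuine theorem, not a standard fact. Absent that, the correct move here --- and what the paper does --- is simply to cite Ginzburg--Riche, Section~3.6.
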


\section{Ring of Functions on $T^*(G/N)$} 
In this section, we study the variety $\affineClosureOfCotangentBundleofBasicAffineSpace$ and its ring of functions $\ringOfFunctionsForCOTANGENTBUNDLEOfBasicAffineSpace$. We first show that the variety $\affineClosureOfCotangentBundleofBasicAffineSpace$ is $\mathbb{Q}$-factorial, and moreover that $\ringOfFunctionsForCOTANGENTBUNDLEOfBasicAffineSpace$ is a UFD when $G$ is simply connected, in \cref{Factoriality and Q-Factoriality of affineClosureOfCotangentBundleofBasicAffineSpace Subsection}. We then construct a $\mathbb{G}_m$-action on $\affineClosureOfCotangentBundleofBasicAffineSpace$ in \cref{Grading on Functions on T*(G/N) Subsection} and record some of its basic properties. 

\subsection{Factoriality and $\mathbb{Q}$-Factoriality of Affine Closure of $T^*(G/N)$}\label{Factoriality and Q-Factoriality of affineClosureOfCotangentBundleofBasicAffineSpace Subsection}
Recall that a normal variety is said to be $\mathbb{Q}$\textit{-factorial} if the cokernel of the map $\text{Pic}(X) \xhookrightarrow{} \text{Cl}(X)$ is torsion. We now show:

\begin{Proposition}\label{affineClosureOfCotangentBundleofBasicAffineSpace is normal and Q-factorial for all G and is factorial when G is simply connected}
    The variety $\affineClosureOfCotangentBundleofBasicAffineSpace$ is $\mathbb{Q}$-factorial and, in particular, normal. 
\end{Proposition}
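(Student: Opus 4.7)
The plan is to prove normality directly from the smoothness of $T^*(G/N)$ together with the finite generation of its ring of functions, and then deduce $\mathbb{Q}$-factoriality by descending along a finite central isogeny from a case where the ring of functions is known to be a UFD (namely the statement of \cref{The Ring ringOfFunctionsForCOTANGENTBUNDLEOfBasicAffineSpace is a UFD}, proved separately).

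For normality, note that $T^*(G/N)$ is smooth, as the cotangent bundle of the smooth variety $G/N$, and hence is normal. By \cref{affineClosureOfCotangentBundleofBasicAffineSpace is a Closed Subscheme of |W| Many Products of G/N times LGd timesLTdsslash W LTD and Multiplication Map Is Surjective on Graded Pieces}, $\ringOfFunctionsForCOTANGENTBUNDLEOfBasicAffineSpace$ is finitely generated, so $\affineClosureOfCotangentBundleofBasicAffineSpace$ is a Noetherian affine scheme of which $T^*(G/N)$ is a dense open subscheme sharing the same function field. Any element of $\text{Frac}(\ringOfFunctionsForCOTANGENTBUNDLEOfBasicAffineSpace)$ integral over $\ringOfFunctionsForCOTANGENTBUNDLEOfBasicAffineSpace$ defines a rational function on $T^*(G/N)$ integral over every local ring, and so by normality of $T^*(G/N)$ extends to a regular function lying in $\ringOfFunctionsForCOTANGENTBUNDLEOfBasicAffineSpace$. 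Thus $\ringOfFunctionsForCOTANGENTBUNDLEOfBasicAffineSpace$ is integrally closed.

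For $\mathbb{Q}$-factoriality, choose a finite central isogeny $q \colon \tilde{G} \to G$ with $\tilde{G}$ a product of a torus and a simply connected semisimple group (e.g., $\tilde{G} := Z(G)^\circ \times \widetilde{[G,G]}$), and let $F := \ker q$, a finite central subgroup. Since $F$ is central and $\tilde{N} := q^{-1}(N)$ is unipotent, $F \cap \tilde{N} = 1$, so $\tilde{G}/\tilde{N} \to G/N$ is étale with Galois group $F$; this lifts to an étale $F$-cover on cotangent bundles, and passing to affinizations yields a finite quotient map $p \colon \overline{T^*(\tilde{G}/\tilde{N})} \to \affineClosureOfCotangentBundleofBasicAffineSpace$ identifying $\affineClosureOfCotangentBundleofBasicAffineSpace \cong \overline{T^*(\tilde{G}/\tilde{N})}/F$. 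By \cref{The Ring ringOfFunctionsForCOTANGENTBUNDLEOfBasicAffineSpace is a UFD} applied to the simply connected semisimple factor, together with the factoriality of the torus-times-affine-space factor (whose coordinate ring is a Laurent polynomial ring), $\overline{T^*(\tilde{G}/\tilde{N})}$ is factorial. For any Weil divisor $D$ on $\affineClosureOfCotangentBundleofBasicAffineSpace$, its pullback $p^*D$ is then principal, say $p^*D = \text{div}(g)$, and so $|F| \cdot D = p_*(p^*D) = p_*(\text{div}(g)) = \text{div}\!\left(\prod_{\sigma \in F} \sigma(g)\right)$ is principal, hence Cartier, proving $\mathbb{Q}$-factoriality.

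The main obstacle is the UFD statement for the simply connected case, \cref{The Ring ringOfFunctionsForCOTANGENTBUNDLEOfBasicAffineSpace is a UFD}; once that is in hand, both the isogeny reduction and the finite-quotient descent argument are standard.
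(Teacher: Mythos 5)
Your proof is correct, but it takes a genuinely different route from the paper's. The paper proves $\mathbb{Q}$-factoriality by showing directly that the class group $\text{Cl}(\overline{T^*(G/N)})$ is isomorphic to $\text{Cl}(\overline{G/N})$, hence finite for all reductive $G$ (and trivial when $G$ is simply connected). This is done in one shot via \cref{Complement to QuasiAffine Noetherian Scheme Must Be of Codimension At Least Two} (codimension-two complement of the quasi-affine locus) together with the fact that $\pi^*: \text{Pic}(G/N) \to \text{Pic}(T^*(G/N))$ is an isomorphism. Finiteness of the class group immediately forces the cokernel of $\text{Pic} \to \text{Cl}$ to be torsion, which is $\mathbb{Q}$-factoriality; the UFD statement \cref{The Ring ringOfFunctionsForCOTANGENTBUNDLEOfBasicAffineSpace is a UFD} is then a \emph{downstream corollary} of this class-group computation. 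Your route instead takes the UFD result for simply connected $G$ as a black-box input and then descends along the finite central isogeny $\tilde{G} \to G$ using the norm argument, $|F|\cdot D = p_*p^*D = \text{div}(\prod_{\sigma\in F}\sigma(g))$. That descent is standard and correct (one should note that $\overline{T^*(\tilde{G}/\tilde{N})}$ being factorial follows because its coordinate ring is a localization of a polynomial ring over the UFD of the simply connected semisimple factor), and the identification $\overline{T^*(G/N)} \cong \overline{T^*(\tilde{G}/\tilde{N})}\sslash F$ is exactly what the paper establishes later in \cref{Free Locus from Simply Connected Case Subsection}. So there is no circularity \emph{provided} one reorganizes to prove the class-group / UFD statement for simply connected $G$ first, as you flag. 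The tradeoff: your approach requires both the class-group computation (to get the UFD input) and the additional isogeny-plus-descent step, whereas the paper's class-group argument already works uniformly for all reductive $G$ and gives $\mathbb{Q}$-factoriality with no descent needed. Your normality argument (integral closedness of $R$ via smoothness of $T^*(G/N)$) is the same as the paper's one-line citation of \cite[Lemma 28.7.9]{StacksProject}, just spelled out.
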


Notice that $\affineClosureofBasicAffineSpace$ and $\affineClosureOfCotangentBundleofBasicAffineSpace$ are normal since the ring of functions on any smooth variety is normal, see, for example, \cite[Lemma 28.7.9]{StacksProject}. To prove the remainder of \cref{affineClosureOfCotangentBundleofBasicAffineSpace is normal and Q-factorial for all G and is factorial when G is simply connected}, we first show the following: 

\begin{Lemma}\label{Vanishing of Class and Picard Group for Affine Closure of G/N}
    The class group and the Picard group of $\overline{G/N}$ are finite. Moreover, if $G$ is simply connected, both groups are trivial. 
\end{Lemma}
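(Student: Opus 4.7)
The plan is to reduce both statements to a single computation of $\text{Pic}(G/N)$ and then to exploit the $N$-torsor $G \to G/N$.

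First, I would establish that the complement $\overline{G/N} \setminus G/N$ has codimension at least two in $\overline{G/N}$. This follows from the defining equality $A = \Gamma(G/N, \mathcal{O}_{G/N}) = \Gamma(\overline{G/N}, \mathcal{O}_{\overline{G/N}})$ together with the normality of $\overline{G/N}$: on a normal Noetherian affine scheme $X = \Spec(A)$, an open $U \subseteq X$ satisfies $\Gamma(U, \mathcal{O}) = A$ if and only if $U$ contains every codimension-one point of $X$. Combining this codimension bound with the smoothness of $G/N$ and the normality of $\overline{G/N}$ yields a chain
\[\text{Pic}(\overline{G/N}) \hookrightarrow \text{Cl}(\overline{G/N}) \cong \text{Cl}(G/N) = \text{Pic}(G/N),\]
where the first injection uses normality, the middle isomorphism uses that removing a closed subset of codimension $\geq 2$ from a normal variety does not affect the class group, and the last equality uses smoothness of $G/N$. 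It therefore suffices to show that $\text{Pic}(G/N)$ is finite, and trivial when $G$ is simply connected.

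Next, I would pull back along the quotient map $\pi \colon G \to G/N$, which is a Zariski-locally trivial $N$-torsor. For any principal $H$-bundle with $H$ a connected linear algebraic group satisfying $X^\bullet(H) = 0$, the pullback $\pi^* \colon \text{Pic}(G/N) \to \text{Pic}(G)$ is injective, since its kernel parametrizes $H$-equivariant structures on the trivial line bundle on the total space and these are classified by $X^\bullet(H)$. Since $N$ is unipotent, its character group vanishes, yielding an injection $\text{Pic}(G/N) \hookrightarrow \text{Pic}(G)$.

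The final step is to invoke the classical fact that for a connected reductive group $G$ the Picard group $\text{Pic}(G)$ is finite, and vanishes whenever the derived subgroup of $G$ is simply connected (in particular for $G$ itself simply connected semisimple). Composing with the injection above gives both the finiteness statement in general and the triviality statement in the simply connected case. I do not anticipate serious technical difficulty here beyond locating clean references for the codimension-one extension principle and for the computation of $\text{Pic}(G)$; the rest of the argument is formal.
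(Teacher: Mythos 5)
Your proof is correct and follows essentially the same route as the paper: reduce to $\text{Pic}(G/N)$ via the codimension-$\geq 2$ bound, then compare $\text{Pic}(G/N)$ with $\text{Pic}(G)$ using the $N$-torsor $G \to G/N$ and the vanishing of $X^{\bullet}(N)$, and finally invoke the standard facts about $\text{Pic}(G)$. The one notable variant is your justification of the codimension bound: the paper cites the explicit $G \times T$-orbit stratification of $\overline{G/N}$ established later in the paper, whereas you appeal to the general fact (normality plus $\Gamma(U,\mathcal{O})=A$ forces the complement to miss all height-one primes) which is essentially the content of the paper's separate Moret-Bailly-type lemma; both are valid, and yours has the minor advantage of not requiring the stratification. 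You also extract only injectivity of $\pi^{*}\colon \text{Pic}(G/N) \to \text{Pic}(G)$ rather than an isomorphism via the full exact sequence $X^{\bullet}(N) \to \text{Pic}(G/N) \to \text{Pic}(G) \to \text{Pic}(N)$, which is a weaker intermediate conclusion but still suffices for the stated result.
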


\begin{proof}
    For any normal variety, the Picard group injects into the class group, so it suffices to show the class group of $\overline{G/N}$ is finite and, when $G$ is simply connected, is trivial. The class group of $\overline{G/N}$ agrees with the class group of $G/N$ since its complement has codimension 2 by the stratification \labelcref{Stratification of Affine Closure of Basic Affine Space by Partial Basic Affine Spaces}, see, for example \cite[Proposition II.6.5(b)]{HartshorneAlgebraicGeometry}. Since $G/N$ is smooth, its class group and Picard group agree. However, it is known (see, for example, \cite[Theorem 18.32]{MilneAlgebraicGroupsBook}) that we have an exact sequence \[X^{\bullet}(N) \to \text{Pic}(G/N) \to \text{Pic}(G) \to \text{Pic}(N)\] 

\noindent where $X^{\bullet}(N) := \text{Hom}_{\text{AlgGp}}(N, \mathbb{G}_m)$. Notice that $X^{\bullet}(N) = 0$ and $\text{Pic}(N) = 0$ as $N$ is unipotent by \cite[Corollary 14.18]{MilneAlgebraicGroupsBook} and \cite[Proposition 14.32]{MilneAlgebraicGroupsBook}, respectively. Therefore $\text{Pic}(G/N) \xrightarrow{\sim} \text{Pic}(G)$. Now our claim follows from the fact that $\text{Pic}(G)$ is finite \cite[Corollary 18.23]{MilneAlgebraicGroupsBook} and the fact that if $G$ is simply connected then $\text{Pic}(G)$ is trivial \cite[Corollary 18.24]{MilneAlgebraicGroupsBook}. 
\end{proof}

From \cref{Vanishing of Class and Picard Group for Affine Closure of G/N}, we derive the following result, which in particular completes the proof of \cref{affineClosureOfCotangentBundleofBasicAffineSpace is normal and Q-factorial for all G and is factorial when G is simply connected}:
\begin{Proposition}\label{Class Group of AffineClosureofCotangentBundleofBasicAffineSpace Vanishes}
    The class group of $\affineClosureOfCotangentBundleofBasicAffineSpace$ is isomorphic to the class group of $\affineClosureofBasicAffineSpace$. In particular, the class group of $\affineClosureOfCotangentBundleofBasicAffineSpace$ is finite and, if $G$ is simply connected, is trivial.
\end{Proposition}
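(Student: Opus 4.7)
The plan is to reduce the proposition to \cref{Vanishing of Class and Picard Group for Affine Closure of G/N} by establishing a chain of isomorphisms
\[
\text{Cl}(\affineClosureOfCotangentBundleofBasicAffineSpace) \;\cong\; \text{Cl}(T^*(G/N)) \;\cong\; \text{Pic}(T^*(G/N)) \;\cong\; \text{Pic}(G/N) \;\cong\; \text{Cl}(\affineClosureofBasicAffineSpace).
\]
Three of these isomorphisms are routine. The isomorphism $\text{Cl}(T^*(G/N)) \cong \text{Pic}(T^*(G/N))$ uses that $T^*(G/N)$ is smooth. The isomorphism $\text{Pic}(G/N) \cong \text{Pic}(T^*(G/N))$ comes from pullback along the vector bundle $\pi: T^*(G/N) \to G/N$ via the $\mathbb{A}^1$-homotopy invariance of the Picard group. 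Finally, $\text{Pic}(G/N) = \text{Cl}(G/N) \cong \text{Cl}(\affineClosureofBasicAffineSpace)$ is essentially contained in the proof of \cref{Vanishing of Class and Picard Group for Affine Closure of G/N}, using excision together with the fact that the complement of $G/N$ in $\affineClosureofBasicAffineSpace$ has codimension at least two.

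The remaining, and main, step is the leftmost isomorphism $\text{Cl}(\affineClosureOfCotangentBundleofBasicAffineSpace) \cong \text{Cl}(T^*(G/N))$, which by excision (\cite[Proposition II.6.5(b)]{HartshorneAlgebraicGeometry}) reduces to showing that the closed complement $C := \affineClosureOfCotangentBundleofBasicAffineSpace \setminus T^*(G/N)$ has codimension at least two. To set this up, I would first verify that $T^*(G/N) = \projectionFromAffineClosureofCotangentBundleToAffineClosureofSpace^{-1}(G/N)$: since $\pi$ is affine, $T^*(G/N)$ is the relative $\text{Spec}$ over $G/N$ of $\pi_* \mathcal{O}_{T^*(G/N)}$, and a direct computation of global sections identifies this sheaf with the restriction to $G/N$ of $\projectionFromAffineClosureofCotangentBundleToAffineClosureofSpace_* \mathcal{O}_{\affineClosureOfCotangentBundleofBasicAffineSpace}$. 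This identifies $C$ with $\projectionFromAffineClosureofCotangentBundleToAffineClosureofSpace^{-1}(\affineClosureofBasicAffineSpace \setminus G/N)$.

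Controlling the dimension of this preimage is the principal obstacle. My approach is to use the natural $\mathbb{G}_m$-action on $T^*(G/N)$ scaling cotangent fibers, which extends to $\affineClosureOfCotangentBundleofBasicAffineSpace$ by the universal property of affinization and makes $\projectionFromAffineClosureofCotangentBundleToAffineClosureofSpace$ equivariant with respect to the trivial action on the target. This action is contracting, with fixed locus the zero section $\affineClosureofBasicAffineSpace \hookrightarrow \affineClosureOfCotangentBundleofBasicAffineSpace$, and since $T^*(G/N)$ is $\mathbb{G}_m$-invariant, so is the closed subset $C$. Combining the codimension-two bound for $\affineClosureofBasicAffineSpace \setminus G/N$ with a bound of $\dim G/N$ on the fibers of $\projectionFromAffineClosureofCotangentBundleToAffineClosureofSpace$ (which can be extracted from the closed embedding of \cref{affineClosureOfCotangentBundleofBasicAffineSpace is a Closed Subscheme of |W| Many Products of G/N times LGd timesLTdsslash W LTD and Multiplication Map Is Surjective on Graded Pieces}, since this embedding controls $\affineClosureOfCotangentBundleofBasicAffineSpace$ inside a product whose projection to $\affineClosureofBasicAffineSpace$ has visibly tractable fibers) yields $\dim C \leq 2 \dim G/N - 2$, completing the argument. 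I expect this fiber-dimension estimate over the boundary of $\affineClosureofBasicAffineSpace$ to be the most delicate point, since a priori upper semicontinuity only goes the wrong way, and it is the extra structure provided by the closed embedding above that forces the desired bound.
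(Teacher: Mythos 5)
Your overall chain of isomorphisms is the same as the paper's, and the middle two steps are sound (the paper gets $\text{Pic}(G/N)\xrightarrow{\sim}\text{Pic}(T^*(G/N))$ via an EGA result rather than $\mathbb{A}^1$-invariance, but both are fine). The genuine difference — and the genuine gap — is in the leftmost isomorphism.

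The paper proves that the complement of $T^*(G/N)$ in $\affineClosureOfCotangentBundleofBasicAffineSpace$ has codimension $\geq 2$ by an entirely abstract argument (\cref{Complement to QuasiAffine Noetherian Scheme Must Be of Codimension At Least Two}): for any integral quasi-affine scheme $\arbitraryQuasiAffineScheme$ with Noetherian ring of functions, if a prime $\mathfrak{p}$ in the complement of $\arbitraryQuasiAffineScheme$ inside $\Spec(\mathcal{O}(\arbitraryQuasiAffineScheme))$ had height one, then by Moret-Bailly's theorem the restriction $\mathcal{O}(X)\to\mathcal{O}(X\setminus V(\mathfrak{p}))$ could not be an isomorphism, contradicting the fact that global functions on $\arbitraryQuasiAffineScheme$ are all of $\mathcal{O}(X)$. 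This requires nothing at all about the geometry of $\overline{\pi}$ or its fibers.

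Your approach instead requires an upper bound of $\dim(G/N)$ on the fibers of $\overline{\pi}$ over the boundary $\affineClosureofBasicAffineSpace\setminus G/N$, and this is where the argument breaks. You say this bound "can be extracted from the closed embedding of \cref{affineClosureOfCotangentBundleofBasicAffineSpace is a Closed Subscheme of |W| Many Products of G/N times LGd timesLTdsslash W LTD and Multiplication Map Is Surjective on Graded Pieces}," but that embedding alone does not control fiber dimensions: restricting it over a point $x\in\affineClosureofBasicAffineSpace$ only exhibits $\overline{\pi}^{-1}(x)$ as a closed subscheme of $\LGd\times_{\LTd\sslash W}\LTd\times\bigtimes_{w\neq 1}\affineClosureofBasicAffineSpace$, whose dimension is vastly larger than $\dim(G/N)$. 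Upper semicontinuity only tells you fibers over the boundary are \emph{at least} $\dim(G/N)$, which is the wrong direction. Pinning down that the fibers do not jump appears to require nontrivial input — something like a conicality argument together with identification of the fiber over the cone point via the moment map ideal — none of which you supply. If you want to keep your geometric strategy, you would need to actually prove the fiber bound; otherwise, the Moret-Bailly route the paper takes is both more robust and considerably shorter.
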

We show this after showing the following lemma: 
\newcommand{\arbitraryQuasiAffineScheme}{Y}
\newcommand{\ringOfFunctionsOnArbitraryQuasiAffineScheme}{B}
\newcommand{\specOfRingOfFunctionsOnArbitraryQuasiAffineScheme}{\Spec(\ringOfFunctionsOnArbitraryQuasiAffineScheme)}
\begin{Lemma}\label{Complement to QuasiAffine Noetherian Scheme Must Be of Codimension At Least Two}
Assume $\arbitraryQuasiAffineScheme$ is an integral quasi-affine scheme and let $\ringOfFunctionsOnArbitraryQuasiAffineScheme$ denote its ring of functions so that $j: \arbitraryQuasiAffineScheme \to \specOfRingOfFunctionsOnArbitraryQuasiAffineScheme$ is an open embedding. If $\ringOfFunctionsOnArbitraryQuasiAffineScheme$ is Noetherian, then the complement of $\arbitraryQuasiAffineScheme$ in $\specOfRingOfFunctionsOnArbitraryQuasiAffineScheme$ has codimension at least two. 
\end{Lemma}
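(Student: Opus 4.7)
The plan is to argue by contradiction. Suppose $Z := \Spec(B) \setminus Y$ contains a codimension-one irreducible component, so there exists a height-one prime $\mathfrak{p} \subseteq B$ with $\mathfrak{p} \in Z$. I will produce a contradiction by exhibiting a nonzero element of $\mathfrak{p}$ that is forced to be a unit in $B$.

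First I would pin down the relevant restriction map. By hypothesis $Y$ is quasi-affine with $\Gamma(Y, \mathcal{O}_Y) = B$, so by the universal property of affinization the canonical map $j : Y \to \Spec(B)$ induced by the identity $B \to \Gamma(Y, \mathcal{O}_Y)$ is an open immersion, and under this identification the restriction of global sections $\Gamma(\Spec(B), \mathcal{O}) \to \Gamma(Y, \mathcal{O}_Y)$ is the identity on $B$. Writing $Y = \Spec(B) \setminus V(I)$ for the ideal $I$ defining the complement, the containment $\mathfrak{p} \in V(I)$ gives $I \subseteq \mathfrak{p}$, so I can select a nonzero $f \in I$. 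Since $f$ belongs to the proper ideal $\mathfrak{p}$, it is not a unit in $B$, and by construction $D(f) \subseteq Y$.

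The contradiction would then come from factoring the identity restriction through $D(f)$:
$$B = \Gamma(\Spec(B), \mathcal{O}) \longrightarrow \Gamma(D(f), \mathcal{O}) = B_f \longrightarrow \Gamma(Y, \mathcal{O}_Y) = B,$$
where the first arrow is the canonical localization $B \to B_f$ and the second arrow $r : B_f \to B$ is restriction along $Y \subseteq D(f)$. Since the composition is the identity on $B$, the map $r$ is a $B$-algebra retraction; applying $r$ to the equality $f \cdot (1/f) = 1$ in $B_f$ yields $f \cdot r(1/f) = 1$ in $B$, so $f$ is a unit. This contradicts $f \in \mathfrak{p}$, so no height-one prime of $B$ lies in $Z$, and $Z$ has codimension at least two.

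I do not anticipate a serious obstacle here: the only point requiring slight care is the tautological assertion that the restriction $\Gamma(\Spec(B), \mathcal{O}) \to \Gamma(Y, \mathcal{O}_Y)$ is literally the identity once both sides are identified with $B$, which is exactly what the quasi-affine hypothesis provides (as opposed to merely knowing $B$ is the ring of functions on $Y$). The Noetherian assumption on $B$ is not strictly needed for the argument above, but ensures that $\Spec(B)$ is Noetherian so that the codimension-theoretic conclusion is well-behaved and immediately applicable in the later sections.
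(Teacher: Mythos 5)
Your argument has a genuine gap, and it is a conceptual one rather than a missing detail. You correctly observe that for $f \in I \subseteq \mathfrak{p}$ nonzero one has $D(f) \subseteq Y$, but you then invoke a restriction map ``$r: B_f \to B$ along $Y \subseteq D(f)$.'' The containment is backwards: you have $D(f) \subseteq Y$, not $Y \subseteq D(f)$, so restriction of functions gives $\Gamma(Y) \to \Gamma(D(f))$, i.e. the ordinary localization $B \to B_f$, and no map $B_f \to B$ exists. The factorization you propose therefore collapses to the tautology $B \to B \to B_f$ being the localization map, from which nothing follows. Worse, the strategy cannot be patched: the only elements $f \in B$ with $Y \subseteq D(f)$ are those that vanish nowhere on $Y$, and since $\Gamma(Y, \mathcal{O}_Y) = B$ by hypothesis, any such $f$ is already a unit of $B$. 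So no nonzero nonunit $f \in \mathfrak{p}$ can ever satisfy $Y \subseteq D(f)$, and the ``retraction $B_f \to B$'' you would need is structurally unavailable. Your parenthetical remark that the Noetherian hypothesis is not strictly needed is also a symptom of the broken argument: in a correct proof it is genuinely used.

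The paper proceeds differently, and the difference is instructive. Rather than shrinking down to a distinguished open $D(f)$ inside $Y$, it enlarges $Y$ to the intermediate open $U := \Spec(B) \setminus V(\mathfrak{p})$, which removes only the single divisor $V(\mathfrak{p})$ rather than the full complement of $Y$. The sandwich $\Spec(B) \supseteq U \supseteq Y$ together with $\Gamma(\Spec B) = B = \Gamma(Y)$ forces the restriction $\Gamma(\Spec B, \mathcal{O}) \to \Gamma(U, \mathcal{O})$ to be an isomorphism (surjective because it is split by the composite down to $\Gamma(Y)$, injective because $\Spec(B)$ is integral). The punchline is then the cited theorem of Moret-Bailly: for a quasi-compact open complement of a divisor in an integral Noetherian affine scheme, this restriction map cannot be an isomorphism. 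The Noetherian hypothesis enters precisely to guarantee quasi-compactness of $U$, which is an input to Moret-Bailly's result. This is not a purely formal localization trick, and no elementary retraction argument of the kind you attempted is available in this generality.
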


\begin{proof}
    Choose some minimal prime $\mathfrak{p}$ in the complement of $\arbitraryQuasiAffineScheme$, and let $Z$ denote the integral scheme $\Spec(\ringOfFunctionsOnArbitraryQuasiAffineScheme/\mathfrak{p})$. Letting $U := \specOfRingOfFunctionsOnArbitraryQuasiAffineScheme\setminus Z$ and $X := \specOfRingOfFunctionsOnArbitraryQuasiAffineScheme$, we have a containment of open subschemes \begin{equation}
        \label{Containment of Various Quasiaffine Schemes}
    X \supseteq U \supseteq \arbitraryQuasiAffineScheme \end{equation} which induces a map $j^{\#}: H^0(U; \mathcal{O}_U) \xrightarrow{} H^0(\arbitraryQuasiAffineScheme; \mathcal{O}_\arbitraryQuasiAffineScheme) = \ringOfFunctionsOnArbitraryQuasiAffineScheme$. This map is surjective by the induced map of functions given by \labelcref{Containment of Various Quasiaffine Schemes} and injective since $\arbitraryQuasiAffineScheme$, and thus $\specOfRingOfFunctionsOnArbitraryQuasiAffineScheme$, are integral, and thus we see that $j^{\#}$ is an isomorphism. From this and \labelcref{Containment of Various Quasiaffine Schemes}, it follows that the restriction map \[\mathcal{O}_X(X) \xrightarrow{} \mathcal{O}_X(U)\] is an isomorphism. Note that $U$ is quasi-compact, as it is an open subset of $\specOfRingOfFunctionsOnArbitraryQuasiAffineScheme$ for $\ringOfFunctionsOnArbitraryQuasiAffineScheme$ Noetherian. However, for quasi-compact $U$, the main result of \cite{MoretBaillyComplementofADivisorInAnAffineScheme} gives that \labelcref{Containment of Various Quasiaffine Schemes} is not an isomorphism if $\mathfrak{p}$ is a divisor, so the height of $\mathfrak{p}$ must be at least two. 
\end{proof}

\begin{proof}[Proof of \cref{Class Group of AffineClosureofCotangentBundleofBasicAffineSpace Vanishes}]
    Since the complement of $T^*(G/N)$ has codimension at least two in $\affineClosureOfCotangentBundleofBasicAffineSpace$ by \cref{Complement to QuasiAffine Noetherian Scheme Must Be of Codimension At Least Two}, it suffices to show the analogous claim for $T^*(G/N)$. Since $T^*(G/N)$ is smooth, by the Auslander-Buchsbaum theorem its Picard group and class group agree. Therefore by \cref{Vanishing of Class and Picard Group for Affine Closure of G/N} it is enough to show that the map \[\pi^*: \text{Pic}(G/N) \to \text{Pic}(T^*(G/N))\] of abelian groups is an isomorphism. This map is injective, as any line bundle $\mathcal{L}$ for which $\pi^*(\mathcal{L})$ is trivial has the property that $\mathcal{L} \cong z^*(\pi^*(\mathcal{L}))$ is also trivial, where $z$ denotes the zero section. The surjectivity follows from \cite[Corollaire IV.21.4.11, Erratum]{DieudonneGrothendieckEGA}, as $\pi: T^*(G/N) \to G/N$ is a faithfully flat morphism to a normal variety whose its fibers are vector spaces. %
\end{proof}

Since the class group of $\Spec(\ringOfFunctionsForCOTANGENTBUNDLEOfBasicAffineSpace) = \affineClosureOfCotangentBundleofBasicAffineSpace$ is trivial when $G$ is simply connected, we immediately obtain: 

\begin{Corollary}\label{The Ring ringOfFunctionsForCOTANGENTBUNDLEOfBasicAffineSpace is a UFD}
    The ring $\ringOfFunctionsForCOTANGENTBUNDLEOfBasicAffineSpace$ is a unique factorization domain if $G$ is simply connected. 
\end{Corollary}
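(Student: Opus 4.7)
The plan is to deduce this immediately from the preceding \cref{Class Group of AffineClosureofCotangentBundleofBasicAffineSpace Vanishes} together with the classical criterion that a Noetherian normal integral domain is a UFD if and only if its divisor class group is trivial. Concretely, I would first assemble the three hypotheses of that criterion: $\ringOfFunctionsForCOTANGENTBUNDLEOfBasicAffineSpace$ is Noetherian by \cref{affineClosureOfCotangentBundleofBasicAffineSpace is a Closed Subscheme of |W| Many Products of G/N times LGd timesLTdsslash W LTD and Multiplication Map Is Surjective on Graded Pieces}; it is normal, as noted immediately after the statement of \cref{affineClosureOfCotangentBundleofBasicAffineSpace is normal and Q-factorial for all G and is factorial when G is simply connected}, because the ring of global functions on the smooth variety $T^*(G/N)$ is normal; and it is an integral domain since $T^*(G/N)$ is irreducible.

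Next I would invoke \cref{Class Group of AffineClosureofCotangentBundleofBasicAffineSpace Vanishes} in the simply-connected case, which provides that $\text{Cl}(\affineClosureOfCotangentBundleofBasicAffineSpace)$ is trivial. The final step is to cite the standard algebraic fact (e.g.\ Hartshorne, Proposition II.6.2, or the Stacks Project) that for a Noetherian normal domain $R$, the Weil divisor class group of $\Spec(R)$ vanishes precisely when every height-one prime of $R$ is principal, which is equivalent to $R$ being a UFD.

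There is essentially no obstacle here: all the content lives in \cref{Class Group of AffineClosureofCotangentBundleofBasicAffineSpace Vanishes} and \cref{Vanishing of Class and Picard Group for Affine Closure of G/N}, whose proofs have already been given. The only minor thing to be careful about is to note explicitly that $\affineClosureOfCotangentBundleofBasicAffineSpace$ is integral (hence $\ringOfFunctionsForCOTANGENTBUNDLEOfBasicAffineSpace$ is a domain, not merely a normal Noetherian ring), so that the UFD criterion applies as stated; this is clear from the fact that $T^*(G/N)$ is irreducible and dense in its affinization.
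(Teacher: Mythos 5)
Your proof is correct and matches the paper's argument exactly: the paper also deduces the corollary directly from \cref{Class Group of AffineClosureofCotangentBundleofBasicAffineSpace Vanishes} via the standard fact that a Noetherian normal domain with trivial class group is a UFD. You have simply spelled out the verification of the hypotheses (Noetherian, normal, integral) in a bit more detail than the paper does.
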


\subsection{Grading on Functions on $T^*(G/N)$}\label{Grading on Functions on T*(G/N) Subsection}
We can define a $\mathbb{G}_m$-action on $\affineClosureOfCotangentBundleofBasicAffineSpace$ defined as follows. Let 2$\rho^{\vee}$ denote the product of the positive coroots in $T$. This defines a map $p: \mathbb{G}_m \to T$ which we denote $\alpha \mapsto h_{\alpha}$. Define a $\mathbb{G}_m$-action on $G \times \LB$ via \[(g, \xi)\alpha := (gh_{\alpha}, \alpha^2\text{Ad}_{h_{\alpha}^{-1}}(\xi)).\] Since, for any $u \in N$ \[(guh_{\alpha}, \alpha^2\text{Ad}_{h_{\alpha}^{-1}}\text{Ad}_{u^{-1}}(\xi)) = (gh_{\alpha}(h_{\alpha}^{-1}uh_{\alpha}), \alpha^2\text{Ad}_{h_{\alpha}^{-1}u^{-1}h_{\alpha}h_{\alpha}^{-1}}(\xi)) = (gh_{\alpha}u_0, \alpha^2\text{Ad}_{u_0^{-1}}\text{Ad}_{h_{\alpha}^{-1}}(\xi))\] where $u_0 := h_{\alpha}^{-1}uh_{\alpha}$, we see that this gives an action of $\mathbb{G}_m$ on $T^*(G/N) \cong G \mathop{\times}\limits^{N} \LB$ and in particular equips $\ringOfFunctionsForCOTANGENTBUNDLEOfBasicAffineSpace$ with a grading $\ringOfFunctionsForCOTANGENTBUNDLEOfBasicAffineSpace = \oplus_i\ringOfFunctionsForCOTANGENTBUNDLEOfBasicAffineSpace^i$, where we use superscripts for the grading to disambiguate from the $\characterlatticeforT$-grading on $\ringOfFunctionsForCOTANGENTBUNDLEOfBasicAffineSpace$ of \cref{Affine Closures of Basic Affine Space and Its Cotangent Bundle Subsection}. For a nonzero homogeneous element $r \in \ringOfFunctionsForCOTANGENTBUNDLEOfBasicAffineSpace$, with respect to this grading, we let $c(r)$ denote the unique integer with $r \in \ringOfFunctionsForCOTANGENTBUNDLEOfBasicAffineSpace^{c(r)}$, and refer to $c(r)$ as the $c$\textit{-grading} of $r$. We use this term since, when $G$ is semisimple, this grading is conical, as stated in the last point of the following proposition: 

\begin{Proposition}\label{Grading on Ring of Functions is (1) Admits Equivariant Projection and Moment Maps (2) Is Determined by T Weight and Height (3) is W-equivariant (4) is conical}
    The above grading on $\ringOfFunctionsForCOTANGENTBUNDLEOfBasicAffineSpace$ has the following properties:
    \begin{enumerate}
        \item The maps $\projectionFromAffineClosureofCotangentBundleToAffineClosureofSpace$ and $\momentMapFromAFFINECLOSUREofCotangentSpacewithTOTALGROUP$ are $\mathbb{G}_m$-equivariant, where $\affineClosureofBasicAffineSpace$ is equipped with a $\mathbb{G}_m$-action via restricting the $T$-action via $p$ and $\mathbb{G}_m$ acts on $\LGd \times_{\LTd\sslash W} \LTd$ via $\alpha(\xi, \nu) = (\alpha^2\xi, \alpha^2\nu)$. 
        \item For any nonzero $r \in \ringOfFunctionsForCOTANGENTBUNDLEOfBasicAffineSpace_{\lambda}$ of usual grading $h_r$, $r$ is homogeneous with respect to the $c$-grading and moreover $c(r) = 2h_r + \langle \lambda, 2\rhocheck \rangle$.
        \item The grading is $W$-equivariant--that is, $c(r) = c(wr)$ for any homogeneous nonzero $r \in \ringOfFunctionsForCOTANGENTBUNDLEOfBasicAffineSpace$ and $w \in W$. 
        \item If $G$ is semisimple, then the $c$-grading on $\ringOfFunctionsForCOTANGENTBUNDLEOfBasicAffineSpace$ is \textit{conical}--that is, $R^i$ is nonzero only if $i \geq 0$ and $R^0 = k$. 
    \end{enumerate}
\end{Proposition}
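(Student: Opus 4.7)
The plan is to prove the four parts in order, with each building on the previous.

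For part (1), I would directly unwind the definition of the $c$-grading action $(g, \xi)\alpha = (gh_\alpha, \alpha^2\Ad_{h_\alpha^{-1}}\xi)$. Since $\projectionFromAffineClosureofCotangentBundleToAffineClosureofSpace$ sends $(g,\xi)$ to $gN$, the $\alpha$-translate maps to $gh_\alpha N$, matching the right $T$-action by $p(\alpha)$ on $\affineClosureofBasicAffineSpace$. For $\momentMapFromAFFINECLOSUREofCotangentSpacewithTOTALGROUP$, using the formula $\mu_G(g,\xi) = \Ad^*_g\xi$ and $G$-equivariance, the $\alpha$-translate becomes $\Ad^*_{gh_\alpha}(\alpha^2\Ad_{h_\alpha^{-1}}\xi) = \alpha^2\Ad^*_g\xi$, giving scaling by $\alpha^2$ on $\LGd$; for the $\LTd$-component, the fact that $\Ad_{h_\alpha^{-1}}$ acts trivially on $\LT$ (since $h_\alpha \in T$) shows only the $\alpha^2$-scaling survives.

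For part (2), I would factor the $c$-grading $\mathbb{G}_m$-action into the commuting composition of (i) cotangent scaling by $\alpha^2$ (which acts as $\alpha^{2h_r}$ on a usual-grade $h_r$ element) and (ii) right $T$-translation by $h_\alpha = p(\alpha)$ (which acts on a $T$-weight $\lambda$ element by the character $\lambda(h_\alpha) = \alpha^{\langle\lambda, 2\rhocheck\rangle}$). Multiplying these characters gives the formula $c(r) = 2h_r + \langle\lambda, 2\rhocheck\rangle$.

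Part (3) is the main obstacle because $2\rhocheck$ is not $W$-invariant in general, so neither constituent $\mathbb{G}_m$-action factored in part (2) commutes individually with the Gelfand-Graev $W$-action; $W$-equivariance of the $c$-grading is therefore a genuine conspiracy forcing the scaling grade to shift under $W$ in a prescribed way. My approach is to use the $G \times (T \rtimes W)$-equivariant closed embedding of \cref{affineClosureOfCotangentBundleofBasicAffineSpace is a Closed Subscheme of |W| Many Products of G/N times LGd timesLTdsslash W LTD and Multiplication Map Is Surjective on Graded Pieces}. On the target $\LGd \times_{\LTd\sslash W}\LTd \times \bigtimes_w \affineClosureofBasicAffineSpace$ I would define a $\mathbb{G}_m$-action by scaling by $\alpha^2$ on the first factor and the right $T$-action via $p$ on every $\affineClosureofBasicAffineSpace$ factor. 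This target action manifestly commutes with $W$: scaling is central on $\LGd \times \LTd$, and the $T$-action via $p$ is identical on each $\affineClosureofBasicAffineSpace$ factor and hence commutes with the factor-permutation component of the $W$-action. The main step is then to verify that the embedding intertwines the source $c$-grading with this target action, i.e., that $\pi_w(\alpha \cdot m) = p(\alpha)\cdot\pi_w(m)$ for each $w$. Part (1) handles the case $w = e$ as well as the moment map factor, and the general case amounts to checking $\pi(w\alpha m) = \pi(\alpha w m)$, which I would establish by induction on the length of $w$ in the simple reflections, reducing to a direct computation using the explicit Fourier-type description of the Gelfand-Graev action on each simple-root $\SL_2$-subgroup. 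Once the embedding is $\mathbb{G}_m$-equivariant, commutation on the target pulls back through the closed embedding to give $W$-equivariance of the $c$-grading on $\affineClosureOfCotangentBundleofBasicAffineSpace$.

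For part (4), I would combine parts (2) and (3) with \cref{affineClosureOfCotangentBundleofBasicAffineSpace is a Closed Subscheme of |W| Many Products of G/N times LGd timesLTdsslash W LTD and Multiplication Map Is Surjective on Graded Pieces}. The ring $R$ is generated as an algebra by $\Sym(\LG)$, $\Sym(\LT)$, and the subspaces $w(\ringOfFunctionsForBasicAffineSpace_\lambda)$ for dominant $\lambda$ and $w \in W$. The first two have nonnegative $c$-grading $2d \geq 0$, and by part (3) any element of $w(\ringOfFunctionsForBasicAffineSpace_\lambda)$ has $c$-grading equal to that of an element of $\ringOfFunctionsForBasicAffineSpace_\lambda$, namely $\langle\lambda, 2\rhocheck\rangle \geq 0$ for dominant $\lambda$. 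Hence $R^i = 0$ for $i < 0$. For $R^0 = \groundfield$, any $c$-grading zero element must be a product of generators each of $c$-grading zero, forcing each moment map factor to be a scalar and forcing $\langle\lambda, 2\rhocheck\rangle = 0$ for the $w(\ringOfFunctionsForBasicAffineSpace_\lambda)$ factors; when $G$ is semisimple, $2\rhocheck$ is strictly positive on the nonzero dominant cone, so $\lambda = 0$, placing the element in $\ringOfFunctionsForBasicAffineSpace_0 = \groundfield$.
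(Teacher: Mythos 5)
Your arguments for parts (1) and (4) are correct and track the paper's own reasoning; I will not belabor them.

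Your proof of part (2) is correct and is genuinely different from, and simpler than, what the paper does. The paper first uses the surjectivity of the multiplication map from \cref{affineClosureOfCotangentBundleofBasicAffineSpace is a Closed Subscheme of |W| Many Products of G/N times LGd timesLTdsslash W LTD and Multiplication Map Is Surjective on Graded Pieces} together with (1) to establish the formula for dominant $\lambda$, and then extends it to all of $\characterlatticeforT$ by an induction over Weyl chambers, exploiting that $\ringOfFunctionsForCOTANGENTBUNDLEOfBasicAffineSpace$ is a domain. Your observation that the $c$-grading action on $G\times\LB$ is literally the composition of the fiber dilation by $\alpha^2$ with right translation by $p(\alpha)=h_\alpha$, that these two $\mathbb{G}_m$-actions commute on $T^*(G/N)$ and hence on its affinization, and that a bi-homogeneous $r$ therefore has a single $c$-degree $2h_r + \langle\lambda, 2\rhocheck\rangle$, gives the statement in one stroke and sidesteps the induction entirely. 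This is a cleaner route to the same formula.

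For part (3) your overall strategy is sound but the key step is not established, and as written the induction does not go through. You correctly reduce matters to showing that the $G\times(T\rtimes W)$-equivariant closed embedding intertwines the source $c$-action with your target $\mathbb{G}_m$-action, and the target side is handled correctly (scaling commutes with $W$ on $\LGd\times_{\LTd\sslash W}\LTd$, and the factor permutation on $\prod_w\affineClosureofBasicAffineSpace$ commutes with the identical $p$-translation on each factor). The problem is the inductive step for $\mathbb{G}_m$-equivariance of the maps $\projectionFromAffineClosureofCotangentBundleToAffineClosureofSpace_w$. Knowing $\projectionFromAffineClosureofCotangentBundleToAffineClosureofSpace(s(\alpha\cdot m))=\projectionFromAffineClosureofCotangentBundleToAffineClosureofSpace(\alpha\cdot(sm))$ for a simple $s$ and all $m$, and $\projectionFromAffineClosureofCotangentBundleToAffineClosureofSpace(w'(\alpha\cdot m))=\projectionFromAffineClosureofCotangentBundleToAffineClosureofSpace(\alpha\cdot(w'm))$, does \emph{not} let you conclude $\projectionFromAffineClosureofCotangentBundleToAffineClosureofSpace(sw'(\alpha\cdot m))=\projectionFromAffineClosureofCotangentBundleToAffineClosureofSpace(\alpha\cdot(sw'm))$: you would need $w'(\alpha\cdot m)=\alpha\cdot(w'm)$ as points of $\affineClosureOfCotangentBundleofBasicAffineSpace$, not merely agreement after projecting by $\projectionFromAffineClosureofCotangentBundleToAffineClosureofSpace$. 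Equalities after postcomposing with $\projectionFromAffineClosureofCotangentBundleToAffineClosureofSpace$ do not compose. To make the induction close, you would need the stronger fact that each simple reflection $s$ commutes with the $c$-action on the nose (equivalently, that $s$ preserves the $c$-grading on all of $\ringOfFunctionsForCOTANGENTBUNDLEOfBasicAffineSpace$); but if you establish that, the whole closed-embedding apparatus becomes unnecessary, since commutation for all $w$ then follows by composing simple reflections and (3) is immediate. Either way, the crux is the $\SL_2$/symplectic-Fourier computation, which you gesture at but do not carry out, and which requires identifying how the $c$-action looks in the Kazhdan--Laumon trivialization of $T^*(V_\alpha)$ (a nontrivial bookkeeping exercise, since the fiber coordinates there are adapted to $V_\alpha\to G/Q_\alpha$ rather than to $T^*(G/N)\to G/N$). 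The paper avoids all of this by citing the known computation (Ginzburg--Riche, building on Levasseur--Stafford) that $w(a)$ for $a\in\ringOfFunctionsForBasicAffineSpace_\lambda$ has usual grading $\langle\lambda - w\lambda, \rhocheck\rangle$, and then simply substituting into your formula from (2) to get $c(w(a)) = 2\langle\lambda - w\lambda, \rhocheck\rangle + \langle w\lambda, 2\rhocheck\rangle = \langle\lambda, 2\rhocheck\rangle$, manifestly independent of $w$; $W$-invariance of $c$ on the remaining generators $\LG\oplus\LT$ is then handled by (1). You should either perform your proposed $\SL_2$-computation in full (at the level of commutation with the $c$-action, not just after applying $\projectionFromAffineClosureofCotangentBundleToAffineClosureofSpace$) or fall back on the cited grading computation as the paper does.
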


\begin{proof}
    The first claim immediately follows from the fact that $\pi$ and $\mu$ are $\mathbb{G}_m$-equivariant. By \cref{affineClosureOfCotangentBundleofBasicAffineSpace is a Closed Subscheme of |W| Many Products of G/N times LGd timesLTdsslash W LTD and Multiplication Map Is Surjective on Graded Pieces} and (1), we see that (2) holds for any $\lambda$ lying in the closure of the Weyl chamber $\overline{C} = 1\overline{C}$ determined by our choices of $B$ and $N$. Using this as the base case, we now proceed by induction on the length of $w \in W$. For any $\lambda \in w\overline{C}$, we may choose some simple reflection $s$ such that $sw < w$. Let $r \in \ringOfFunctionsForCOTANGENTBUNDLEOfBasicAffineSpace_{\lambda}$. By induction we see that $s(r)$ and $rs(r)$ are both homogeneous with respect to the $c$-grading, and thus $r$ is as well since $\ringOfFunctionsForCOTANGENTBUNDLEOfBasicAffineSpace$ is an integral domain. Letting $h_{s(r)}$ and $h_{r}$ denote the respective usual gradings we then obtain \[c(r) + 2h_{s(r)} + \langle s(\lambda), 2\rhocheck \rangle = c(r) + c(s(r)) = c(rs(r)) = 2(h_{s(r)} + h_{r}) + \langle \lambda + s(\lambda), 2\rhocheck\rangle\] where both the first and last step use the inductive hypothesis. We therefore see \[c(r) = 2h_r + \langle \lambda, 2\rhocheck\rangle\] which shows (2). 

    To prove $(3)$, it suffices to show the claim on some generating set. By \cref{affineClosureOfCotangentBundleofBasicAffineSpace is a Closed Subscheme of |W| Many Products of G/N times LGd timesLTdsslash W LTD and Multiplication Map Is Surjective on Graded Pieces}, we may choose this generating set whose elements are a basis of $\LG \oplus \LT$ as well as the elements $w(a)$ for all $a \in A_{\lambda}$ for $\lambda$ dominant and $w \in W$. The former case follows from (1) and the $W$-equivariance of $\momentMapFromAFFINECLOSUREofCotangentSpacewithTOTALGROUP$, so we may assume $r = w(a)$ for some $a \in A_{\lambda}$. However, for such $r$, it is known \cite[Remark 3.2.2(1)]{GinzburgRicheDifferentialOperatorsOnBasicAffineSpaceandtheAffineGrassmannian}, building on the analogous claim for differential operators \cite[Proposition 2.9]{LevasseurStaffordDifferentialOperatorsandCohomologyGroupsontheBasicAffineSpace}, \cite{BezrukavnikovBravermanPositselskiiGluingofAbelianCategoriesandDifferentialOperatorsontheBasicAffineSpace}, that the usual grading of $w(a)$ is $\langle \lambda - w(\lambda), \rhocheck\rangle$. In particular, we see \[c(w(a)) = 2\langle \lambda - w(\lambda), \rhocheck\rangle + \langle w(\lambda), 2\rhocheck \rangle = \langle \lambda, 2\rhocheck \rangle\] is independent of $w$. Furthermore, when $G$ is semisimple, we have that $\langle \lambda, 2\rho^{\vee} \rangle > 0$ for all nonzero dominant $\lambda$. Therefore for such $G$ each element in the set \[\{w(\ringOfFunctionsForBasicAffineSpace_{\lambda}): w \in W\} \cup \LG \oplus \LT\] has positive $c$-grading, where $\lambda$ varies over the dominant nonzero weights. Since this set generates $\ringOfFunctionsForCOTANGENTBUNDLEOfBasicAffineSpace$ by \cref{affineClosureOfCotangentBundleofBasicAffineSpace is a Closed Subscheme of |W| Many Products of G/N times LGd timesLTdsslash W LTD and Multiplication Map Is Surjective on Graded Pieces}, we obtain (4). 
\end{proof}

\begin{Corollary}\label{Poisson Bracket for affineClosureOfCotangentBundleofBasicAffineSpace Has Degree -2} 
    The Poisson bracket $\{\cdot, \cdot\}$ on the algebra $\ringOfFunctionsForCOTANGENTBUNDLEOfBasicAffineSpace$ lowers $c$-grading by 2--that is, if $x \in \ringOfFunctionsForCOTANGENTBUNDLEOfBasicAffineSpace^i$ and $y \in \ringOfFunctionsForCOTANGENTBUNDLEOfBasicAffineSpace^j$, then $\{x, y\} \in \ringOfFunctionsForCOTANGENTBUNDLEOfBasicAffineSpace^{i + j - 2}$.  
\end{Corollary}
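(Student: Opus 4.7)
The plan is to derive the $c$-degree shift of the Poisson bracket from the $\mathbb{G}_m$-weight of the canonical symplectic form $\omega$ on the dense open $T^*(G/N) \subseteq \affineClosureOfCotangentBundleofBasicAffineSpace$. Write $a_\alpha$ for the $\mathbb{G}_m$-action of \cref{Grading on Functions on T*(G/N) Subsection}. Once I establish that $a_\alpha^*\omega = \alpha^2 \omega$, then for homogeneous $f \in \ringOfFunctionsForCOTANGENTBUNDLEOfBasicAffineSpace^i$ and $g \in \ringOfFunctionsForCOTANGENTBUNDLEOfBasicAffineSpace^j$ the general naturality identity $a_\alpha^*\{f,g\}_\omega = \{a_\alpha^*f,\, a_\alpha^*g\}_{a_\alpha^*\omega}$ gives
\[a_\alpha^*\{f,g\}_\omega \;=\; \alpha^{-2}\{\alpha^i f,\,\alpha^j g\}_\omega \;=\; \alpha^{i+j-2}\{f,g\}_\omega,\]
which is precisely the statement that $\{f,g\} \in \ringOfFunctionsForCOTANGENTBUNDLEOfBasicAffineSpace^{i+j-2}$, at least on $T^*(G/N)$. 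Since $T^*(G/N)$ is dense in the affine scheme $\affineClosureOfCotangentBundleofBasicAffineSpace$, this equality of regular functions propagates to all of $\affineClosureOfCotangentBundleofBasicAffineSpace$, as desired.

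To verify $a_\alpha^*\omega = \alpha^2 \omega$ on $T^*(G/N) \cong G \mathop{\times}\limits^{N} \LB$, I would decompose the $\mathbb{G}_m$-action as the composition of two commuting pieces: the right action of $h_\alpha = p(\alpha) \in T$ on $G/N$ lifted to $T^*(G/N)$, which on $G \times \LB$ is $(g,\xi) \mapsto (gh_\alpha,\, \Ad_{h_\alpha^{-1}}\xi)$, together with fiberwise scalar multiplication by $\alpha^2$ on the cotangent bundle. The first factor is the cotangent lift of a diffeomorphism of the base and is therefore automatically a symplectomorphism; the second acts on the tautological $1$-form $\theta$ by $\theta \mapsto \alpha^2 \theta$ and so multiplies $\omega = d\theta$ by $\alpha^2$. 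Composing yields $a_\alpha^*\omega = \alpha^2\omega$.

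I do not expect a substantive obstacle here: the core input is the functorial behavior of the tautological one-form under cotangent lifts and fiberwise scaling, both of which are standard. The only minor care point is that $\omega$ is \emph{a priori} defined only on the smooth open $T^*(G/N)$, but since by definition $\ringOfFunctionsForCOTANGENTBUNDLEOfBasicAffineSpace = \Gamma(T^*(G/N), \mathcal{O})$ the Poisson bracket of two elements of $\ringOfFunctionsForCOTANGENTBUNDLEOfBasicAffineSpace$ is automatically an element of $\ringOfFunctionsForCOTANGENTBUNDLEOfBasicAffineSpace$, and checking the weight on the dense open is therefore enough. As a backup approach, one could instead verify the degree shift directly on the finite generating set for $\ringOfFunctionsForCOTANGENTBUNDLEOfBasicAffineSpace$ supplied by \cref{affineClosureOfCotangentBundleofBasicAffineSpace is a Closed Subscheme of |W| Many Products of G/N times LGd timesLTdsslash W LTD and Multiplication Map Is Surjective on Graded Pieces} together with the Leibniz rule for $\{\cdot,\cdot\}$, but the symplectic-geometric argument above seems more transparent.
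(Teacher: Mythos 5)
Your proof is correct, and it takes a genuinely different route from the paper's. The paper computes the $c$-degree shift by decomposing the $\mathbb{G}_m$-action into the $T$-grading and the fiber-dilation grading, observing separately that the Poisson bracket preserves the former and lowers the latter by one, and then combining these via the formula $c(r) = 2h_r + \langle\lambda,2\rho^\vee\rangle$ from \cref{Grading on Ring of Functions is (1) Admits Equivariant Projection and Moment Maps (2) Is Determined by T Weight and Height (3) is W-equivariant (4) is conical}(2). You instead treat the $c$-grading $\mathbb{G}_m$-action as a single unit and compute the $\mathbb{G}_m$-weight of the symplectic form directly via the decomposition into a cotangent lift (weight $0$) and fiber scaling (weight $2$), then apply the transformation law for Poisson brackets under conformal symplectomorphisms. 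Both routes ultimately rest on the same two geometric facts --- that the lifted $T$-action preserves $\omega$ and that fiber dilation scales $\omega$ linearly --- but your version is more direct in that it avoids re-entering the combinatorial formula from part~(2) of the proposition and instead exhibits $a_\alpha^*\omega = \alpha^2\omega$ as the single governing identity. The extension from $T^*(G/N)$ to $\affineClosureOfCotangentBundleofBasicAffineSpace$ by density is handled correctly, and your identity $\{f,g\}_{c\omega} = c^{-1}\{f,g\}_\omega$ supplies the needed factor of $\alpha^{-2}$. One cosmetic point: the intermediate line $a_\alpha^*\{f,g\}_\omega = \alpha^{-2}\{\alpha^i f, \alpha^j g\}_\omega$ is correct but somewhat compresses the two steps (naturality of the bracket under pullback, then rescaling of the form); it would be cleaner to display them as $a_\alpha^*\{f,g\}_\omega = \{a_\alpha^*f, a_\alpha^*g\}_{a_\alpha^*\omega} = \{\alpha^i f, \alpha^j g\}_{\alpha^2\omega} = \alpha^{i+j-2}\{f,g\}_\omega$.
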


\begin{proof}
Fixing $x, y$ as above, by \cref{Grading on Ring of Functions is (1) Admits Equivariant Projection and Moment Maps (2) Is Determined by T Weight and Height (3) is W-equivariant (4) is conical}(2) we may assume that there exists $\lambda, \lambda' \in \characterlatticeforT$ and $h, h' \in \mathbb{Z}^{\geq 0}$ such that $i = \langle \lambda, 2\rhocheck\rangle + 2h$, $j = \langle \lambda', 2\rhocheck\rangle + 2h'$, and, with respect to the $\mathbb{G}_m$-action given by fiber dilation, the grading on $x$, respectively $y$, is $h$, respectively $h'$. The Poisson bracket on $\ringOfFunctionsForCOTANGENTBUNDLEOfBasicAffineSpace$ preserves the $T$-grading and lowers the (usual) degree of a vector field by one (which can be checked locally on $T^*(G/N)$). Therefore we see that $\{x, y\} \in R_{\lambda + \lambda'}$ and its grading from the $\mathbb{G}_m$-action given by fiber dilation is $h + h' - 1$, and so its $c$-grading is \[2(h + h' - 1) + \langle \lambda + \lambda', 2\rhocheck \rangle  = i + j - 2\] using \cref{Grading on Ring of Functions is (1) Admits Equivariant Projection and Moment Maps (2) Is Determined by T Weight and Height (3) is W-equivariant (4) is conical}(2), as desired. 
\end{proof}

\begin{Remark}
    This $\mathbb{G}_m$-action is also considered when $G = \SL_n$ in \cite[Section 5]{JiaTheGeometryOfTheAffineClosureOfCotangentBundleOfBasicAffineSpaceForSLn}. In particular, it is shown that this grading is compatible with a natural $\mathbb{G}_m$-action given by identifying $\overline{T^*(\text{SL}_n/N)}$ with a hyperk\"ahler reduction of a certain vector space \cite{DancerKirawanSwannImplosionForHyperKahlerManifolds}--see \cite[Proposition 5.5]{JiaTheGeometryOfTheAffineClosureOfCotangentBundleOfBasicAffineSpaceForSLn} for the precise statement.
\end{Remark}

\section{Preliminary Results in Algebraic Geometry}
\subsection{GIT Quotients of Integral Varieties} We now record two properties of GIT quotients of integral affine varieties with $\mathbb{G}_m$-actions which will be used below. 

\begin{Lemma}\label{If You're a Graded Integral Finitely Generated Domain with a Nonzero Graded Element and You Take the GIT Quotient by Gm You Lose a Dimension}
    Assume $S$ is a graded integral finitely generated $k$-algebra such that there is a positively graded or negatively graded homogeneous element $x$. Then $\text{dim}(\text{Spec}(S)\sslash \mathbb{G}_m) \leq \text{dim}(\text{Spec}(S)) - 1$, where we equip $\text{Spec}(S)$ with the $\mathbb{G}_m$-action corresponding to the grading on $S$.
\end{Lemma}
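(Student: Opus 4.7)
The quotient $\Spec(S)\sslash \mathbb{G}_m$ is by definition $\Spec(S^{\mathbb{G}_m}) = \Spec(S_0)$, where $S_0$ denotes the degree-zero summand of $S$. Since $\mathbb{G}_m$ is reductive over $k = \mathbb{C}$, the invariant subring $S_0$ is a finitely generated $k$-algebra, and as a subring of the integral domain $S$ it is itself an integral domain. Consequently both $\Spec(S)$ and $\Spec(S_0)$ are integral affine varieties over $k$, whose Krull dimensions equal the transcendence degrees over $k$ of $\text{Frac}(S)$ and $\text{Frac}(S_0)$ respectively. The desired inequality $\dim(\Spec(S_0)) \leq \dim(\Spec(S)) - 1$ is therefore equivalent to the assertion that $\text{Frac}(S)$ contains an element transcendental over $\text{Frac}(S_0)$.

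My plan is to show that $x$ itself is such an element. If it were not, then clearing denominators in an algebraic dependence would produce a nontrivial relation $\sum_{i=0}^n a_i x^i = 0$ in $S$ with each $a_i \in S_0$ and $a_n \neq 0$. Writing $d := \deg(x) \neq 0$, the term $a_i x^i$ is homogeneous of degree $di$ with respect to the grading on $S$. Since $d \neq 0$, the integers $di$ are pairwise distinct, so comparing graded pieces of the relation forces $a_i x^i = 0$ for every $i$. Since $S$ is an integral domain and $x$ is nonzero, this yields $a_i = 0$ for every $i$, contradicting $a_n \neq 0$.

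The only mildly nontrivial ingredient is that $S_0$ is finitely generated as a $k$-algebra, which is what allows the comparison of dimensions to be reformulated in terms of transcendence degrees; this is the standard consequence of the linear reductivity of $\mathbb{G}_m$ acting on the finitely generated algebra $S$. Once that input is granted, the graded-ring manipulation above is immediate and uses only the hypotheses that $x$ is homogeneous of nonzero degree and that $S$ is a domain; I would not expect any significant technical obstacle.
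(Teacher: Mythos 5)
Your proof is correct and is essentially the paper's argument in different clothing: both prove the key fact --- that $x$ satisfies no nontrivial polynomial relation with coefficients in $S_0$ --- by comparing graded pieces, and the paper then packages this as injectivity of the multiplication map $S_0 \otimes_k k[x] \to S$ (yielding a dominant morphism $\text{Spec}(S) \to \text{Spec}(S_0) \times \mathbb{A}^1$) while you package it as transcendence of $x$ over $\text{Frac}(S_0)$ together with the transcendence-degree characterization of Krull dimension. The ingredients (the graded-piece argument, finite generation of $S_0$, and the resulting dimension count) are the same in both.
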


\begin{proof}
    The fact that $S$ is an integral domain implies that the multiplication map $S_0 \otimes_k k[x] \to S$ is injective, since we may check if an element of $S$ is nonzero by checking if each projection onto each homogeneous summand of $S$ is nonzero. We thus see that the morphism \[\text{Spec}(S) \to \text{Spec}(S_0 \otimes_k k[x]) \cong \text{Spec}(S_0) \times \mathbb{A}^1 \cong \text{Spec}(S)\sslash \mathbb{G}_m \times \mathbb{A}^1\] is dominant. Therefore we see that \[\text{dim}(\text{Spec}(S)\sslash \mathbb{G}_m) + 1 = \text{dim}(\text{Spec}(S)\sslash \mathbb{G}_m) + \text{dim}(\mathbb{A}^1) = \text{dim}(\text{Spec}(S)\sslash \mathbb{G}_m \times \mathbb{A}^1) \leq \text{dim}(\text{Spec}(S))\] where the second equality uses the fact that $S$ is finitely generated and therefore in particular Noetherian. 
\end{proof}

Recall that, for any affine variety $\Spec(S)$ with an action of $\mathbb{G}_m$, this action is determined by a $\mathbb{Z}$-grading on the ring $S$. Moreover, the fixed point subscheme $\Spec(S)^{\mathbb{G}_m}$ is a closed subscheme cut out by the ideal $I$ generated by the $S_i$ for $i \neq 0$. We have an induced map \[\Spec(S/I) = \Spec(S)^{\mathbb{G}_m} \to \Spec(S)\sslash \mathbb{G}_m = \Spec(S_0)\] by composing the inclusion and the quotient map. Moreover, the induced map of rings $S_0 \to S/I$ is surjective since $f = \sum_i f_i \in S$ is congruent to $f_0$ in $S/I$. This proves the following observation:

\begin{Proposition}\label{For Scheme with Gm Action Fixed Points Are Closed Subscheme of GIT Quotient}
    For any affine variety $\Spec(S)$ with an action of $\mathbb{G}_m$, the induced map $\Spec(S)^{\mathbb{G}_m} \to \Spec(S)\sslash \mathbb{G}_m$ is a closed embedding. 
\end{Proposition}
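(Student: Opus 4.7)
The plan is to translate the $\mathbb{G}_m$-action into the algebraic data of a $\mathbb{Z}$-grading on $S$, and then verify the closed embedding claim on the level of rings. Under the standard equivalence between $\mathbb{G}_m$-actions on an affine scheme and $\mathbb{Z}$-gradings on its ring of functions, writing $S = \bigoplus_{i \in \mathbb{Z}} S_i$, the invariant subring is $S^{\mathbb{G}_m} = S_0$, so by definition $\Spec(S)\sslash \mathbb{G}_m = \Spec(S_0)$. For the fixed locus, a point is $\mathbb{G}_m$-fixed precisely when every function of nonzero weight vanishes at it, so $\Spec(S)^{\mathbb{G}_m}$ is cut out scheme-theoretically by the ideal $I \subseteq S$ generated by $\bigcup_{i \neq 0} S_i$; thus $\Spec(S)^{\mathbb{G}_m} = \Spec(S/I)$.

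Next I would identify the induced map $\Spec(S)^{\mathbb{G}_m} \to \Spec(S)\sslash \mathbb{G}_m$ at the level of rings. It comes from the composition $S_0 \hookrightarrow S \twoheadrightarrow S/I$. To conclude that the scheme-theoretic map is a closed embedding, it suffices to verify that this ring homomorphism $S_0 \to S/I$ is surjective. But this is immediate from the grading: any $f \in S$ decomposes uniquely as $f = \sum_i f_i$ with $f_i \in S_i$, and the components with $i \neq 0$ lie in $I$, so $f \equiv f_0 \pmod{I}$. Hence every class in $S/I$ is represented by an element of $S_0$.

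Since $S_0 \to S/I$ is a surjection of rings, the corresponding map of affine schemes $\Spec(S/I) \to \Spec(S_0)$ is a closed embedding, which is exactly the statement of the proposition. There is really no obstacle here; the entire content is the observation that modding out by the ideal generated by the nonzero-weight homogeneous pieces forces every element to be represented by its degree-$0$ component, which is the same as recording that the fixed-point subscheme sits inside the categorical quotient as the vanishing locus of the augmentation ideal of positive-and-negative weight.
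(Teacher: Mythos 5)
Your proof is correct and follows essentially the same route as the paper: identify the $\mathbb{G}_m$-action with a $\mathbb{Z}$-grading on $S$, cut out the fixed locus by the ideal $I$ generated by nonzero-weight homogeneous elements, and observe that $S_0 \to S/I$ is surjective because any $f = \sum_i f_i$ is congruent to $f_0$ modulo $I$. Nothing is missing, and the argument matches the paper's in both strategy and detail.
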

\subsection{Fiber of Projection Over Smooth Locus}
Next, we compute the fiber of $\projectionFromAffineClosureofCotangentBundleToAffineClosureofSpace$ over its smooth locus $\smoothLocusOfAffineclosureBasicAffineSpace$.  Notice that the complement of $T^*(G/N)$ in $T^*(\smoothLocusOfAffineclosureBasicAffineSpace)$ has codimension $\geq 2$ by the stratification \labelcref{Stratification of Affine Closure of Basic Affine Space by Partial Basic Affine Spaces}. From this, we see that $\mathcal{O}(T^*(\smoothLocusOfAffineclosureBasicAffineSpace)) \xrightarrow{\sim} \mathcal{O}(T^*(G/N))$, and therefore we obtain a canonical map $\sigma: T^*(\smoothLocusOfAffineclosureBasicAffineSpace) \to \affineClosureOfCotangentBundleofBasicAffineSpace \times_{\affineClosureofBasicAffineSpace} \smoothLocusOfAffineclosureBasicAffineSpace$. 

\begin{Proposition}\label{Cotangent Bundle of Smooth Locus is Fiber of affineClosureOfCotangentBundleofBasicAffineSpace Over Smooth Locus of G/N-bar}
    The map $\sigma$ is an isomorphism. 
\end{Proposition}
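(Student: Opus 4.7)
The plan is to verify that $\sigma$ is an isomorphism Zariski-locally on $\smoothLocusOfAffineclosureBasicAffineSpace$. Since $\smoothLocusOfAffineclosureBasicAffineSpace$ is open in the affine scheme $\affineClosureofBasicAffineSpace = \Spec(\ringOfFunctionsForBasicAffineSpace)$, I can cover $\smoothLocusOfAffineclosureBasicAffineSpace$ by principal affine opens $D(f) \subseteq \affineClosureofBasicAffineSpace$ with $D(f) \subseteq \smoothLocusOfAffineclosureBasicAffineSpace$ and $f \in \ringOfFunctionsForBasicAffineSpace$. Since $D(f)$ is smooth and affine, the source $T^{*}(D(f))$ is an affine scheme with coordinate ring $\Sym_{\ringOfFunctionsForBasicAffineSpace_{f}}(\mathcal{T}_{D(f)})$, and the target $\affineClosureOfCotangentBundleofBasicAffineSpace \times_{\affineClosureofBasicAffineSpace} D(f)$ is $\Spec(\ringOfFunctionsForCOTANGENTBUNDLEOfBasicAffineSpace_{f})$. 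Hence it suffices to show that the induced ring map $\ringOfFunctionsForCOTANGENTBUNDLEOfBasicAffineSpace_{f} \to \mathcal{O}(T^{*}(D(f)))$ coming from $\sigma|_{D(f)}$ is an isomorphism.

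For this I would run the same Hartogs-style argument the excerpt uses to construct $\sigma$, but one level down. The stratification referenced in the preamble shows $\affineClosureofBasicAffineSpace \setminus G/N$ has codimension at least two in $\affineClosureofBasicAffineSpace$, so after intersecting with $D(f)$ the complement of $D(f) \cap G/N$ in $D(f)$ still has codimension $\geq 2$. Since $\pi \colon T^{*}(D(f)) \to D(f)$ is a vector bundle, the complement of $T^{*}(D(f) \cap G/N)$ in $T^{*}(D(f))$ also has codimension $\geq 2$; as $T^{*}(D(f))$ is smooth, and hence normal, Hartogs yields an isomorphism
\[
    \mathcal{O}(T^{*}(D(f))) \xrightarrow{\sim} \mathcal{O}(T^{*}(D(f) \cap G/N)).
\]
But $T^{*}(D(f) \cap G/N)$ is exactly the principal open $D(\projectionFromAffineClosureofCotangentBundleToAffineClosureofSpace^{\#} f) \subseteq T^{*}(G/N) \subseteq \affineClosureOfCotangentBundleofBasicAffineSpace$, with ring of functions $\ringOfFunctionsForCOTANGENTBUNDLEOfBasicAffineSpace_{f}$. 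So $\mathcal{O}(T^{*}(D(f))) \cong \ringOfFunctionsForCOTANGENTBUNDLEOfBasicAffineSpace_{f}$, and an unraveling of the construction of $\sigma$ identifies this composite with $(\sigma|_{D(f)})^{\#}$.

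\textbf{Main obstacle.} The only real point of care is the diagram chase confirming that the Hartogs identification on $T^{*}(D(f))$ agrees with the ring map induced by $\sigma|_{D(f)}$; this is ultimately forced by the fact that the global identification $\mathcal{O}(T^{*}\smoothLocusOfAffineclosureBasicAffineSpace) \cong \ringOfFunctionsForCOTANGENTBUNDLEOfBasicAffineSpace$ used to define $\sigma$ is compatible with restriction to each principal open $D(f)$. The rest of the argument is bookkeeping about codimension and standard compatibility of localization with global sections on basic opens.
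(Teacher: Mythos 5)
Your argument is correct, and it takes a genuinely different route from the paper's. The paper proves the statement globally and somewhat abstractly: it uses the universal property of affinization to produce a canonical map $\affineClosureOfCotangentBundleofBasicAffineSpace \to \SpecOfSymofSectionsOfTangentBundleOfBasicAffineSpace$, pulls it back over $\smoothLocusOfAffineclosureBasicAffineSpace$ to obtain a retraction $\phi$ of $\sigma$, and then concludes via the standard facts that a section of a separated morphism is a closed embedding and that a closed embedding between equidimensional integral schemes is an isomorphism. Your approach instead checks the isomorphism Zariski-locally on the base $\smoothLocusOfAffineclosureBasicAffineSpace$: over a principal affine $D(f)$ the source and target both become affine, and you identify both coordinate rings with $\ringOfFunctionsForCOTANGENTBUNDLEOfBasicAffineSpace_{f}$ via Hartogs-type extension. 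This is more hands-on and sidesteps the need for the affinization map to $\SpecOfSymofSectionsOfTangentBundleOfBasicAffineSpace$, for separatedness of $\phi$, and for the equidimensionality observation; in exchange, you are running the Hartogs argument twice rather than once. One small point to flag in your final sentence of the computation: the identification $\mathcal{O}\bigl(T^{*}(D(f) \cap G/N)\bigr) \cong \ringOfFunctionsForCOTANGENTBUNDLEOfBasicAffineSpace_{f}$ is not a tautology but itself a Hartogs statement --- it uses that $D(\projectionFromAffineClosureofCotangentBundleToAffineClosureofSpace^{\#}f) \cap T^{*}(G/N)$ has a codimension-$\ge 2$ complement inside the normal affine scheme $D(\projectionFromAffineClosureofCotangentBundleToAffineClosureofSpace^{\#}f) = \Spec(\ringOfFunctionsForCOTANGENTBUNDLEOfBasicAffineSpace_{f})$ (via \cref{Complement to QuasiAffine Noetherian Scheme Must Be of Codimension At Least Two} and \cref{affineClosureOfCotangentBundleofBasicAffineSpace is normal and Q-factorial for all G and is factorial when G is simply connected}); you should make this explicit rather than assert it as an equality of principal opens. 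With that clarified, the argument goes through and the diagram chase you gesture at works exactly as you say: the defining restriction isomorphism $\mathcal{O}(T^*(\smoothLocusOfAffineclosureBasicAffineSpace)) \xrightarrow{\sim} \ringOfFunctionsForCOTANGENTBUNDLEOfBasicAffineSpace$ used to construct $\sigma$ localizes over $D(f)$ to precisely the pair of Hartogs identifications you write down.
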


\begin{proof}
    For any variety $Y$, we let $\mathcal{T}_Y$ denote its tangent sheaf. Notice that we have a canonical map \[\affineClosureOfCotangentBundleofBasicAffineSpace \to \SpecOfSymofSectionsOfTangentBundleOfBasicAffineSpace \] over $\affineClosureofBasicAffineSpace$ given by the universal property of affinization and the fact that $\SpecOfSymofSectionsOfTangentBundleOfBasicAffineSpace$ is affine. Therefore, we obtain a canonical map $\phi$ given by the composite \[\affineClosureOfCotangentBundleofBasicAffineSpace \times_{\affineClosureofBasicAffineSpace} \smoothLocusOfAffineclosureBasicAffineSpace \to \SpecOfSymofSectionsOfTangentBundleOfBasicAffineSpace \times_{\affineClosureofBasicAffineSpace} \smoothLocusOfAffineclosureBasicAffineSpace \cong T^*(\smoothLocusOfAffineclosureBasicAffineSpace)\] where this isomorphism is given by the fact that $\smoothLocusOfAffineclosureBasicAffineSpace$ is smooth, for which the composite \[T^*(\smoothLocusOfAffineclosureBasicAffineSpace) \xrightarrow{\sigma} \affineClosureOfCotangentBundleofBasicAffineSpace \times_{\affineClosureofBasicAffineSpace} \smoothLocusOfAffineclosureBasicAffineSpace \xrightarrow{\phi} T^*(\smoothLocusOfAffineclosureBasicAffineSpace)\] is the identity. Since this map is a section and $\phi$ is separated 
    we see that $\sigma$ is a closed embedding of equidimensional integral schemes, and therefore is an isomorphism. 
\end{proof}

\section{The Singular Locus for Simply Connected $G$}\label{Proof of Main Theorem for Simply Connected G Section}
In this subsection, we study the singular locus of $\affineClosureOfCotangentBundleofBasicAffineSpace$ in the case where $G$ is simply connected. For such $G$ we also introduce the subset $Q$ and study its basic properties. To avoid excessive repetition, \textit{in all of \cref{Proof of Main Theorem for Simply Connected G Section} we assume that $G$ is simply connected}. 

\subsection{Stratification of Affine Closure of $G/N$}\label{Stratification of Affineclosureofbasicaffinespace Subsection}
For any subset of simple coroots or subset of simple roots $\theta$, we let $P_{\theta}$ denote the standard parabolic subgroup containing $B$ and labeled by $\theta$. The goal of \cref{Stratification of Affineclosureofbasicaffinespace Subsection} is to prove the following well known proposition whose proof we are unable to find in the literature:

\begin{Proposition}\label{Stratification of Affine Closure of Basic Affine Space by Partial Basic Affine Spaces Proposition}
    The variety $G/N$ is quasi-affine and its affine closure admits a stratification\begin{equation}
    \label{Stratification of Affine Closure of Basic Affine Space by Partial Basic Affine Spaces}    \affineClosureofBasicAffineSpace =  \bigcup_{\theta}G/[P_{\theta}, P_{\theta}] 
    \end{equation} into the orbits of the action of $G \times T$, where $\theta$ varies over all subsets of simple roots. Moreover, the smooth locus of $\affineClosureofBasicAffineSpace$ contains all locally closed subschemes $G/[P_{\theta}, P_{\theta}]$ where $|\theta| = 1$.
\end{Proposition}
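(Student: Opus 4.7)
Let $\Delta = \{\alpha_1, \dots, \alpha_r\}$ denote the set of simple roots determined by $B$. The plan is to prove the three assertions in sequence.

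\textbf{Quasi-affineness.} Because $G$ is simply connected, the fundamental weights $\omega_1, \dots, \omega_r$ form a $\mathbb{Z}$-basis of $\characterlatticeforT$. Let $V_{\omega_i}$ be the irreducible $G$-representation of highest weight $\omega_i$ with highest weight vector $v_i$, and set $V := \bigoplus_i V_{\omega_i}$ and $v := \sum_i v_i$. I would first verify $\operatorname{Stab}_G(v) = N$: this stabilizer is contained in $\bigcap_i \operatorname{Stab}_G(\mathbb{C} v_i) = \bigcap_i P_{\Delta \setminus \{\alpha_i\}} = B$, and because $\{\omega_i\}$ is a $\mathbb{Z}$-basis of $\characterlatticeforT$, no nontrivial element of $T$ fixes every $v_i$. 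The orbit map $g \mapsto g v$ then yields a locally closed $G$-equivariant embedding $G/N \hookrightarrow V$, so $G/N$ is quasi-affine.

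\textbf{Orbit stratification.} For each $\theta \subseteq \Delta$, set $v^\theta := \sum_{\alpha_i \notin \theta} v_i \in V$. The cocharacter $\xi := \sum_{\alpha_j \in \theta} \alpha_j^\vee$ satisfies $\lim_{t \to 0} \xi(t) \cdot v = v^\theta$, so $v^\theta$ lies in the $T$-orbit closure of $v$ inside $\affineClosureofBasicAffineSpace$. To compute $\operatorname{Stab}_G(v^\theta) = [P_\theta, P_\theta]$, I would first show $\operatorname{Stab}_G(v_i) = [P_{\Delta \setminus \{\alpha_i\}}, P_{\Delta \setminus \{\alpha_i\}}]$, using that $\omega_i$ descends to an isomorphism $P_{\Delta \setminus \{\alpha_i\}}/[P_{\Delta \setminus \{\alpha_i\}}, P_{\Delta \setminus \{\alpha_i\}}] \xrightarrow{\sim} \mathbb{G}_m$ (which relies on simple-connectedness). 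Intersecting over $\alpha_i \notin \theta$ then yields $[P_\theta, P_\theta]$, combining $\bigcap_{\alpha_i \notin \theta} P_{\Delta \setminus \{\alpha_i\}} = P_\theta$ with the fact that $\{\omega_i\}_{\alpha_i \notin \theta}$ is a $\mathbb{Z}$-basis for the character lattice of $T/T_\theta$. Since $v^\theta \cdot t = t \cdot v^\theta$ for every $t \in T$, the right $T$-action on $v^\theta$ is realized by the left $G$-action, so the $G \times T$-orbit of $v^\theta$ coincides with its $G$-orbit $G/[P_\theta, P_\theta]$. To show these exhaust all $G \times T$-orbits, I would use the algebraic Peter--Weyl description $\ringOfFunctionsForBasicAffineSpace = \bigoplus_{\lambda \in \Lambda^+} V_\lambda^*$ with surjective Cartan multiplication: for $x \in \affineClosureofBasicAffineSpace$, the set $S_x := \{\lambda : \exists a \in A_\lambda,\ a(x) \neq 0\}$ is a face of $\Lambda^+$ (by integrality of $\ringOfFunctionsForBasicAffineSpace$ and surjectivity of Cartan multiplication), and since $\Lambda^+$ is freely generated as a monoid by $\{\omega_i\}$, its faces biject with subsets $\theta \subseteq \Delta$; the face $S_x$ then matches the $G \times T$-orbit of $x$ with the orbit of the corresponding $v^\theta$.

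\textbf{Smoothness at $|\theta| = 1$ strata.} For $\theta = \{\alpha\}$, by $G$-equivariance it suffices to show $\affineClosureofBasicAffineSpace$ is smooth at $v^\theta$. The plan is to reduce to the $\SL_2$ case via a local product decomposition: \'etale-locally near the stratum, $\affineClosureofBasicAffineSpace$ should be modeled by $G \times^{P_\alpha} \overline{P_\alpha/N}$, and $\overline{P_\alpha/N}$ is, up to a torus factor arising from the center of the Levi $L_\alpha$, identified with the affine closure $\overline{\SL_2^\alpha/N_{\SL_2^\alpha}} \cong \mathbb{A}^2$. Both factors are smooth, so $\overline{P_\alpha/N}$ is smooth and hence $\affineClosureofBasicAffineSpace$ is smooth along $G/[P_\alpha, P_\alpha]$. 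The main technical obstacle is establishing this local product decomposition rigorously; I expect this is best handled via a Luna-type slice argument at $v^\theta$ with stabilizer $[P_\alpha, P_\alpha]$, reducing smoothness of $\affineClosureofBasicAffineSpace$ at $v^\theta$ to smoothness of the transverse slice, which can then be identified with $\mathbb{A}^2$ via the $\SL_2^\alpha$-subrepresentation of $V$.
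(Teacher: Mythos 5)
Your proof of quasi-affineness and your computation of the stabilizers of the vectors $v^\theta$ are both sound and follow essentially the same route the paper takes (via the embedding into $\bigoplus_i V_{\omega_i}$ and the identification of $[P_\theta,P_\theta]$ with the stabilizer of a sum of highest weight vectors, cf.\ the paper's \cref{Commutator of Parabolic is Stabilizer of Vector} and \cref{Stabilizer of Generic Vector is Intersection of Stabilizer of Fixed Vectors}). The two remaining steps, however, each contain a genuine gap.

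\textbf{Exhaustion of orbits.} You correctly observe that $S_x := \{\lambda : \exists a\in A_\lambda,\ a(x)\neq 0\}$ is a face of $\Lambda^+$, hence corresponds to a subset $\theta$, and that $v^\theta$ lies in the stratum $Z_\theta := \{x : S_x = \text{face of }\theta\}$. But this only partitions $\affineClosureofBasicAffineSpace$ into $G\times T$-stable locally closed pieces $Z_\theta$ each of which \emph{contains} the orbit $Gv^\theta$; it does not show each $Z_\theta$ is a \emph{single} $G\times T$-orbit, i.e.\ that every $x$ with $S_x$ the face of $\theta$ is $G$-conjugate to a sum of highest weight vectors. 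That is precisely the content the paper extracts from Kempf's collapsing construction in \cref{G Mod Commutator of Parabolic Is Qaff With Stratification}: the restricted representation map $G\times^{P_I} E_J\to V_J$ is proper, so its image is closed and, being normal with small boundary, equals $\affineClosureofBasicAffineSpace$; therefore \emph{every} point of $\affineClosureofBasicAffineSpace$ is of the form $g\cdot e$ with $e\in E_J$, from which the orbit count is immediate. Without some version of this surjectivity your face bookkeeping does not close the argument.

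\textbf{Smoothness along codimension-two strata.} You propose a Luna-slice reduction to $\overline{\SL_2/N_{\SL_2}}\cong\mathbb{A}^2$, and you flag the local product decomposition as the technical obstacle. The obstacle is real and in fact blocks this route as stated: Luna's \'etale slice theorem requires a \emph{reductive} stabilizer at the base point, whereas $\mathrm{Stab}_G(v^\theta)=[P_\alpha,P_\alpha]$ contains the unipotent radical of $P_\alpha$ and is far from reductive. The paper instead invokes the Kazhdan--Laumon rank-two symplectic vector bundle $f_\alpha: V_\alpha\to G/Q_\alpha$ (\cref{The Symplectic Vector Bundle}), whose total space is smooth and quasi-affine with $\mathcal{O}(V_\alpha)\xrightarrow{\sim} A$, so that the affinization map exhibits $V_\alpha$ as an open subscheme of $\affineClosureofBasicAffineSpace$ containing exactly the two strata $G/N$ and $G/[P_\alpha,P_\alpha]$ (\cref{Valpha is the expected two strata}, \cref{Smooth Locus Has What You Expected For Simply Connected}). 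This bypasses any slice argument entirely. If you want to pursue your local-model heuristic, you would need some substitute for Luna that works for non-reductive stabilizers, or you would need to reproduce the Kazhdan--Laumon construction.
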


The first sentence of \cref{Stratification of Affine Closure of Basic Affine Space by Partial Basic Affine Spaces Proposition}, whose proof heavily uses ideas of \cite{KempfOnTheCollapsingofHomogeneousBundles}, also holds by similar methods in a more general context where $N = [B, B]$ is replaced by the commutator of an arbitrary parabolic subgroup, see \cref{G Mod Commutator of Parabolic Is Qaff With Stratification}. We use this generalization to prove the second sentence of \cref{Stratification of Affine Closure of Basic Affine Space by Partial Basic Affine Spaces Proposition}. 
\subsubsection{Commutator of Parabolic}\label{Commutator of Parabolic Subsubsection} Choose a subset $I$ of the set of simple coroots $\Delta^{\vee}$ and let $J$ denote its complement. For each simple coroot $c$, we let $\omega_c \in \characterlatticeforT$ denote the fundamental weight dual to $c$ and choose a nonzero highest weight vector $\vec{v}_c$ in the irreducible representation $V_c$ of highest weight $\omega_c$. (Each $\vec{v}_i$ is of course unique up to nonzero scalar multiple and our constructions will be independent of our choices of $\vec{v}_i$.)  If we let $L_j$ denote the line spanned by $\vec{v}_j$ for every $j \in J$, then $P_{\Delta^{\vee}\setminus \{j\}}$ is stabilizer of $L_j$, and moreover $P_{I} = \cap_{j \in J} P_{\Delta^{\vee}\setminus \{j\}}$, both of which can be seen from for example the fact that any subgroup of $G$ containing our choice of Borel subgroup is a standard parabolic subgroup \cite[Theorem 29.3(a)]{HumphreysLinearAlgebraicGroups} by computing the simple root vectors in the Lie algebra of the group. (In particular, $P_{\emptyset} = B$.) When $I$ contains a single element $i$ we will also let $P_{i} := P_{\{i\}}$ and, when there is no danger of confusion, we will also denote $P_i$ by $P_{\alpha}$ where $\alpha$ is the simple root associated to $i$. 

We also define the $G$-representation $V_J := \oplus_{j \in J} V_j$, and let $E_J := \text{span}_{V_J}\{\vec{v}_j : j \in J\}$ which is a subspace of $V_J$ which is closed under the action of $P_I$ and has dimension $|J|$. We naturally obtain a representation \begin{equation}\label{Representation of Parabolic on Product of Lines} \rho_I: P_I \to \text{Aut}(E_J) = \mathbb{G}_m^{|J|}\end{equation} of $P_I$ whose restriction along the map \begin{equation}\label{Definition of alphasubJvee}\alpha_J^{\vee} := \prod_{\alpha^{\vee} \in J}\alpha^{\vee}: \mathbb{G}_m^{|J|} \to P_I\end{equation} is an isomorphism. Therefore $\alpha_J^{\vee}$ is a closed embedding, and moreover if we denote the image of $\alpha_J^{\vee}$ by $\mathbb{G}_m^J$ we see that by for example \cite[Proposition 2.34]{MilneAlgebraicGroupsBook} we have a semidirect product decomposition \begin{equation}\label{Semidirect Product Decomposition for Parabolics}P_I \xleftarrow{\sim} Q_I \rtimes \mathbb{G}_m^J\end{equation} where $Q_I$ denotes the kernel of the map of \labelcref{Representation of Parabolic on Product of Lines}, or equivalently the intersection of stabilizers of the $\vec{v}_j$ for $j \in J$. Moreover, since $\mathbb{G}_m^{|J|}$ is abelian, $Q_I$ contains $[P_I, P_I]$ and, since $[P_I, P_I]$ and $Q_I$ are affine (as they are closed subgroup schemes of $G$), the inclusion map $[P_I, P_I] \xhookrightarrow{} Q_I$ is a closed embedding. 

\begin{Proposition}\label{Commutator of Parabolic is Stabilizer of Vector}
    The closed embedding $[P_I, P_I] \xhookrightarrow{} Q_I$ is an isomorphism.
\end{Proposition}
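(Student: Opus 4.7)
The plan is to show the reverse containment $Q_I \subseteq [P_I, P_I]$, as the inclusion $[P_I, P_I] \subseteq Q_I$ already follows from the fact that $P_I/Q_I \cong \mathbb{G}_m^{|J|}$ is abelian. I would use the Levi decomposition $P_I \cong L_I \ltimes U_I$, where $L_I \supseteq T$ is the standard Levi and $U_I$ is the unipotent radical. Since $\rho_I \circ \alpha_J^{\vee}$ is an isomorphism and $\alpha_J^{\vee}$ factors through $L_I$, the restriction $\rho_I|_{L_I} : L_I \twoheadrightarrow \mathbb{G}_m^{|J|}$ is surjective; combined with $U_I \subseteq Q_I$ (any homomorphism from a unipotent group to a torus is trivial), this yields $Q_I = (Q_I \cap L_I) \cdot U_I$. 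The task therefore reduces to proving (i) $U_I \subseteq [P_I, P_I]$ and (ii) $Q_I \cap L_I = [L_I, L_I]$.

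Step (i) is a standard calculation: $U_I$ is generated by root subgroups $U_\beta$ for positive roots $\beta$ outside the root system of $L_I$, and each such $U_\beta$ equals $[T, U_\beta]$ via the identity $[t, u_\beta(x)] = u_\beta((\beta(t) - 1)x)$ applied to $t \in T$ with $\beta(t) \neq 1$. For step (ii), I would first observe that $L_I = T \cdot [L_I, L_I]$ (since $L_I$ is generated by $T$ together with the root subgroups $U_{\pm \alpha_i}$ for $i \in I$, and the latter lie in $[L_I, L_I]$) and that $\rho_I$ vanishes on $[L_I, L_I]$; writing $l \in L_I \cap Q_I$ as $l = t s$ with $t \in T$ and $s \in [L_I, L_I]$, this reduces (ii) to showing $Q_I \cap T \subseteq [L_I, L_I]$. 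Here the simply-connected hypothesis enters decisively: it ensures that the simple coroots assemble into an isomorphism $\prod_{k \in \Delta^{\vee}} \alpha_k^{\vee} : \mathbb{G}_m^{|\Delta^{\vee}|} \xrightarrow{\sim} T$, and since $\langle \omega_j, \alpha_k^{\vee} \rangle = \delta_{jk}$, the map $\rho_I|_T$ becomes projection onto the $J$-coordinates under this identification. Hence $Q_I \cap T = \prod_{i \in I} \alpha_i^{\vee}(\mathbb{G}_m)$, which is the maximal torus of $[L_I, L_I]$ sitting inside $T$ and is in particular contained in $[L_I, L_I]$.

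The only non-formal input is the character-lattice identification used in (ii), and this is precisely where the assumption that $G$ is simply connected is essential: without it, the simple coroots need not span the cocharacter lattice of $T$, and $Q_I \cap T$ can then strictly exceed the maximal torus of $[L_I, L_I]$. The remaining ingredients -- the Levi decomposition of $P_I$, surjectivity of $\rho_I|_{L_I}$, and the inclusion $U_I \subseteq [P_I, P_I]$ -- are standard structure theory of reductive groups.
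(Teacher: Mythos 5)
Your proof is correct and takes a genuinely different route from the paper's. The paper's argument reduces the problem to tangent spaces: it first observes that $Q_I$ is connected (using the variety isomorphism $Q_I \cong P_I/\mathbb{G}_m^J$ from the semidirect product decomposition) and hence irreducible by Cartier's theorem, so it suffices to match Lie algebras; it then shows $\mathrm{Lie}([P_I,P_I])$ contains $\mathfrak{n}$ together with each $h_{\alpha_i}$ and $f_{\alpha_i}$ for $i \in I$, which together generate a Lie subalgebra filling up $\mathrm{Lie}(Q_I)$. You instead work entirely at the group level, using the Levi decomposition $P_I = L_I \ltimes U_I$ to split the problem into $U_I \subseteq [P_I,P_I]$ and $Q_I \cap L_I = [L_I,L_I]$, and then reducing the latter to a computation in the cocharacter lattice of $T$. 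Both arguments rely on $G$ being simply connected in order for $\rho_I$ to be defined at all (the fundamental weights must be characters of $T$), but you lean on it a second time to identify $T$ with $\mathbb{G}_m^{|\Delta^\vee|}$ via the simple coroots and thereby pin down $Q_I \cap T$ exactly; the paper's Lie-algebraic version of this step is automatic because the differentials of the fundamental weights are always linearly independent in $\mathfrak{t}^*$, so the tangent-space dimension count needs no further lattice input. The paper's route is thus a bit more economical in how it uses the hypothesis, while yours is more explicit about which elements of $Q_I$ realize the commutator structure, and it avoids the smoothness/irreducibility dévissage entirely. One small remark: you don't actually need the claim that $\prod_{i\in I}\alpha_i^\vee(\mathbb{G}_m)$ is the \emph{full} maximal torus of $[L_I,L_I]$ — containment in $[L_I,L_I]$ (which holds since each $\alpha_i^\vee(\mathbb{G}_m)$ is the image of a diagonal torus under an $\SL_2\to L_I$ landing in the derived group) is all that is used.
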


\begin{proof}
    By \labelcref{Semidirect Product Decomposition for Parabolics} we have an isomorphism of varieties $P_I/\mathbb{G}_m^J \xleftarrow{\sim} Q_I$, so $Q_I$ is connected. A theorem of Cartier gives any affine algebraic group in characteristic zero is smooth (see for example \cite[Theorem 3.23]{MilneAlgebraicGroupsBook}) and so any connected affine algebraic group over our characteristic zero field is in particular irreducible; it thus suffices to show that this closed embedding induces an equality of the Lie algebras of $[P_I, P_I]$ and $Q_I$. Moreover, both $[P_I, P_I]$ and $Q_I$ are normalized by $P_I$ so their Lie algebras in particular admit $T$-representations. However, the Lie algebra of $[P_I, P_I]$ contains all of $\mathfrak{n} = [\mathfrak{b}, \mathfrak{b}]$ and each $h_{\beta} := [e_{\beta}, f_{\beta}]$ and each $f_{\beta} = -2[h_{\beta}, f_{\beta}]$ for every simple coroot $\beta^{\vee} \in I$ where $e_{\beta} \in \LN_{\beta}$ and $f_{\beta} \in \LN^-_{\beta}$ yield an $\LSL_2$-triple. Therefore the tangent spaces of both groups of \cref{Commutator of Parabolic is Stabilizer of Vector} are equal, as desired.
\end{proof}

We can use similar methods to the proof of \cref{Commutator of Parabolic is Stabilizer of Vector} to obtain an explicit description of the $G$-stabilizer of any vector $\vec{v} \in E_J$. By the semidirect product decomposition of \labelcref{Semidirect Product Decomposition for Parabolics}, to compute the stabilizer of any vector it suffices to compute the stabilizer of $\vec{v}_{\tilde{J}} := \sum_{j \in \tilde{J}}\vec{v}_j$ for any fixed subset $\tilde{J} \subseteq J$. We describe this in terms of $\tilde{I} := \Delta^{\vee}\setminus \tilde{J}$:

\begin{Corollary}\label{Stabilizer of Generic Vector is Intersection of Stabilizer of Fixed Vectors}
    The stabilizer $S_{\vec{v}_{\tilde{J}}}$ of $\vec{v}_{\tilde{J}}$ is $Q_{\tilde{I}}$. 
\end{Corollary}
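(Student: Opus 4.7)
The plan is to prove the two inclusions separately. The containment $Q_{\tilde{I}} \subseteq S_{\vec{v}_{\tilde{J}}}$ is immediate from the definitions: $Q_{\tilde{I}}$ is by construction the intersection inside $P_{\tilde{I}}$ of the stabilizers of the individual vectors $\vec{v}_j$ for $j \in \tilde{J}$, so any element of $Q_{\tilde{I}}$ fixes every $\vec{v}_j$ and hence their sum $\vec{v}_{\tilde{J}}$.

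The substance is in the reverse inclusion, and the key observation is that $V_{\tilde{J}} = \bigoplus_{j \in \tilde{J}} V_j$ is a decomposition into $G$-subrepresentations, so any $g \in G$ acts preserving each summand. Consequently, the single equation $g \cdot \vec{v}_{\tilde{J}} = \vec{v}_{\tilde{J}}$ splits component-wise into the system $g \cdot \vec{v}_j = \vec{v}_j$ for every $j \in \tilde{J}$. From this, $g$ in particular stabilizes each line $L_j$, so
\[
g \in \bigcap_{j \in \tilde{J}} P_{\Delta^{\vee} \setminus \{j\}} = P_{\tilde{I}};
\]
combined with the individual stabilizing equations inside $P_{\tilde{I}}$, this places $g$ in $\ker(\rho_{\tilde{I}}) = Q_{\tilde{I}}$.

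I do not anticipate a serious obstacle. Unlike the proof of the preceding proposition, no Lie-algebra or smoothness argument is required, because the decomposition of the defining equation $g \cdot \vec{v}_{\tilde{J}} = \vec{v}_{\tilde{J}}$ into the system $\{g \cdot \vec{v}_j = \vec{v}_j\}_{j \in \tilde{J}}$ identifies the defining equations of the two closed subgroup schemes of $G$ directly, giving the equality at the scheme-theoretic level without further input. The one point worth articulating carefully is that this splitting genuinely uses the $G$-stability of each individual $V_j$ inside $V_{\tilde{J}}$, not merely an abstract vector-space direct sum decomposition.
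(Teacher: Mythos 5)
Your proof is correct and uses the same key idea as the paper's: the $G$-equivariant decomposition $V_{\tilde J}=\bigoplus_{j\in\tilde J}V_j$ splits the single condition $g\cdot\vec v_{\tilde J}=\vec v_{\tilde J}$ into the component-wise conditions $g\cdot\vec v_j=\vec v_j$, which first places $g$ in $\bigcap_j P_{\Delta^\vee\setminus\{j\}}=P_{\tilde I}$ and then in $\ker(\rho_{\tilde I})=Q_{\tilde I}$. The paper packages this as a chain of equalities using the identity $S_{\vec v_j}=Q_{\Delta^\vee\setminus\{j\}}$, while you phrase it as two inclusions and unfold the definitions directly, but the substance is identical.
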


\begin{proof}
Letting $S_{\vec{v}_{j}}$ denote the stabilizer of $\vec{v}_j$, we have \[S_{\vec{v}_{\tilde{J}}} = \cap_{j \in \tilde{J}} S_{\vec{v}_{j}}\] since $V_{\tilde{J}} = \oplus_{j \in \tilde{J}} V_j$. 
Since the stabilizer of a vector is contained in the stabilizer of the line it spans, the stabilizer of $\vec{v}_j$ in $G$ is $Q_{\Delta^{\vee}\setminus\{j\}}$ for any $j \in \Delta^{\vee}$; in other words, $S_{\vec{v}_j} = Q_{\Delta^{\vee}\setminus \{j\}}$. Therefore \[\cap_{j \in \tilde{J}}S_{\vec{v}_j} = \cap_{j \in \tilde{J}}Q_{\Delta^{\vee}\setminus\{j\}} \subseteq \cap_{j \in \tilde{J}}P_{\Delta^{\vee}\setminus\{j\}} = P_I\] and so in particular we see that \[P_I \cap \bigcap_{j \in \tilde{J}}S_{\vec{v}_{\tilde{j}}} = \cap_{j \in \tilde{J}}S_{\vec{v}_j}.\] However, the kernel $Q_{\tilde{I}}$ of the representation $\rho_{\tilde{I}}$ is evidently the intersection \[\cap_{j \in \tilde{J}} (S_{\vec{v}_{\tilde{j}}} \cap P_I) = P_I \cap \bigcap_{j \in \tilde{J}}S_{\vec{v}_{\tilde{j}}}\] so our claim follows from combining these above equalities. 
\end{proof}

\begin{Remark}
    We thank the referee for suggesting the above proof of \cref{Stabilizer of Generic Vector is Intersection of Stabilizer of Fixed Vectors}, which is more simple than the proof originally given by the author. 
\end{Remark}

\subsubsection{Stratification of Affine Closure of $G/[P_I, P_I]$} From \cref{Commutator of Parabolic is Stabilizer of Vector}, we can now show the following claim, which when $I = \emptyset$ recovers the first sentence of \cref{Stratification of Affine Closure of Basic Affine Space by Partial Basic Affine Spaces Proposition}: 
\begin{Proposition}\label{G Mod Commutator of Parabolic Is Qaff With Stratification}
     The variety $G/[P_I, P_I]$ is quasi-affine. Moreover, we have a $G \times T$-equivariant stratification of its affine closure \begin{equation}
    \label{Stratification of Affine Closure of COMMUTATOR OF PARABOLIC by Partial Basic Affine Spaces}  \overline{G/[P_I, P_I]} =  \bigcup_{K \supseteq I}G/[P_K, P_K]
    \end{equation} such that $G/[P_{K'}, P_{K'}]$ is in the closure of $G/[P_K, P_K]$ if and only if $K \subseteq K'$. 
\end{Proposition}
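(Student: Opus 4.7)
The plan is to realize $G/[P_I, P_I]$ as a $G$-orbit in the $G$-module $V_J = \oplus_{j\in J}V_j$ and then identify its affine closure with the closure of that orbit, using Kempf-style collapsing arguments. By \cref{Commutator of Parabolic is Stabilizer of Vector} we have $[P_I,P_I]=Q_I$, and by \cref{Stabilizer of Generic Vector is Intersection of Stabilizer of Fixed Vectors} applied with $\tilde I = I$ the $G$-stabilizer of $\vec{v}_J := \sum_{j\in J}\vec{v}_j$ is exactly $Q_I = [P_I,P_I]$. Thus the orbit map $g[P_I,P_I]\mapsto g\cdot \vec{v}_J$ is a $G$-equivariant locally closed embedding $G/[P_I,P_I]\hookrightarrow V_J$, and since $V_J$ is affine this shows $G/[P_I,P_I]$ is quasi-affine.

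To describe the closure $X := \overline{G\cdot \vec{v}_J}\subseteq V_J$, I would show $X=\bigsqcup_{\tilde J\subseteq J} G\cdot \vec{v}_{\tilde J}$. For each $\tilde J\subseteq J$, the cocharacter $\nu_{\tilde J} := \sum_{j\in J\setminus\tilde J}\alpha_j^\vee$ is an integral cocharacter (using that $G$ is simply connected, so the simple coroots form a $\mathbb{Z}$-basis for $X_\bullet(T)$) satisfying $\lim_{t\to 0}\nu_{\tilde J}(t)\cdot \vec{v}_J = \vec{v}_{\tilde J}$, so $G\cdot \vec{v}_{\tilde J}\subseteq X$. By \cref{Stabilizer of Generic Vector is Intersection of Stabilizer of Fixed Vectors} this orbit is identified with $G/[P_{\tilde I},P_{\tilde I}]$ for $\tilde I = \Delta^\vee\setminus\tilde J$. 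Conversely, the surjectivity of $\rho_I\colon P_I\twoheadrightarrow \mathbb{G}_m^{|J|}$ gives that every $v\in E_J$ with support $\tilde J$ lies in $P_I\cdot \vec{v}_{\tilde J}$, and the collapsing map $m\colon G\mathop{\times}\limits^{P_I}E_J\to V_J$, $(g,v)\mapsto g\cdot v$, is proper (factoring through $G/P_I\times V_J$ with $G/P_I$ projective) with image $G\cdot E_J$; combining these observations yields the claimed decomposition.

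The main remaining step is to identify $X$ with the affine closure $\overline{G/[P_I,P_I]}$. A direct dimension count gives $\dim(G\mathop{\times}\limits^{P_I}E_J) = \dim G - \dim P_I + |J| = \dim G - \dim [P_I,P_I] = \dim X$, and checking the fiber of $m$ at $\vec{v}_J$ using the transitive $P_I$-action on the open part of $E_J$ shows that $m$ is birational. Since $G\mathop{\times}\limits^{P_I}E_J$ is a vector bundle over the smooth projective $G/P_I$ and hence smooth, the main theorem of \cite{KempfOnTheCollapsingofHomogeneousBundles} then yields that $X$ has rational singularities and in particular is normal. From the dimension formula $\dim G\cdot \vec{v}_{\tilde J} = \dim G - \dim P_{\tilde I} + |\tilde J|$, each $G\cdot \vec{v}_{\tilde J}$ has codimension $2(|J|-|\tilde J|)$ in $X$, so the complement of the open orbit has codimension at least two in the normal variety $X$, whence $k[X] = k[G\cdot\vec{v}_J] = k[G/[P_I,P_I]]$ and $X = \overline{G/[P_I,P_I]}$.

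The closure relations then follow immediately: under the identification $G/[P_K,P_K]\leftrightarrow G\cdot \vec{v}_{\Delta^\vee\setminus K}$, the inclusion $G/[P_{K'},P_{K'}]\subseteq \overline{G/[P_K,P_K]}$ corresponds to $\vec{v}_{\Delta^\vee\setminus K'}\in \overline{G\cdot \vec{v}_{\Delta^\vee\setminus K}}$, which by the cocharacter limit argument holds iff $\Delta^\vee\setminus K'\subseteq \Delta^\vee\setminus K$, i.e., $K\subseteq K'$. Finally, $G\times T$-equivariance is automatic: right $T$-translation commutes with left $G$-translation, so each stratum (which is a $G$-orbit) is preserved by right $T$. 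The main obstacle I expect is the normality of $X$, which requires a careful application of Kempf's collapsing theorem---in particular, the verification that $m$ is birational despite the $P_I$-stabilizer of the generic point of $E_J$ being the nontrivial subgroup $[P_I,P_I]$ rather than the identity.
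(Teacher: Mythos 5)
Your proposal follows essentially the same route as the paper: realize $G/[P_I,P_I]$ as the orbit of $\vec{v}_J$, show the collapsing map $m\colon G\mathop{\times}\limits^{P_I}E_J\to V_J$ is proper, decompose the image into the orbits $G\cdot\vec{v}_{\tilde J}$, invoke \cite{KempfOnTheCollapsingofHomogeneousBundles} for normality of the image $X$, and conclude $X = \overline{G/[P_I,P_I]}$ because the open orbit's complement has codimension $\geq 2$. The paper obtains the orbit decomposition by describing the $\mathbb{G}_m^J$-orbits on $E_J$ and then sweeping by $G$, whereas you produce the degenerations by explicit cocharacter limits; the paper invokes Kempf's Proposition 1 directly whereas you verify birationality of $m$ first. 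Both are valid.

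One quantitative claim in your write-up is not correct, though it does not damage the argument. You assert that $G\cdot\vec{v}_{\tilde J}$ has codimension exactly $2(|J|-|\tilde J|)$ in $X$. From the formula you quote, the codimension equals $(\dim P_{\tilde I}-\dim P_I)+(|J|-|\tilde J|)$, and $\dim P_{\tilde I}-\dim P_I$ is in general strictly larger than $|J|-|\tilde J|$; already in $\SL_5$ with $I=\emptyset$ and $\tilde I=\{\alpha_1^\vee,\alpha_2^\vee,\alpha_3^\vee\}$ one has $\dim P_{\tilde I}-\dim P_I=6$ while $|J|-|\tilde J|=3$, so the true codimension is $9$, not $6$. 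The correct statement is that the codimension is \emph{at least} $2(|J|-|\tilde J|)$, which is all you need; the paper only uses (and only proves, via the $\SL_2$-Levi comparison of $T$-weight spaces) the inequality $\dim Q_{I\cup\{j\}}\geq\dim Q_I+2$. Also, your parenthetical that simple connectedness is needed for $\alpha_j^\vee\in X_\bullet(T)$ is off: simple coroots are always integral cocharacters; the simply connected hypothesis in this section is instead what places the fundamental weights inside $\characterlatticeforT$.
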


\begin{proof}
   Notice that the representation morphism $\rho: G \times V_J \xrightarrow{} V_J$ induces a map $\tilde{\rho}: G \mathop{\times}\limits^{P_I} V_J \to V_J$ and so we obtain an induced map \[\tilde{\rho}|_{E_J}: G \mathop{\times}\limits^{P_I} E_J \to V_J\] since $E_J \subseteq V_J$ is $P_I$-stable. We first claim that $\tilde{\rho}|_{E_J}$ is proper. To see this, notice that, if $\phi$ denotes the isomorphism $G \mathop{\times}\limits^{P_I} V_J \xrightarrow{\sim} G/P_I \times V_J$ given by $\phi(g, v) := (gP, gv)$, we have $\text{proj}_{V_J}\phi = \tilde{\rho}$. This shows that $\tilde{\rho}$ is proper, and, since $\tilde{\rho}|_{E_J}$ is the restriction to the closed subvariety $G \mathop{\times}\limits^{P_I} E_J$, we see $\tilde{\rho}|_{E_J}$ is proper as well. In particular, the image $X_I$ of $\tilde{\rho}|_{E_J}$ is closed. Therefore, $X_I$ is affine and every closed point of $X_I$ can be written as $g\vec{v}$ for some closed point $\vec{v} \in E_J(k)$. 
    
    Now, notice the $P_I$-orbits of $E_J$ are equivalently given by the $\mathbb{G}_m^J$-orbits on $E_J$. Since we have a $\mathbb{G}_m^J$-equivariant isomorphism $E_J \cong \prod_{j \in J}L_j$ we may explicitly compute the $\mathbb{G}_m^J$-orbits on this space and we see that the $\mathbb{G}_m^J$-orbits on $E_J$ are precisely of the form \[\mathbb{O}_{K'} := \prod_{j \in K'}\{0\} \times \prod_{j \in J\setminus K'} L_j\setminus \{0\}\] for some $K' \subseteq J$, such that $\mathbb{O}_{K'}$ lies in the closure of $\mathbb{O}_{K''}$ if and only if $K' \supseteq K''$. Now, using \labelcref{Semidirect Product Decomposition for Parabolics}, we see that $G\mathbb{O}_{K'}$ is equivalently the $G$-orbit of $\sum_{j \in J\setminus K'}\vec{v}_{j}$ and therefore $G\mathbb{O}_{K'} \cong G/Q_{I \cup K'}$ by \cref{Stabilizer of Generic Vector is Intersection of Stabilizer of Fixed Vectors}. We therefore have $G\mathbb{O}_{K'} = G/[P_{I \cup K'}, P_{I \cup K'}]$ by \cref{Commutator of Parabolic is Stabilizer of Vector}, which gives the stratification \begin{equation}\label{Stratification of Image of Map}X_I = \bigcup_{K \supseteq I}G/[P_K, P_K]\end{equation} with the closure relations as in \cref{G Mod Commutator of Parabolic Is Qaff With Stratification}. 
    
    The variety $X_I$ is normal by \cite[Proposition 1]{KempfOnTheCollapsingofHomogeneousBundles}. We claim that the complement of $G/Q_I$ in $X_I$ is a codimension two subset. Indeed, this follows from the fact that for any coroot $j \in J$ the Lie algebra of any $Q_{I \cup \{j\}}$ contains a Levi factor $\SL^j_2$ of $P_{j}$ and so, comparing the $T$-weight spaces of the Lie algebras, we see that $\text{dim}(Q_{I \cup \{j\}}) \geq \text{dim}(Q_{I}) + 2$ and so $\text{dim}(G/Q_{I \cup \{j\}}) \leq \text{dim}(G/Q_{I}) - 2$. Therefore since $X_I$ is a normal affine variety (it is a closed subscheme of $E$), we have that $X_I$ is the affine closure of $G/Q_I$ and so our stratification \labelcref{Stratification of Image of Map} gives our claim. 
\end{proof}

\subsubsection{The Symplectic Vector Bundle}\label{The Symplectic Vector Bundle} Recall that in \cite[Section 2.1]{KazhdanLaumonGluingofPerverseSheavesandDiscreteSeriesRepresentation}, the authors choose an $\SL_2$-triple associated to a fixed simple root $\alpha$ and construct a certain rank two symplectic vector bundle $f_{\alpha}: V_{\alpha} \to G/Q_{\alpha}$ such that $V_{\alpha}$ admits an action of $G \times T$ for which the map $f_{\alpha}$ is equivariant for this action and such that the complement of the zero section of $V_{\alpha}$ can be identified with $G/N$. In particular, the variety $T^*(V_{\alpha})$ has an endomorphism given by the \textit{symplectic Fourier transform} \[S_{\alpha}: T^*(V_{\alpha}) \to T^*(V_{\alpha})\] see for example \cite[Appendix B]{GinzburgRicheDifferentialOperatorsOnBasicAffineSpaceandtheAffineGrassmannian} where, in the notation of \cite{GinzburgRicheDifferentialOperatorsOnBasicAffineSpaceandtheAffineGrassmannian}, we specialize to $\hbar = 0$. This notation is justified as the composite endomorphism (read left to right) \[\ringOfFunctionsForCOTANGENTBUNDLEOfBasicAffineSpace \xleftarrow{\sim} \mathcal{O}(T^*(V_{\alpha})) \xrightarrow{\sim} \mathcal{O}(T^*(V_{\alpha})) \xrightarrow{\sim} \ringOfFunctionsForCOTANGENTBUNDLEOfBasicAffineSpace\] induced by the restriction maps (which are equivalences since the complement of $T^*(G/N)$ in $T^*(V_{\alpha})$ has codimension two) and pulling back by $S_{\alpha}$ is, by definition, the automorphism $s_{\alpha}$ of $R$ given by the Gelfand-Graev action, see \cite{GinzburgRicheDifferentialOperatorsOnBasicAffineSpaceandtheAffineGrassmannian}.

\begin{Corollary}\label{KL Vector Bundle is Quasiaffine}
    The vector bundle $V_{\alpha}$ is quasi-affine. 
\end{Corollary}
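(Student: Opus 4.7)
The plan is to realize $V_\alpha$ as an open subscheme of $\affineClosureofBasicAffineSpace$. Since $V_\alpha$ is a rank-two vector bundle over the smooth homogeneous space $G/Q_\alpha$, where $Q_\alpha := [P_\alpha, P_\alpha]$, it is smooth, and the complement of $G/N$ in $V_\alpha$ is the zero section, which is identified with $G/Q_\alpha$ and has codimension two. Consequently the restriction $\mathcal{O}(V_\alpha)\to\mathcal{O}(G/N) = \ringOfFunctionsForBasicAffineSpace$ is an isomorphism, producing a canonical $G\times T$-equivariant morphism $\phi\colon V_\alpha\to\affineClosureofBasicAffineSpace$ that extends the open inclusion $G/N\hookrightarrow \affineClosureofBasicAffineSpace$; in particular $\phi$ is birational.

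The set $U := G/N\sqcup G/Q_\alpha\subseteq \affineClosureofBasicAffineSpace$ is open: its complement $\bigcup_{K\ne\emptyset,\,K\ne\{\alpha\}} G/[P_K,P_K]$ is closed by \cref{Stratification of Affine Closure of Basic Affine Space by Partial Basic Affine Spaces Proposition}, since any such $K$ contains a simple root $\ne \alpha$ and so does any $K'\supseteq K$. Moreover $U$ lies in the smooth locus of $\affineClosureofBasicAffineSpace$ by the last sentence of that proposition. By $G$-equivariance, the zero section of $V_\alpha$ maps onto some $G$-orbit $G/[P_\theta,P_\theta]$ with $\theta\supseteq\{\alpha\}$; I will rule out $\theta\supsetneq\{\alpha\}$ by analyzing the induced graded ring map $\ringOfFunctionsForBasicAffineSpace\to \mathcal{O}(G/Q_\alpha)$. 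Since $\alpha^\vee(\mathbb{G}_m)\subseteq Q_\alpha$, the map is zero on $\ringOfFunctionsForBasicAffineSpace_\lambda$ unless $\langle\lambda,\alpha^\vee\rangle = 0$; for such $\lambda$, both $\ringOfFunctionsForBasicAffineSpace_\lambda$ and $\mathcal{O}(G/Q_\alpha)_\lambda$ are isomorphic to the irreducible $G$-representation $V(\lambda)^*$, so the $G$-equivariant map between them is a scalar by Schur's lemma. To see this scalar is nonzero, I compute the limit $\lim_{t\to 0}\alpha^\vee(t)\cdot eN$, which by the construction of the $\mathbb{G}_m$-action contracts the fiber of $V_\alpha$ over $eQ_\alpha$ to its origin and so lies on the zero section; evaluating a highest-weight vector $a\in \ringOfFunctionsForBasicAffineSpace_\lambda$ at this limit gives $a(eN)\ne 0$. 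Hence the ring map is surjective onto $\mathcal{O}(G/Q_\alpha)$, and the zero section maps isomorphically onto the stratum $G/Q_\alpha\subseteq\affineClosureofBasicAffineSpace$, so the image of $\phi$ is precisely $U$.

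Finally, $\phi\colon V_\alpha\to U$ is a $G$-equivariant morphism that is birational and bijective on closed points between smooth irreducible varieties of the same dimension. By Zariski's main theorem, any birational quasi-finite separated morphism to a normal variety is an open immersion, so $\phi$ is an isomorphism $V_\alpha\xrightarrow{\sim} U$, realizing $V_\alpha$ as an open subscheme of the affine variety $\affineClosureofBasicAffineSpace$. The main obstacle is ruling out that the zero section could collapse onto a deeper stratum of $\affineClosureofBasicAffineSpace$; the Schur's lemma plus $\mathbb{G}_m$-limit argument given above handles this.
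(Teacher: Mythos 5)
Your route is quite different from the paper's and far more elaborate. The paper's proof is essentially a one-liner: the bundle projection $f_\alpha\colon V_\alpha\to G/Q_\alpha$ is affine, $G/Q_\alpha$ is quasi-affine by \cref{G Mod Commutator of Parabolic Is Qaff With Stratification}, and a composition of quasi-affine morphisms is quasi-affine. You instead construct the open immersion $V_\alpha\hookrightarrow\affineClosureofBasicAffineSpace$ directly, which is really the content of the subsequent \cref{Valpha is the expected two strata}. That is not inherently wrong --- proving the stronger statement would give quasi-affineness for free --- but two genuine problems appear as written.

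\textbf{Circularity.} You appeal to ``the last sentence of that proposition'' to conclude that $G/[P_\alpha,P_\alpha]$ lies in the smooth locus of $\affineClosureofBasicAffineSpace$. But in the paper that last sentence is \emph{not} established at this point: it is proved only in \cref{Smooth Locus Has What You Expected For Simply Connected}, which is deduced from \cref{Valpha is the expected two strata}, which is deduced from the very corollary you are proving. So as written the argument is circular. It is fixable, because Zariski's main theorem only needs the target to be normal, not smooth, and $U$ is an open subscheme of the normal variety $\affineClosureofBasicAffineSpace$ (normality is established in \cref{affineClosureOfCotangentBundleofBasicAffineSpace is normal and Q-factorial for all G and is factorial when G is simply connected} and its surroundings); so you should cite normality of $\affineClosureofBasicAffineSpace$ rather than the smoothness statement. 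You should also not need $U$ and $V_\alpha$ to have the same dimension a priori --- once ZMT gives an open immersion, bijectivity on closed points forces it to be onto.

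\textbf{The Schur's-lemma nonvanishing step.} You claim that evaluating a highest-weight vector $a\in\ringOfFunctionsForBasicAffineSpace_\lambda$ at $eN$ gives a nonzero value. With the standard normalization that $\text{ev}_{eN}$ is the highest-weight line in $V(\lambda)=(\ringOfFunctionsForBasicAffineSpace_\lambda)^*$, a highest-weight vector of $\ringOfFunctionsForBasicAffineSpace_\lambda\cong V(\lambda)^*$ pairs with the \emph{lowest}-weight vector of $V(\lambda)$, so it vanishes at $eN$ (already for $\SL_2$: the $N$-fixed linear form $y$ on $\mathbb{A}^2\setminus\{0\}$ vanishes at $(1,0)$). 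You should instead take the lowest-weight vector, or simply any $a\in\ringOfFunctionsForBasicAffineSpace_\lambda$ with $a(eN)\ne 0$, which exists since evaluation at $eN$ is a nonzero functional; since the map $\ringOfFunctionsForBasicAffineSpace_\lambda\to\mathcal O(G/Q_\alpha)_\lambda$ is a scalar by Schur's lemma, any nonvanishing witness suffices. With this repair and the circularity removed, your approach does go through and has the virtue of proving \cref{KL Vector Bundle is Quasiaffine} and \cref{Valpha is the expected two strata} simultaneously, at the cost of considerably more work than the paper's two-step composition argument and of requiring one to check the contraction behaviour of $\alpha^\vee$ on the fibre of $V_\alpha$, which the short proof never needs.
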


\begin{proof}
    This is a standard argument (see, for example, the proof of \cite[Lemma 3.21]{GannonUniversal}) but since the argument is short we repeat it for the convenience of the reader. The morphism $f_{\alpha}$ is affine as $V_{\alpha}$ is a vector bundle, and therefore quasi-affine. Since the terminal map from $G/[P_{\alpha}, P_{\alpha}]$ is quasi-affine by \cref{G Mod Commutator of Parabolic Is Qaff With Stratification}, our claim follows from the fact that composition of quasi-affine morphisms is quasi-affine \cite[Lemma 29.13.4]{StacksProject}.
\end{proof}

\begin{Corollary}\label{Valpha is the expected two strata}
    In the stratification \labelcref{Stratification of Affine Closure of Basic Affine Space by Partial Basic Affine Spaces}, we can naturally identify the open subscheme $G/N \cup G/[P_{\alpha}, P_{\alpha}]$ with $V_{\alpha}$. 
\end{Corollary}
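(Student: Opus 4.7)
The plan is to construct a canonical open embedding $V_{\alpha} \hookrightarrow \affineClosureofBasicAffineSpace$ whose image is exactly $G/N \cup G/[P_{\alpha}, P_{\alpha}]$, by comparing affine closures.

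First, by \cref{KL Vector Bundle is Quasiaffine}, $V_{\alpha}$ is quasi-affine, so it is an open subscheme of its affinization. Next, I would show that the ring of functions on $V_{\alpha}$ agrees with $\ringOfFunctionsForBasicAffineSpace = \Gamma(G/N, \mathcal{O}_{G/N})$. By construction the complement of $G/N$ in $V_{\alpha}$ is the zero section of the rank two bundle $f_{\alpha}: V_{\alpha} \to G/Q_{\alpha}$, which has codimension two. Since $V_{\alpha}$ is a vector bundle over the smooth variety $G/Q_{\alpha}$, it is itself smooth, hence normal, and therefore the restriction map $\Gamma(V_{\alpha}, \mathcal{O}_{V_{\alpha}}) \to \Gamma(G/N, \mathcal{O}_{G/N}) = \ringOfFunctionsForBasicAffineSpace$ is an isomorphism. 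Consequently the affine closure of $V_{\alpha}$ coincides with $\affineClosureofBasicAffineSpace$, which provides a canonical open embedding $\iota: V_{\alpha} \hookrightarrow \affineClosureofBasicAffineSpace$ extending the given open embedding $G/N \hookrightarrow V_{\alpha}$.

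It remains to identify the image of $\iota$ with the indicated union of two strata. The embedding $\iota$ is $G$-equivariant (since $G$ acts on $V_{\alpha}$ compatibly with the $G$-action on $G/N$), so its image is a $G$-stable open subset of $\affineClosureofBasicAffineSpace$ containing $G/N$. The variety $V_{\alpha}$ consists of exactly two $G$-orbits, namely $G/N$ (the complement of the zero section) and the zero section, which by \cref{Commutator of Parabolic is Stabilizer of Vector} is $G$-equivariantly isomorphic to $G/[P_{\alpha}, P_{\alpha}]$. Thus the image of $\iota$ is a union of two $G$-orbits in $\affineClosureofBasicAffineSpace$, one of which is $G/N$ itself and the other of which is $G$-equivariantly isomorphic to $G/[P_{\alpha}, P_{\alpha}]$. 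Appealing to the stratification \labelcref{Stratification of Affine Closure of Basic Affine Space by Partial Basic Affine Spaces} and the fact that distinct standard commutator subgroups are non-conjugate in $G$, the only stratum $G$-isomorphic to $G/[P_{\alpha}, P_{\alpha}]$ is the one labeled by $\theta = \{\alpha\}$. Hence the image of $\iota$ is precisely $G/N \cup G/[P_{\alpha}, P_{\alpha}]$, which identifies $V_{\alpha}$ with this open subscheme.

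The main obstacle is the codimension-two extension step that identifies $\Gamma(V_{\alpha}, \mathcal{O}_{V_{\alpha}})$ with $\ringOfFunctionsForBasicAffineSpace$; once this and the $G$-orbit bookkeeping are in hand, everything else follows from the universal property of affinization.
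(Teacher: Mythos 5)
Your proof is correct and follows essentially the same route as the paper's: show $\Gamma(V_\alpha,\mathcal O_{V_\alpha})\cong A$ via codimension two and normality of $V_\alpha$, invoke quasi-affineness to get an open embedding $V_\alpha\hookrightarrow\affineClosureofBasicAffineSpace$ extending the identity on $G/N$, and then match the two $G$-orbits of $V_\alpha$ with strata of \labelcref{Stratification of Affine Closure of Basic Affine Space by Partial Basic Affine Spaces}. The only cosmetic difference is in the final bookkeeping: you pin down the second stratum by noting that distinct standard commutator subgroups are non-conjugate, whereas the paper simply compares the two $G\times T$-orbit stratifications; both observations are valid and amount to the same thing.
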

\begin{proof}
Recall that the complement of $G/N \subseteq V_{\alpha}$ has codimension two, since at the reduced level it can be identified with the scheme theoretic image of the zero section $G/Q_{\alpha} \to V_{\alpha}$ and $V_{\alpha}$ is a rank two vector bundle. Therefore since $V_{\alpha}$ is smooth and in particular normal we therefore see that the restriction map gives an equivalence $\mathcal{O}(V_{\alpha}) \xrightarrow{\sim} A$. Since $V_{\alpha}$ is quasi-affine by \cref{KL Vector Bundle is Quasiaffine}, the affinization map for $V_{\alpha}$ is an open embedding, and so we have an open embedding $V_{\alpha} \subseteq \overline{G/N}$ given by the composite (read left to right) \[V_{\alpha} \subseteq \text{Spec}(\mathcal{O}(V_{\alpha})) \xleftarrow{\sim} \affineClosureofBasicAffineSpace\] whose restriction to the open $G$-orbit $G/N$ the open embedding $G/N \subseteq \affineClosureofBasicAffineSpace$. Therefore the claim follows by comparing the stratification of $V_{\alpha} = G/N \cup G/[P_{\alpha}, P_{\alpha}]$ into the orbits of the action of $G \times T$ to the stratification of \cref{G Mod Commutator of Parabolic Is Qaff With Stratification} with $I = \emptyset$.
\end{proof}

Since $V_{\alpha}$ is smooth, we in particular see the following, which completes the proof of \cref{Stratification of Affine Closure of Basic Affine Space by Partial Basic Affine Spaces Proposition}:

\begin{Corollary}\label{Smooth Locus Has What You Expected For Simply Connected}
    The smooth locus of $\affineClosureofBasicAffineSpace$ contains $G/N$ and $G/[P_{\alpha}, P_{\alpha}]$ for any simple root $\alpha$.
\end{Corollary}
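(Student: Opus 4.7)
The plan is to deduce this corollary directly from \cref{Valpha is the expected two strata}, exploiting the fact that a vector bundle over a smooth base is smooth. Since the statement is essentially a repackaging of the preceding result, there is really no obstacle; the main content has already been established in proving that $V_{\alpha}$ embeds as an open subscheme of $\affineClosureofBasicAffineSpace$ realizing the union of the two strata in question.

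Concretely, I would first observe that the base $G/Q_{\alpha}$ of the rank two symplectic vector bundle $f_{\alpha}: V_{\alpha} \to G/Q_{\alpha}$ is smooth, being a homogeneous space for $G$. Consequently $V_{\alpha}$, being a vector bundle over a smooth base, is itself smooth. Next, by \cref{Valpha is the expected two strata}, we may identify the open subscheme $G/N \cup G/[P_{\alpha}, P_{\alpha}]$ of $\affineClosureofBasicAffineSpace$ with $V_{\alpha}$. An open subscheme of a variety that happens to be smooth is contained in the smooth locus of the ambient variety, so $G/N \cup G/[P_{\alpha}, P_{\alpha}]$ lies in the smooth locus of $\affineClosureofBasicAffineSpace$.

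Taking the union over all simple roots $\alpha$ then yields the claim, and in particular completes the proof of the second sentence of \cref{Stratification of Affine Closure of Basic Affine Space by Partial Basic Affine Spaces Proposition}. Since the argument simply combines the smoothness of vector bundles over smooth bases with the already-established open immersion $V_{\alpha} \hookrightarrow \affineClosureofBasicAffineSpace$, there is no step I expect to pose any difficulty.
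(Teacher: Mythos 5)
Your proposal is correct and follows the same route as the paper: the paper simply observes that $V_{\alpha}$ is smooth and, combining this with \cref{Valpha is the expected two strata} (which realizes $V_{\alpha}$ as the open subscheme $G/N \cup G/[P_{\alpha}, P_{\alpha}]$ of $\affineClosureofBasicAffineSpace$), concludes that these strata lie in the smooth locus. Your spelled-out justification that $V_{\alpha}$ is smooth because it is a vector bundle over the homogeneous space $G/Q_{\alpha}$ is the same observation, just made explicit.
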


\subsection{Irreducible Elements of Functions on $T^*(G/N)$} 
In this section, we use the Gelfand-Graev action to compute some irreducible elements of $\ringOfFunctionsForCOTANGENTBUNDLEOfBasicAffineSpace$: 


\begin{Lemma}\label{The GK Generators of ringOfFunctionsForCOTANGENTBUNDLEOfBasicAffineSpace Are Irreducible}
    For all fundamental weights $\omega_i$, $w \in W$, and nonzero $z \in \ringOfFunctionsForBasicAffineSpace_{\omega_{i}}$, the element  $w(z) \in \ringOfFunctionsForCOTANGENTBUNDLEOfBasicAffineSpace$ is irreducible. 
\end{Lemma}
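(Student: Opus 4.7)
The plan is to reduce the irreducibility of $w(z)$ in $\ringOfFunctionsForCOTANGENTBUNDLEOfBasicAffineSpace$ to the irreducibility of $z$ in $\ringOfFunctionsForBasicAffineSpace$, using the inclusion $\ringOfFunctionsForBasicAffineSpace \subseteq \ringOfFunctionsForCOTANGENTBUNDLEOfBasicAffineSpace$ together with the splitting $z_0^\ast : \ringOfFunctionsForCOTANGENTBUNDLEOfBasicAffineSpace \to \ringOfFunctionsForBasicAffineSpace$ afforded by the zero section $z_0 : G/N \to T^\ast(G/N)$. Since Theorem~\ref{Gelfand-Graev Action on Affine Closure of Cotangent Bundle} supplies a $W$-action on $\ringOfFunctionsForCOTANGENTBUNDLEOfBasicAffineSpace$ by $k$-algebra automorphisms, which preserves irreducibility, I may reduce at once to the case $w = e$.

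First I would show that $z$ is irreducible in $\ringOfFunctionsForBasicAffineSpace$. In the integral domain $\ringOfFunctionsForBasicAffineSpace = \bigoplus_{\lambda} \ringOfFunctionsForBasicAffineSpace_\lambda$ graded by dominant weights, any factorization $z = ab$ may, via any total ordering on $\characterlatticeforT$ and the standard graded-domain argument, be refined to one in which $a$ and $b$ are $T$-weight homogeneous of dominant weights summing to $\omega_i$. Because $G$ is simply connected, the dominant cone is the free commutative monoid $\bigoplus_j \mathbb{Z}_{\geq 0}\omega_j$, in which $\omega_i$ is primitive. Hence one of the weights must vanish, so the corresponding factor lies in $\ringOfFunctionsForBasicAffineSpace_0 = k$ and is a unit.

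Next I would transfer this irreducibility to $\ringOfFunctionsForCOTANGENTBUNDLEOfBasicAffineSpace$. Since $\ringOfFunctionsForCOTANGENTBUNDLEOfBasicAffineSpace$ is a UFD by Corollary~\ref{The Ring ringOfFunctionsForCOTANGENTBUNDLEOfBasicAffineSpace is a UFD} and conical under the $c$-grading by Proposition~\ref{Grading on Ring of Functions is (1) Admits Equivariant Projection and Moment Maps (2) Is Determined by T Weight and Height (3) is W-equivariant (4) is conical}(4), the units of $\ringOfFunctionsForCOTANGENTBUNDLEOfBasicAffineSpace$ reduce to $k^\ast$ and the $c$-homogeneous element $z$ can be written $z = c \prod_i p_i^{n_i}$ with $c \in k^\ast$ and each $p_i$ a $c$-homogeneous prime. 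Applying $z_0^\ast$, which preserves the $c$-grading since $z_0$ is $\mathbb{G}_m$-equivariant and which fixes $z \in \ringOfFunctionsForBasicAffineSpace$, yields the identity $z = c \prod_i z_0^\ast(p_i)^{n_i}$ in $\ringOfFunctionsForBasicAffineSpace$. If some $z_0^\ast(p_i)$ were a unit in $\ringOfFunctionsForBasicAffineSpace$, then $p_i$ would have $c$-degree $0$ and hence lie in $\ringOfFunctionsForCOTANGENTBUNDLEOfBasicAffineSpace^0 = k$, contradicting primality. Therefore every $z_0^\ast(p_i)$ admits a nontrivial irreducible factorization in the UFD $\ringOfFunctionsForBasicAffineSpace$; refining the identity above into a product of irreducibles and comparing with the irreducibility of $z$ in $\ringOfFunctionsForBasicAffineSpace$ established in the previous step forces exactly one index $i_0$ with $n_{i_0} \neq 0$ and moreover $n_{i_0} = 1$, so $z$ is irreducible in $\ringOfFunctionsForCOTANGENTBUNDLEOfBasicAffineSpace$.

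The hard part is the transfer step: one must rule out prime factors of $z$ in $\ringOfFunctionsForCOTANGENTBUNDLEOfBasicAffineSpace$ that become units upon restriction along the zero section. This is precisely where conicity of the $c$-grading -- and hence the semisimplicity of $G$ -- is essential.
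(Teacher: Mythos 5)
Your proof is correct, but you have taken a genuinely different and more elaborate route than the paper does, and your closing remark about what is ``essential'' is not accurate.

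The paper's argument is much more economical. After reducing to $w = 1$, it observes that $\ringOfFunctionsForCOTANGENTBUNDLEOfBasicAffineSpace$ carries the usual $\mathbb{Z}^{\geq 0}$-grading coming from fiber scaling on $T^*(G/N)$, and $\ringOfFunctionsForBasicAffineSpace$ is precisely the degree-zero piece. Since $z \in \ringOfFunctionsForBasicAffineSpace$ has degree zero in this grading and $\ringOfFunctionsForCOTANGENTBUNDLEOfBasicAffineSpace$ is a domain, any factorization $z = ab$ in $\ringOfFunctionsForCOTANGENTBUNDLEOfBasicAffineSpace$ forces $a, b \in \ringOfFunctionsForBasicAffineSpace$ immediately; one then concludes exactly as in your first step, using the $(\mathbb{Z}^{\geq 0})^r$-grading of $\ringOfFunctionsForBasicAffineSpace$ by dominant weights and the primitivity of $\omega_i$. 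The entire transfer step that you carry out via the zero-section retraction $z_0^*$, the UFD property of $\ringOfFunctionsForCOTANGENTBUNDLEOfBasicAffineSpace$, and the conicity of the $c$-grading is replaced by the single observation that $\ringOfFunctionsForBasicAffineSpace = \ringOfFunctionsForCOTANGENTBUNDLEOfBasicAffineSpace^{\text{usual }0}$.

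This matters for your final sentence: conicity of the $c$-grading (and hence semisimplicity of $G$) is not in fact what drives the reduction from $\ringOfFunctionsForCOTANGENTBUNDLEOfBasicAffineSpace$ to $\ringOfFunctionsForBasicAffineSpace$ --- the usual $\mathbb{Z}^{\geq 0}$-grading does all the work, and it exists for any $G$. Your proof also leans on Corollary~\ref{The Ring ringOfFunctionsForCOTANGENTBUNDLEOfBasicAffineSpace is a UFD}, which the paper's argument avoids entirely; this is harmless here (since $G$ is assumed simply connected in this section) but introduces a logical dependence that the lemma does not actually need. Your first paragraph (irreducibility of $z$ in $\ringOfFunctionsForBasicAffineSpace$) matches the paper's final step exactly.
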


\begin{proof}
    It suffices to show this in the case when $w = 1$ since any $w \in W$ gives a ring automorphism which in particular preserves irreducibility. Now, if $z = ab$ for some $a, b \in \ringOfFunctionsForCOTANGENTBUNDLEOfBasicAffineSpace$, then by the $\mathbb{Z}^{\geq 0}$-grading given by the usual $\mathbb{G}_m$-action on the cotangent bundle, we see that $a, b \in A$. However, in $A$, we may identify the $\characterlatticeforT$-grading with a $(\mathbb{Z}^{\geq 0})^r$ grading using the fundamental weights. Then the irreducibility of $z$ then follows as $A_0 = k$, which implies that any nonzero element of $A_{v}$ for $v \in (\mathbb{Z}^{\geq 0})^r$ of length one must be irreducible. 
\end{proof}

\subsection{Torus Stabilizers and Projections}\label{Torus Stabilizers and Projections Subsection} For any global function $f$ on some scheme $Y$, we let $D(f)$ denote the complement of the vanishing locus of $f$, and, for any subset of global functions $F$, we let $D(F) := \cup_{f \in F}D(f)$. We now prove the following proposition, which informally says that for any $\mathfrak{p} \in D(\ringOfFunctionsForCOTANGENTBUNDLEOfBasicAffineSpace_{\lambda})$, there exists some $w \in W$ such that $\projectionFromAffineClosureofCotangentBundleToAffineClosureofSpace_w(\mathfrak{p})$ lies in an open locus of $\affineClosureofBasicAffineSpace$ determined by root hyperplanes which do not contain $\lambda$.

\begin{Proposition}\label{If You Have a Homogeneous Function Nonvanishing at Some Point with a Homogeneous Grading Off Some Walls Then You Have Control Over One of Its Projections}
    Assume $\lambda \in \characterlatticeforT$ and $w \in W$ such that $w\lambda$ lies in the closure of the dominant Weyl chamber. Write $w\lambda = \sum_i n_i\omega_i$ with $n_i \in \mathbb{Z}^{\geq 0}$, and let $S_{w\lambda}$ denote the subset of fundamental weights $\omega_i$ for which $n_i \neq 0$. Then $\projectionFromAffineClosureofCotangentBundleToAffineClosureofSpace_w(D(R_{\lambda}))$ maps into the open subscheme $\cap_{\omega_i \in S_{w\lambda}}D(A_{\omega_i})$. In particular, if $\lambda$ is regular then $\projectionFromAffineClosureofCotangentBundleToAffineClosureofSpace_w(D(R_{\lambda}))$ maps into $G/N$. 
\end{Proposition}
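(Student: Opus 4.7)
The strategy is to combine \cref{affineClosureOfCotangentBundleofBasicAffineSpace is a Closed Subscheme of |W| Many Products of G/N times LGd timesLTdsslash W LTD and Multiplication Map Is Surjective on Graded Pieces} with the Cartan-product structure of the graded ring $A$, in two steps. First I would pass from a nonvanishing element of $R_\lambda$ at $\mathfrak{p}$ to a nonvanishing element of $A_{w\lambda}$ at $\pi_w(\mathfrak{p})$; then I would extract, from this, a nonvanishing element of each $A_{\omega_i}$ with $\omega_i \in S_{w\lambda}$.

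For the first step, choose $r \in R_\lambda$ with $r(\mathfrak{p}) \neq 0$. Since $w\lambda$ is dominant by hypothesis, applying \cref{affineClosureOfCotangentBundleofBasicAffineSpace is a Closed Subscheme of |W| Many Products of G/N times LGd timesLTdsslash W LTD and Multiplication Map Is Surjective on Graded Pieces} to the weight $w\lambda$ and the Weyl element $w^{-1}$ gives the surjectivity
\[
M_{w^{-1},\, w\lambda}\colon \Sym(\LG) \otimes_{Z\LG} \Sym(\LT) \otimes_k w^{-1}(A_{w\lambda}) \twoheadrightarrow R_\lambda.
\]
Writing $r = \sum_j s_j \cdot w^{-1}(a_j)$ with $a_j \in A_{w\lambda}$, the assumption $r(\mathfrak{p}) \neq 0$ forces $w^{-1}(a_j)(\mathfrak{p}) \neq 0$ for some $j$. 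Unpacking the definition $\pi_w = \projectionFromAffineClosureofCotangentBundleToAffineClosureofSpace \circ w$ and the convention by which the Gelfand-Graev action of \cref{Gelfand-Graev Action on Affine Closure of Cotangent Bundle} induces the $W$-action on $R$ used in that lemma, the element $w^{-1}(a_j) \in R$ is precisely the pullback $\pi_w^{\#}(a_j)$. Hence this nonvanishing becomes $a_j(\pi_w(\mathfrak{p})) \neq 0$, giving $\pi_w(\mathfrak{p}) \in D(A_{w\lambda})$.

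For the refinement, fix $\omega_i \in S_{w\lambda}$. Since $n_i \geq 1$, the weight $w\lambda - \omega_i$ is still dominant, and the Cartan product property for the $G$-module decomposition $A \cong \bigoplus_\mu V(\mu)$ (that is, $V(w\lambda)$ occurs as the top component of $V(\omega_i) \otimes V(w\lambda - \omega_i)$, combined with the fact that $A$ is a domain so that the highest weight vectors multiply to something nonzero) gives surjectivity of
\[
A_{\omega_i} \otimes_k A_{w\lambda - \omega_i} \twoheadrightarrow A_{w\lambda}.
\]
Decomposing $a_j = \sum_k b_k c_k$ accordingly, $a_j(\pi_w(\mathfrak{p})) \neq 0$ forces $b_k(\pi_w(\mathfrak{p})) \neq 0$ for some $k$ with $b_k \in A_{\omega_i}$, yielding $\pi_w(\mathfrak{p}) \in D(A_{\omega_i})$. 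Running this for each $\omega_i \in S_{w\lambda}$ proves the main claim.

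For the ``in particular'' assertion, regularity of $\lambda$ makes $w\lambda$ regular as well, so $S_{w\lambda}$ is the full set of fundamental weights. Using the $G$-equivariant embedding $\affineClosureofBasicAffineSpace \hookrightarrow \bigoplus_i V(\omega_i)$ extracted from the proof of \cref{G Mod Commutator of Parabolic Is Qaff With Stratification} (specialized to $I = \emptyset$) together with the stratification \labelcref{Stratification of Affine Closure of Basic Affine Space by Partial Basic Affine Spaces}, one identifies $\bigcap_i D(A_{\omega_i})$ with the locus of points $(v_i)_i \in \affineClosureofBasicAffineSpace$ all of whose coordinates $v_i$ are nonzero, which is precisely the open stratum $G/N$. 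The main obstacle is not mathematical depth but the bookkeeping step identifying $w^{-1}(a_j) \in R$ with $\pi_w^{\#}(a_j)$; this requires fixing a compatible convention for how the $W$-action on $\affineClosureOfCotangentBundleofBasicAffineSpace$ from \cref{Gelfand-Graev Action on Affine Closure of Cotangent Bundle} is transported to a $W$-action on $R$, but once the conventions are aligned the identification is automatic from the $T$-equivariance of $\pi$.
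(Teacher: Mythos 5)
Your proof is correct and follows essentially the same strategy as the paper's: both rely on the surjectivity of the multiplication map from \cref{affineClosureOfCotangentBundleofBasicAffineSpace is a Closed Subscheme of |W| Many Products of G/N times LGd timesLTdsslash W LTD and Multiplication Map Is Surjective on Graded Pieces} refined through the Cartan-product structure $A_{\omega_i}\cdot A_{w\lambda-\omega_i}\twoheadrightarrow A_{w\lambda}$, then extract a nonvanishing factor in $A_{\omega_i}$. The only cosmetic difference is that you invoke the lemma as $M_{w^{-1},\,w\lambda}$ and argue pointwise (which forces you to track the $W$-action convention on $R$ explicitly), whereas the paper invokes $M_{1,\,w\lambda}$, transports $\mathfrak{p}$ to $w\mathfrak{p}$, and phrases the extraction at the level of ideals via $R_{w\lambda}\subseteq J_i$, thereby sidestepping that bookkeeping.
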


\begin{proof}[Proof of \cref{If You Have a Homogeneous Function Nonvanishing at Some Point with a Homogeneous Grading Off Some Walls Then You Have Control Over One of Its Projections}]
    By \cref{affineClosureOfCotangentBundleofBasicAffineSpace is a Closed Subscheme of |W| Many Products of G/N times LGd timesLTdsslash W LTD and Multiplication Map Is Surjective on Graded Pieces}, we see that the multiplication map \[\Sym(\LG) \otimes_{Z\LG} \Sym(\LT) \otimes \ringOfFunctionsForBasicAffineSpace_{w\lambda}  \to \ringOfFunctionsForCOTANGENTBUNDLEOfBasicAffineSpace_{w\lambda}\] is surjective. As $\ringOfFunctionsForBasicAffineSpace$ is generated by the union of the $\ringOfFunctionsForBasicAffineSpace_{\omega}$ where $\omega$ varies over the fundamental weights, we therefore in particular obtain the multiplication map \begin{equation}\label{Surjective Multiplication Map Pulled Back to Appropriate Fundamental Weights} \Sym(\LG) \otimes_{Z\LG} \Sym(\LT) \otimes \bigotimes_i A_{\omega_i}^{\otimes n_i} \to \ringOfFunctionsForCOTANGENTBUNDLEOfBasicAffineSpace_{w\lambda}\end{equation} is surjective. 
    
    Now fix some $\mathfrak{p} \in \affineClosureOfCotangentBundleofBasicAffineSpace$ such that $\ringOfFunctionsForCOTANGENTBUNDLEOfBasicAffineSpace_{\lambda} \not\subseteq \mathfrak{p}$, so that $\ringOfFunctionsForCOTANGENTBUNDLEOfBasicAffineSpace_{w\lambda} \not\subseteq w\mathfrak{p}$. 
    Let $I$ denote the ideal of $R$ generated by $R_{w\lambda}$ and, for a fixed $\omega_i \in S_{w\lambda}$, let $J_i$ denote the ideal of $\ringOfFunctionsForCOTANGENTBUNDLEOfBasicAffineSpace$ generated by $A_{\omega_{i}}$. The fact that the multiplication map of \labelcref{Surjective Multiplication Map Pulled Back to Appropriate Fundamental Weights} is surjective implies that $R_{w\lambda} \subseteq J_i$ and so $I \subseteq J_i$. In particular, we see that $A_{\omega_{i}} \not\subseteq w\mathfrak{p}$. Thus $\projectionFromAffineClosureofCotangentBundleToAffineClosureofSpace_w(\mathfrak{p}) = A \cap w\mathfrak{p}$ does not contain all of $A_{\omega_{i}}$ for any $\omega_i \in S_{w\lambda}$, as desired.     
\end{proof}

From this, we derive the following:

\begin{Corollary}\label{If you don't vanish on a set of function whose T-weights span LTd You Project into Some G/N Up To W-Action}
    Assume $\lambda_1, ..., \lambda_q \in \characterlatticeforT$ span the vector space $\LTd(\mathbb{Q})$. Then there exists some $w \in W$ such that $\projectionFromAffineClosureofCotangentBundleToAffineClosureofSpace_w$ maps the set $\cap_i D(R_{\lambda_{i}})$ into $G/N$. In particular $T$ acts with no stabilizer on any point in $\cap_i D(R_{\lambda_{i}})$. 
\end{Corollary}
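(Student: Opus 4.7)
The plan is to reduce the multi-$\lambda_i$ situation to the single-weight case of \cref{If You Have a Homogeneous Function Nonvanishing at Some Point with a Homogeneous Grading Off Some Walls Then You Have Control Over One of Its Projections} by replacing the family $\lambda_1,\ldots,\lambda_q$ with a well-chosen positive integer combination. The first observation is that for any positive integers $n_1,\ldots,n_q$, one has the containment
\[
    \bigcap_i D(R_{\lambda_i}) \subseteq D\bigl(R_{\sum_i n_i\lambda_i}\bigr),
\]
since if $\mathfrak{p}$ lies in the left-hand side and we pick $r_i \in R_{\lambda_i}\setminus\mathfrak{p}$ for each $i$, then the product $\prod_i r_i^{n_i}$ lies in $R_{\sum_i n_i\lambda_i}$ and is not in $\mathfrak{p}$ by primality. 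So it suffices to find positive integers $n_i$ such that $\lambda := \sum_i n_i\lambda_i$ is regular and then a $w\in W$ that works for this single $\lambda$.

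The main technical step, and the only one that isn't a direct invocation of earlier results, is to produce such a regular positive integer combination. The hypothesis that $\lambda_1,\ldots,\lambda_q$ span $\LTd(\mathbb{Q})$ forces, for each root $\alpha$, at least one $\lambda_i$ to satisfy $\langle \alpha,\lambda_i\rangle \neq 0$; otherwise all $\lambda_i$ would lie in $\ker(\alpha)$, contradicting spanning. Consequently the linear condition $\langle \alpha,\sum_i n_i\lambda_i\rangle = 0$ carves out a proper affine hyperplane in the space of $(n_1,\ldots,n_q) \in \mathbb{Q}^q$, and since there are only finitely many roots, the positive rational orthant $\mathbb{Q}_{>0}^q$ is not contained in the (finite) union of these hyperplanes. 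Choosing a rational point off all of them and clearing denominators produces positive integers $n_i$ with $\lambda := \sum_i n_i\lambda_i$ regular.

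With this $\lambda$ in hand, choose $w \in W$ such that $w\lambda$ lies in the closure of the dominant Weyl chamber; regularity of $\lambda$ implies regularity of $w\lambda$, so $w\lambda$ lies in the interior of the dominant chamber and the set $S_{w\lambda}$ of \cref{If You Have a Homogeneous Function Nonvanishing at Some Point with a Homogeneous Grading Off Some Walls Then You Have Control Over One of Its Projections} consists of all fundamental weights. By the stratification \labelcref{Stratification of Affine Closure of Basic Affine Space by Partial Basic Affine Spaces}, $\cap_i D(A_{\omega_i}) = G/N$ inside $\affineClosureofBasicAffineSpace$, so \cref{If You Have a Homogeneous Function Nonvanishing at Some Point with a Homogeneous Grading Off Some Walls Then You Have Control Over One of Its Projections} yields $\projectionFromAffineClosureofCotangentBundleToAffineClosureofSpace_w(D(R_\lambda)) \subseteq G/N$, and composing with the containment from the first paragraph proves the main assertion.

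For the \lq in particular\rq{} claim, I would use $T$-equivariance. If $\mathfrak{p} \in \cap_i D(R_{\lambda_i})$ had nontrivial $T$-stabilizer $T_\mathfrak{p}$, then $w\mathfrak{p}$ would have stabilizer $wT_\mathfrak{p}w^{-1}$, and since $\projectionFromAffineClosureofCotangentBundleToAffineClosureofSpace$ is $T$-equivariant this would land inside the $T$-stabilizer of $\projectionFromAffineClosureofCotangentBundleToAffineClosureofSpace_w(\mathfrak{p}) \in G/N$; but $G/N \to G/B$ is a $T$-torsor, so $T$ acts freely on $G/N$ and $T_\mathfrak{p}$ must be trivial.
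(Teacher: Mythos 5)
Your proof is correct, and the overall strategy coincides with the paper's: reduce to a single regular weight $\lambda$ that is a positive-integer combination of the $\lambda_i$ (using primality of $\mathfrak{p}$ to pass from $\cap_i D(R_{\lambda_i})$ to $D(R_\lambda)$), then invoke \cref{If You Have a Homogeneous Function Nonvanishing at Some Point with a Homogeneous Grading Off Some Walls Then You Have Control Over One of Its Projections} and $T$-equivariance of $\projectionFromAffineClosureofCotangentBundleToAffineClosureofSpace$. Where you diverge is in how the regular combination is produced. The paper first extracts a subset $S \subseteq \{\lambda_i\}$ that is a $\mathbb{Q}$-basis of $\LTd(\mathbb{Q})$ and then appeals to \cref{In a spanning cone you can always find a spanning lattice point away from some hyperplanes}, a metric argument (large balls in the open cone must contain a lattice point) that handles an arbitrary closed, positively-scaling-invariant set $\mathcal{Z}$ of missing points. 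You instead work in the coefficient space $\mathbb{Q}^q$ and observe directly that the conditions $\langle \sum_i n_i\lambda_i, \alpha^\vee\rangle = 0$ cut out a finite union of proper linear hyperplanes which cannot cover the open positive orthant. Your version is more elementary and self-contained for this corollary, at the cost of not reusing the paper's lemma; the paper's choice is motivated by the fact that the same lemma is needed again in \cref{If You're Stabilized by a Dimension One Subtorus Then Up to W-Action You Project to G/Commutator of Palpha} with a more complicated $\mathcal{Z}$ (the union of pairwise intersections of root hyperplanes), where the hyperplane-in-coefficient-space picture would be less convenient. One minor notational point: the pairing you write as $\langle \alpha, \lambda_i\rangle$ should be the pairing of the weight $\lambda_i$ against the \emph{coroot} $\alpha^\vee$, since that is what defines the root hyperplanes in $\LTd$; the argument is unaffected.
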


We show \cref{If you don't vanish on a set of function whose T-weights span LTd You Project into Some G/N Up To W-Action} after proving the following Lemma:  

\begin{Lemma}\label{In a spanning cone you can always find a spanning lattice point away from some hyperplanes}
    Assume $L \subseteq \mathbb{Q}^d$ is some full rank lattice and choose some basis $S := \{\vec{v}_1, ..., \vec{v}_d\} \subseteq L$ of $\mathbb{Q}^d$. Denote by $C$ the $\mathbb{R}^{> 0}$-span of $S$, i.e. \[C := \{\sum_{i = 1}^d\alpha_i\vec{v}_i : \alpha_i \in \mathbb{R}^{> 0}\}\] and assume $\mathcal{Z}$ is some closed subset of $\mathbb{R}^d$ which does not contain $C$ and which is closed under scaling by any positive real number. Then the $\mathbb{Z}^{> 0}$-span of $S$ contains a point of $L$ not in $\mathcal{Z}$. 
\end{Lemma}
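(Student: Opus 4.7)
The plan is to reduce the lemma to a density argument on the open positive orthant $\mathbb{R}_{>0}^d$ by exploiting the scaling invariance of $\mathcal{Z}$.

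First, I would set up coordinates via the linear isomorphism $\phi \colon \mathbb{R}^d \to \mathbb{R}^d$, $(\alpha_1,\dots,\alpha_d) \mapsto \sum_i \alpha_i \vec{v}_i$. This is a homeomorphism carrying $\mathbb{R}_{>0}^d$ onto $C$, so $\phi^{-1}(\mathcal{Z})$ is closed in $\mathbb{R}^d$ and stable under multiplication by positive reals. The hypothesis that $C \not\subseteq \mathcal{Z}$ translates into the statement that the set
\[
U \;:=\; \mathbb{R}_{>0}^d \setminus \phi^{-1}(\mathcal{Z})
\]
is a nonempty open subset of $\mathbb{R}_{>0}^d$, and $U$ inherits from $\mathcal{Z}$ the property of being stable under scaling by any positive real number.

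Next I would use that $\mathbb{Q}_{>0}^d$ is dense in $\mathbb{R}_{>0}^d$ to select a rational point $(q_1,\dots,q_d) \in U$ with each $q_i \in \mathbb{Q}_{>0}$. Choose a positive integer $N$ clearing all the denominators, so that $n_i := N q_i$ is a positive integer for every $i$. By the positive-scaling invariance of $U$, we have $(n_1,\dots,n_d) = N \cdot (q_1,\dots,q_d) \in U$, and therefore
\[
\sum_{i=1}^d n_i \vec{v}_i \;=\; \phi(n_1,\dots,n_d) \;\notin\; \mathcal{Z}.
\]
This point lies in the $\mathbb{Z}^{>0}$-span of $S$, and since each $\vec{v}_i \in L$ and $L$ is a $\mathbb{Z}$-module, it lies in $L$, yielding the conclusion.

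I do not expect any genuine obstacle here: the content is that an open cone in $\mathbb{R}_{>0}^d$ automatically meets $\mathbb{Z}_{>0}^d$ because positive scaling lets us dilate a rational point in the cone until its coordinates become integers. The only minor care point is ensuring strict positivity of the coordinates at every stage, which is automatic because $U \subseteq \mathbb{R}_{>0}^d$ and $\mathbb{Q}_{>0}^d$ is dense in $\mathbb{R}_{>0}^d$.
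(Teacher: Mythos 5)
Your proof is correct. You take a mildly different route from the paper: the paper argues that $C \cap \mathcal{Z}^{\mathsf{c}}$ contains open balls of arbitrarily large radius (by scaling invariance), then invokes the fact that a full-rank lattice $L$ has finite covering radius to find a point of $L$ in $C \cap \mathcal{Z}^{\mathsf{c}}$, and only then observes that this point lies in the $\mathbb{Q}^{>0}$-span of $S$ and clears denominators. You instead pull everything back to coordinates via $\phi$, so that $C$ becomes $\mathbb{R}^d_{>0}$, and use density of $\mathbb{Q}^d_{>0}$ in $\mathbb{R}^d_{>0}$ to land a rational point directly in the complement of $\phi^{-1}(\mathcal{Z})$, then scale. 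The final step (scaling a positive rational vector to a positive integer vector, using the positive-scaling invariance of $\mathcal{Z}$) is the same in both. Your version sidesteps the covering-radius lemma entirely and replaces it with the more elementary density of $\mathbb{Q}$ in $\mathbb{R}$, which makes the argument shorter and self-contained; the paper's version is the same scaling trick packaged through the lattice geometry. There is no gap in your argument.
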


\begin{proof}
    Since $C$ is open, $C\cap \mathcal{Z}^c$ is open, and so in particular $C\cap \mathcal{Z}^c$ contains an open ball of some positive radius since by assumption it is nonempty. Since $C\cap \mathcal{Z}^c$ is closed under the scaling of any positive real number, for any $r \in \mathbb{R}^{> 0}$, $C\cap \mathcal{Z}^c$ contains an open ball of radius $r$. However, for any full rank lattice, there exists some $M$ such that all points in $\mathbb{R}^d$ are distance at most $M$ from a point on that lattice. Choosing $r > M$ we see that there is an element of $x \in L \cap \mathcal{Z}^c$ in the $\mathbb{R}^{> 0}$-span of $S$. As $S$ is a basis and $x$ in particular lies in $\mathbb{Q}^d$, we see that $x$ lies in the $\mathbb{Q}^{> 0}$-span of $S$. There exists some positive integer $N$ such that $Nx$ therefore is a $\mathbb{Z}^{> 0}$-linear combination of the $\vec{v}_i$, as desired.  
 \end{proof}

\begin{proof}[Proof of \cref{If you don't vanish on a set of function whose T-weights span LTd You Project into Some G/N Up To W-Action}]
    Assume $\mathfrak{p} \in \cap_i D(R_{\lambda_i})$. Let $L := \characterlatticeforT$ and choose some subset $S$ of the $\lambda_i$ such that $S$ is a basis of $\characterlatticeforT \otimes_{\mathbb{Z}} \mathbb{Q}$. If $\mathcal{Z}$ denotes the union of root hyperplanes, we may apply \cref{In a spanning cone you can always find a spanning lattice point away from some hyperplanes} to show there is some $\lambda \in \characterlatticeforT$ which lies in the interior of some Weyl chamber and which is a $\mathbb{Z}^{> 0}$-linear combination of the elements of $S$. Since $\mathfrak{p} \in D(R_{\lambda_i})$ for every $i$, we see $\mathfrak{p} \in D(R_{\lambda})$. Choose the (unique) $w \in W$ which takes $\lambda$ to the dominant Weyl chamber. Then since $w\lambda$ is regular, by \cref{If You Have a Homogeneous Function Nonvanishing at Some Point with a Homogeneous Grading Off Some Walls Then You Have Control Over One of Its Projections} we see that $\projectionFromAffineClosureofCotangentBundleToAffineClosureofSpace(w\mathfrak{p})$ maps to $G/N$. In particular, $\projectionFromAffineClosureofCotangentBundleToAffineClosureofSpace(w\mathfrak{p})$ has no $T$-stabilizer, and thus neither does $\mathfrak{p}$ itself, since $\projectionFromAffineClosureofCotangentBundleToAffineClosureofSpace$ is $T$-equivariant. 
 \end{proof}
From this we may also derive a similar result for the locus of points whose $T$-stabilizer has dimension one: 

    \begin{Corollary}\label{If You're Stabilized by a Dimension One Subtorus Then Up to W-Action You Project to G/Commutator of Palpha}
       Assume that the $T$-stabilizer of some point $\mathfrak{p} \in \affineClosureOfCotangentBundleofBasicAffineSpace$ has dimension one. Then there exists some $w \in W$ and some simple root $\alpha$ such that $\projectionFromAffineClosureofCotangentBundleToAffineClosureofSpace(w\mathfrak{p}) \in G/[P_{\alpha}, P_{\alpha}]$. In particular, the $T$-stabilizer of $\mathfrak{p}$ is $w\mathbb{G}_m^{\alpha}w^{-1}$.
    \end{Corollary}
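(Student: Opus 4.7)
The plan is to refine the argument of \cref{If you don't vanish on a set of function whose T-weights span LTd You Project into Some G/N Up To W-Action} so as to locate $\projectionFromAffineClosureofCotangentBundleToAffineClosureofSpace_w(\mathfrak{p})$ in the codimension-one stratum $G/[P_\alpha, P_\alpha]$ of \labelcref{Stratification of Affine Closure of Basic Affine Space by Partial Basic Affine Spaces}. First I would introduce the submonoid $\Lambda := \{\lambda \in \characterlatticeforT : \ringOfFunctionsForCOTANGENTBUNDLEOfBasicAffineSpace_\lambda \not\subseteq \mathfrak{m}_\mathfrak{p}\}$, which is closed under addition since the product of two homogeneous functions nonvanishing at $\mathfrak{p}$ is nonvanishing at $\mathfrak{p}$. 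Unwinding definitions, the $T$-stabilizer of $\mathfrak{p}$ equals $\bigcap_{\lambda \in \Lambda} \ker\lambda$, so the hypothesis that this stabilizer is $1$-dimensional forces the $\mathbb{Q}$-span $H$ of $\Lambda$ in $\LTd(\mathbb{Q})$ to be a hyperplane.

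Next I would choose a basis $S_0 \subseteq \Lambda$ of $H$ and apply \cref{In a spanning cone you can always find a spanning lattice point away from some hyperplanes} inside $H$, with $\mathcal{Z}$ equal to the union of $H \cap (\gamma^\vee)^{\perp}$ over coroots $\gamma^\vee$ for which $(\gamma^\vee)^{\perp} \neq H$. Each such intersection is a proper subspace of $H$, so $\mathcal{Z}$ does not contain the $\mathbb{R}^{>0}$-cone of $S_0$. The lemma would then produce $\lambda \in \Lambda$ (the positive-integer span of $S_0$ lies in $\Lambda$ by the monoid property) that lies on no reflection hyperplane other than possibly $H$ itself.

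The argument then splits into two cases. If $\lambda$ lies on no reflection hyperplane at all, it is $W$-regular, some $w \in W$ makes $w\lambda$ regular dominant, and \cref{If You Have a Homogeneous Function Nonvanishing at Some Point with a Homogeneous Grading Off Some Walls Then You Have Control Over One of Its Projections} forces $\projectionFromAffineClosureofCotangentBundleToAffineClosureofSpace_w(\mathfrak{p}) \in G/N$. But then the $T$-equivariance of $\projectionFromAffineClosureofCotangentBundleToAffineClosureofSpace$ would make $\text{Stab}_T(w\mathfrak{p}) = w\,\text{Stab}_T(\mathfrak{p})\,w^{-1}$ trivial, contradicting the hypothesis. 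Hence $H$ must equal a single reflection hyperplane $(\gamma^\vee)^{\perp}$, and $\lambda$ lies on $H$ alone. Choosing $w \in W$ so that $w\lambda$ lies in the closure of the dominant Weyl chamber then automatically places $w\lambda$ in the interior of the codimension-one face indexed by a unique simple coroot $\alpha^\vee$, so its fundamental-weight expansion $w\lambda = \sum_i n_i \omega_i$ satisfies $n_\alpha = 0$ and $n_i > 0$ for $i \neq \alpha$.

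Finally, applying \cref{If You Have a Homogeneous Function Nonvanishing at Some Point with a Homogeneous Grading Off Some Walls Then You Have Control Over One of Its Projections} to this $\lambda$ and $w$ gives $\projectionFromAffineClosureofCotangentBundleToAffineClosureofSpace_w(\mathfrak{p}) \in \bigcap_{i \neq \alpha} D(\ringOfFunctionsForBasicAffineSpace_{\omega_i})$. Combined with the identification of each $G/[P_K, P_K]$ as the locus where exactly the $\ringOfFunctionsForBasicAffineSpace_{\omega_i}$ for $i \in K$ vanish, the stratification places $\projectionFromAffineClosureofCotangentBundleToAffineClosureofSpace_w(\mathfrak{p})$ in $G/[P_K, P_K]$ with $K \subseteq \{\alpha\}$; ruling out $K = \emptyset$ by the same stabilizer argument as above leaves $K = \{\alpha\}$. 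The final claim about the stabilizer then follows because $\text{Stab}_T(w\mathfrak{p})$ is a closed $1$-dimensional subgroup scheme of the connected $1$-dimensional torus $\mathbb{G}_m^{\alpha} = \text{Stab}_T(\projectionFromAffineClosureofCotangentBundleToAffineClosureofSpace_w(\mathfrak{p}))$, hence equals $\mathbb{G}_m^{\alpha}$. The hard part is identifying $H$ as a reflection hyperplane; this rests on combining the free-locus contrapositive from \cref{If you don't vanish on a set of function whose T-weights span LTd You Project into Some G/N Up To W-Action} with the fact that a $\mathbb{Q}$-vector space is never a finite union of proper subspaces, which is exactly the content of \cref{In a spanning cone you can always find a spanning lattice point away from some hyperplanes}.
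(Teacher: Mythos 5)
Your proposal is correct and follows essentially the same route as the paper's proof: both introduce the monoid $\mathcal{D}$ (your $\Lambda$) of weights with a nonvanishing section at $\mathfrak{p}$, bound the dimension of its span using the $T$-stabilizer, apply \cref{In a spanning cone you can always find a spanning lattice point away from some hyperplanes} with $\mathcal{Z}$ built from root-hyperplane intersections to produce a $\lambda$ on at most one hyperplane, and then invoke \cref{If You Have a Homogeneous Function Nonvanishing at Some Point with a Homogeneous Grading Off Some Walls Then You Have Control Over One of Its Projections} plus a stabilizer argument to rule out the open stratum. The only stylistic differences are that you use the exact identity $\text{Stab}_T(\mathfrak{p}) = \bigcap_{\lambda\in\Lambda}\ker\lambda$ where the paper derives only a dimension inequality, and you explicitly rule out $K = \emptyset$ at the end where the paper leaves that step implicit.
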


    \begin{proof}
        For such a point $\mathfrak{p}$, we define the set $\mathcal{D} := \{\lambda \in \characterlatticeforT : \mathfrak{p} \in D(\ringOfFunctionsForCOTANGENTBUNDLEOfBasicAffineSpace_{\lambda})\}$. Using \labelcref{characterlatticeforT tensors to give Lie Algebra}, we may view $\mathcal{D}$ as a subset of $\LTd(\mathbb{Q})$ and let $V_{\mathbb{Q}}$ denote the span of $\mathcal{D}$ in $\LTd(\mathbb{Q})$. 
        
        We claim that the dimension of $V_{\mathbb{Q}}$ is at least $\text{dim}(\LTd) - 1$. To see this, assume the dimension of $V_{\mathbb{Q}}$ was less than $\text{dim}(\LTd) - 1$. In this case, there would be two linearly independent elements $\nu_1, \nu_2 \in \LT(\mathbb{Q})$ such that $\nu_1(s) = 0 = \nu_2(s)$ for any $s \in \mathcal{D}$. Here, we use the finite dimensionality of $\LT(\mathbb{Q})$ to canonically identify it with the vector space dual of $\LTd(\mathbb{Q})$. Now using the isomorphism \labelcref{cocharacterlatticeforT tensors to give Lie Algebra} we see that we may replace $\nu_1$ and $\nu_2$ by a positive integer multiple if necessary and additionally assume that both $\nu_1$ and $\nu_2$ are in $X_{\bullet}(T)$. By the definition of $\mathcal{D}$, we have that the image of each map $\nu_i: \mathbb{G}_m \to T$ stabilizes $\mathfrak{p}$. However, the fact that $\nu_1$ and $\nu_2$ form a linearly independent set implies that the subgroup generated by the images of these $\nu_i$ has dimension two, violating our assumption on the dimension of the stabilizer of $\mathfrak{p}$. 
        
        Let $V_{\mathbb{R}}$ denote the $\mathbb{R}$-span of the elements of $\mathcal{D}$, let $\mathcal{Z}'$ denote the union of every intersection of two distinct root hyperplanes, and let $\mathcal{Z} := V_{\mathbb{R}} \cap \mathcal{Z}'$. We also choose a $\mathbb{Q}$-basis $S$ of $V_{\mathbb{Q}} \subseteq V_{\mathbb{R}}$ contained in $\mathcal{D} \subseteq V_{\mathbb{Q}}$ and let $L$ denote the lattice generated by $S$. Since the dimension of $V_{\mathbb{Q}}$ is at least $\text{dim}(\LTd) - 1$, we see that $\mathcal{Z}$ does not contain the $\mathbb{R}^{\geq 0}$-span of $S$, and so we may apply \cref{In a spanning cone you can always find a spanning lattice point away from some hyperplanes} to see that the $\mathbb{Z}^{\geq 0}$-span of the elements of $S$ contains some $\lambda \in \characterlatticeforT$ such that $\lambda$ lies on at most one root hyperplane. Since the $\mathbb{Z}^{\geq 0}$-span of $S$ is contained in $\mathcal{D}$ as $\mathcal{D}$ is its own $\mathbb{Z}^{\geq 0}$-span, we see that $\mathfrak{p} \in D(\ringOfFunctionsForCOTANGENTBUNDLEOfBasicAffineSpace_{\lambda})$. 
        
        Choose some $w \in W$ such that $w\lambda$ lies on a root hyperplane cut out by at most one simple coroot. 
        By \cref{If You Have a Homogeneous Function Nonvanishing at Some Point with a Homogeneous Grading Off Some Walls Then You Have Control Over One of Its Projections}, we see that if $w\lambda$ is contained in no root hyperplanes then $\projectionFromAffineClosureofCotangentBundleToAffineClosureofSpace(w\mathfrak{p})$ projects into $G/N$ and so in particular the $T$-action on $\mathfrak{p}$ is free. Thus $w\lambda$ is contained in exactly one root hyperplane cut out by the vanishing of a simple coroot $\alpha^{\vee}$. Therefore by \cref{If You Have a Homogeneous Function Nonvanishing at Some Point with a Homogeneous Grading Off Some Walls Then You Have Control Over One of Its Projections} $\projectionFromAffineClosureofCotangentBundleToAffineClosureofSpace(w\mathfrak{p}) \in G/[P_{\alpha}, P_{\alpha}]$. The latter claim follows from the fact that $\projectionFromAffineClosureofCotangentBundleToAffineClosureofSpace$ is compatible with the $T$-action, which shows that the $T$-stabilizer of $w\mathfrak{p}$ is a closed subgroup scheme of $\mathbb{G}_m^{\alpha}$. 
    \end{proof}

        \begin{Corollary}\label{T Stabilizer Having Dimension leq 1 Implies Smooth Point}
        If the right $T$-stabilizer of some point $\mathfrak{p} \in \affineClosureOfCotangentBundleofBasicAffineSpace$ has dimension zero or one, then $\mathfrak{p}$ is a smooth point.
    \end{Corollary}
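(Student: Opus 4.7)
The plan is to split on $\dim\text{Stab}_T(\mathfrak{p})$, use the two preceding corollaries to produce $w \in W$ with $\projectionFromAffineClosureofCotangentBundleToAffineClosureofSpace(w\mathfrak{p})$ lying in the smooth locus of $\affineClosureofBasicAffineSpace$, and then conclude smoothness at $\mathfrak{p}$ via \cref{Cotangent Bundle of Smooth Locus is Fiber of affineClosureOfCotangentBundleofBasicAffineSpace Over Smooth Locus of G/N-bar} together with the fact that $W$ acts on $\affineClosureOfCotangentBundleofBasicAffineSpace$ by scheme automorphisms (\cref{Gelfand-Graev Action on Affine Closure of Cotangent Bundle}).

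In the dimension-zero case the main new ingredient needed is that $\mathcal{D} := \{\lambda \in \characterlatticeforT : \mathfrak{p} \in D(R_\lambda)\}$ spans $\LTd(\mathbb{Q})$. If this failed, one could choose a nonzero $\nu \in X_\bullet(T)$ (after clearing denominators) with $\langle\lambda,\nu\rangle = 0$ for every $\lambda \in \mathcal{D}$. For any homogeneous $f_\lambda \in R_\lambda$, the identity $f_\lambda(\nu(s)\mathfrak{p}) = f_\lambda(\mathfrak{p})$ holds: either $\langle\lambda,\nu\rangle = 0$ (when $\lambda \in \mathcal{D}$), in which case $\nu(s)$ acts trivially on $R_\lambda$, or every element of $R_\lambda$ vanishes at $\mathfrak{p}$ (when $\lambda \notin \mathcal{D}$) so both sides are zero. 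Summing over the finitely many homogeneous components of an arbitrary $f \in R$ shows that the image of $\nu$ stabilizes $\mathfrak{p}$, contradicting the 0-dimensionality of the stabilizer. Then \cref{If you don't vanish on a set of function whose T-weights span LTd You Project into Some G/N Up To W-Action} applies and yields $w \in W$ with $\projectionFromAffineClosureofCotangentBundleToAffineClosureofSpace_w(\mathfrak{p}) \in G/N$.

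In the dimension-one case, \cref{If You're Stabilized by a Dimension One Subtorus Then Up to W-Action You Project to G/Commutator of Palpha} directly supplies $w \in W$ and a simple root $\alpha$ with $\projectionFromAffineClosureofCotangentBundleToAffineClosureofSpace(w\mathfrak{p}) \in G/[P_\alpha, P_\alpha]$. In both cases the projection lands in $\smoothLocusOfAffineclosureBasicAffineSpace$ by \cref{Smooth Locus Has What You Expected For Simply Connected}, and \cref{Cotangent Bundle of Smooth Locus is Fiber of affineClosureOfCotangentBundleofBasicAffineSpace Over Smooth Locus of G/N-bar} identifies $\projectionFromAffineClosureofCotangentBundleToAffineClosureofSpace^{-1}(\smoothLocusOfAffineclosureBasicAffineSpace)$ with the smooth variety $T^*(\smoothLocusOfAffineclosureBasicAffineSpace)$; thus $w\mathfrak{p}$ is a smooth point of $\affineClosureOfCotangentBundleofBasicAffineSpace$, and hence so is $\mathfrak{p}$. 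I do not anticipate serious obstacles: the only nontrivial check is the spanning claim in the first case, which is essentially a bookkeeping argument with the $T$-grading, and everything else is a direct appeal to results established earlier in the section.
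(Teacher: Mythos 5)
Your proposal is correct and follows essentially the same route as the paper's proof: split on the stabilizer dimension, invoke \cref{If you don't vanish on a set of function whose T-weights span LTd You Project into Some G/N Up To W-Action} (after the spanning check) in the dimension-zero case and \cref{If You're Stabilized by a Dimension One Subtorus Then Up to W-Action You Project to G/Commutator of Palpha} in the dimension-one case, then conclude via \cref{Smooth Locus Has What You Expected For Simply Connected} and \cref{Cotangent Bundle of Smooth Locus is Fiber of affineClosureOfCotangentBundleofBasicAffineSpace Over Smooth Locus of G/N-bar}. The only cosmetic difference is that the paper phrases the spanning step in terms of the subset of $w\omega_i$ with $\mathfrak{p} \in D(R_{w\omega_i})$, while you work with the full set $\mathcal{D}$; the contradiction argument is the same in substance and your version makes the ``fixed by a subtorus'' step a bit more explicit.
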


    \begin{proof}
        If the $T$-stabilizer of $\mathfrak{p} \in \affineClosureOfCotangentBundleofBasicAffineSpace$ has dimension zero, then we in particular see that the set of $w\omega_i$ for which $\mathfrak{p} \in D(R_{w\omega_i})$ spans the rational points of $\LTd$, since otherwise they would be contained in some hyperplane cut out by some $\alpha \in \LT(\mathbb{Q})$ and thus fixed by some subtorus. Therefore by \cref{If you don't vanish on a set of function whose T-weights span LTd You Project into Some G/N Up To W-Action}       
        some element of the $W$-orbit of $\mathfrak{p}$ projects to $G/N$ under $\projectionFromAffineClosureofCotangentBundleToAffineClosureofSpace$. 
        If the $T$-stabilizer of $\mathfrak{p}$ has dimension one, by \cref{If You're Stabilized by a Dimension One Subtorus Then Up to W-Action You Project to G/Commutator of Palpha} some point in its $W$-orbit projects to some point of $G/[P_{\alpha}, P_{\alpha}]$ under $\projectionFromAffineClosureofCotangentBundleToAffineClosureofSpace$. In either case, $\pi_w(\mathfrak{p})$ lies in $G/N \cup G/[P_{\alpha}, P_{\alpha}]$ for some $w$.  Since $G/N \cup G/[P_{\alpha}, P_{\alpha}] \subseteq \smoothLocusOfAffineclosureBasicAffineSpace$ by our simply connectedness assumption, we see that $\mathfrak{p}$ is smooth by \cref{Cotangent Bundle of Smooth Locus is Fiber of affineClosureOfCotangentBundleofBasicAffineSpace Over Smooth Locus of G/N-bar}. 
    \end{proof}
\subsection{Codimension of Singular Locus of Affine Closure of $T^*(G/N)$} \label{Codimension of Singular Locus of affineClosureOfCotangentBundleofBasicAffineSpace Is At Least Four Subsection}
We now use the results of \cref{Torus Stabilizers and Projections Subsection} to prove the following, which is a key ingredient in our proof of \cref{Affine Closure of Cotangent Bundle of Basic Affine Space Has Symplectic Singularities}:

    \begin{Theorem}\label{Codimension of Singular Locus of affineClosureOfCotangentBundleofBasicAffineSpace is At Least Four}
        The singular locus of $\affineClosureOfCotangentBundleofBasicAffineSpace$ has codimension at least four.
    \end{Theorem}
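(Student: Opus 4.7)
The plan is to combine the smooth-point criterion of \cref{T Stabilizer Having Dimension leq 1 Implies Smooth Point} with a codimension bound on the large-stabilizer locus. By \cref{T Stabilizer Having Dimension leq 1 Implies Smooth Point}, every singular point of $\affineClosureOfCotangentBundleofBasicAffineSpace$ has right $T$-stabilizer of dimension at least two, so setting $Z := \{\mathfrak{p} \in \affineClosureOfCotangentBundleofBasicAffineSpace : \dim \mathrm{Stab}_T(\mathfrak{p}) \geq 2\}$, it suffices to prove $\mathrm{codim}\,Z \geq 4$.

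To push $Z$ down into the singular locus of $\affineClosureofBasicAffineSpace$, I would generalize the argument of \cref{If You're Stabilized by a Dimension One Subtorus Then Up to W-Action You Project to G/Commutator of Palpha} by one notch. For any $\mathfrak{p} \in Z$, a weight $\lambda$ with $\mathfrak{p} \in D(R_\lambda)$ must annihilate the two-dimensional Lie algebra of the stabilizer subtorus, so $\mathcal{D}(\mathfrak{p}) := \{\lambda : \mathfrak{p} \in D(R_\lambda)\}$ spans a subspace $V(\mathfrak{p}) \subseteq \LTd(\mathbb{Q})$ of dimension at most $\dim T - 2$. Applying \cref{In a spanning cone you can always find a spanning lattice point away from some hyperplanes} inside $V(\mathfrak{p})$ with $\mathcal{Z}$ replaced by the union of triple-intersections of root hyperplanes (rather than double-intersections) yields some $\lambda \in \mathcal{D}(\mathfrak{p})$ and some $w \in W$ such that $w\lambda$ lies on exactly two simple-coroot hyperplanes. \cref{If You Have a Homogeneous Function Nonvanishing at Some Point with a Homogeneous Grading Off Some Walls Then You Have Control Over One of Its Projections} then forces $\projectionFromAffineClosureofCotangentBundleToAffineClosureofSpace_w(\mathfrak{p}) \in \overline{G/[P_\theta, P_\theta]}$ for some size-two subset $\theta$ of simple roots, giving the inclusion $Z \subseteq \bigcup_{w \in W}\projectionFromAffineClosureofCotangentBundleToAffineClosureofSpace_w^{-1}(\affineClosureofBasicAffineSpace \setminus \smoothLocusOfAffineclosureBasicAffineSpace)$.

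A direct dimension count using the semidirect product decomposition \labelcref{Semidirect Product Decomposition for Parabolics} gives $\dim[P_\theta, P_\theta] = \dim N + 2|\theta|$, so every stratum $G/[P_\theta, P_\theta]$ with $|\theta| \geq 2$ has codimension at least four in $\affineClosureofBasicAffineSpace$; combined with \cref{Smooth Locus Has What You Expected For Simply Connected}, this shows the singular locus of $\affineClosureofBasicAffineSpace$ has codimension at least four. Since each $\projectionFromAffineClosureofCotangentBundleToAffineClosureofSpace_w = \projectionFromAffineClosureofCotangentBundleToAffineClosureofSpace \circ w$ shares a codimension bound with $\projectionFromAffineClosureofCotangentBundleToAffineClosureofSpace$, it suffices to bound the codimension of $\projectionFromAffineClosureofCotangentBundleToAffineClosureofSpace^{-1}(\affineClosureofBasicAffineSpace\setminus\smoothLocusOfAffineclosureBasicAffineSpace)$. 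The main obstacle is the transfer of the downstairs codimension bound to an upstairs one: \cref{Cotangent Bundle of Smooth Locus is Fiber of affineClosureOfCotangentBundleofBasicAffineSpace Over Smooth Locus of G/N-bar} identifies $\projectionFromAffineClosureofCotangentBundleToAffineClosureofSpace$ as a rank-$\dim B$ affine bundle over $\smoothLocusOfAffineclosureBasicAffineSpace$ so the generic fiber has dimension $\dim B = \dim G/N$, but the fibers could a priori jump over $\affineClosureofBasicAffineSpace\setminus\smoothLocusOfAffineclosureBasicAffineSpace$. To rule this out I would invoke the closed embedding of \cref{affineClosureOfCotangentBundleofBasicAffineSpace is a Closed Subscheme of |W| Many Products of G/N times LGd timesLTdsslash W LTD and Multiplication Map Is Surjective on Graded Pieces} together with the conical $\mathbb{G}_m$-equivariance of \cref{Grading on Ring of Functions is (1) Admits Equivariant Projection and Moment Maps (2) Is Determined by T Weight and Height (3) is W-equivariant (4) is conical}(1), which together should constrain every fiber to have dimension at most $\dim B$. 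The formula $\mathrm{codim}\,\projectionFromAffineClosureofCotangentBundleToAffineClosureofSpace^{-1}(Y') = \mathrm{codim}(Y', \affineClosureofBasicAffineSpace) + (\dim B - \sup_{y \in Y'}\dim \projectionFromAffineClosureofCotangentBundleToAffineClosureofSpace^{-1}(y))$ then delivers the desired codimension-four bound.
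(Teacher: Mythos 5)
Your reduction to the locus $Z$ of points with stabilizer of dimension $\geq 2$ matches the paper, and your pushdown of $Z$ into the high-codimension strata $\bigcup_{|\theta| \geq 2}\overline{G/[P_\theta,P_\theta]}$ is correct in spirit — indeed it follows more directly than you indicate, since by \cref{T-Free Locus is W-Sweep of Pullback of G/N Which Is Also W-Sweep of Having pm an Aomegai Coordinate Not Vanishing for Every i} and \cref{If You're Stabilized by a Dimension One Subtorus Then Up to W-Action You Project to G/Commutator of Palpha} no point of $Z$ can $\projectionFromAffineClosureofCotangentBundleToAffineClosureofSpace_w$-project to $G/N$ or any $G/[P_\alpha,P_\alpha]$, without needing your refined triple-hyperplane lattice argument. (A small side issue: your formula $\dim[P_\theta,P_\theta] = \dim N + 2|\theta|$ is only a lower bound — $\dim[P_\theta,P_\theta] = \dim N + |\Phi_\theta^+| + |\theta|$ where $\Phi_\theta^+$ is the set of positive roots of the Levi — but the inequality $\geq 2|\theta|$ is exactly what the paper proves, and is all you need.)

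The genuine gap is the fiber-dimension claim. You assert that the closed embedding of \cref{affineClosureOfCotangentBundleofBasicAffineSpace is a Closed Subscheme of |W| Many Products of G/N times LGd timesLTdsslash W LTD and Multiplication Map Is Surjective on Graded Pieces} together with conical $\mathbb{G}_m$-equivariance ``should constrain every fiber to have dimension at most $\dim B$,'' but neither of these supplies that bound. By upper semicontinuity and conicality the problem reduces to showing $\dim\projectionFromAffineClosureofCotangentBundleToAffineClosureofSpace^{-1}(0) = \dim G/N$, and this is genuinely nontrivial: the only route I see runs through identifying $\projectionFromAffineClosureofCotangentBundleToAffineClosureofSpace^{-1}(0)$ as a closed subscheme of the full moment-map fiber $\momentMapFromAFFINECLOSUREofCotangentSpacewithTOTALGROUP^{-1}(0)$ and then computing the latter's dimension via the ideal identity $R(\LG\oplus\LT) = \bigcap_w w(R_+)$, which ultimately rests on a theorem of Levasseur--Stafford about global differential operators on $G/N$. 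That is substantial extra machinery that your sketch does not acknowledge. The paper sidesteps this entirely: its proof of \cref{Locus of Points of affineClosureOfCotangentBundleofBasicAffineSpace whose T-stabilizer has dimension geq 2 has codimension at least four} never bounds fibers of $\projectionFromAffineClosureofCotangentBundleToAffineClosureofSpace$, but instead works directly on the stratification $\mathscr{S}_S$ indexed by subsets of $W\times F$, dropping dimension four times — once by cutting by an irreducible element of $A_\omega$ (using \cref{The GK Generators of ringOfFunctionsForCOTANGENTBUNDLEOfBasicAffineSpace Are Irreducible} and the UFD property \cref{The Ring ringOfFunctionsForCOTANGENTBUNDLEOfBasicAffineSpace is a UFD}), twice by GIT quotients for rank-one subtori of the stabilizer via \cref{If You're a Graded Integral Finitely Generated Domain with a Nonzero Graded Element and You Take the GIT Quotient by Gm You Lose a Dimension} and \cref{For Scheme with Gm Action Fixed Points Are Closed Subscheme of GIT Quotient}, and once more by cutting with a second irreducible. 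Your route could be made to work, but only with the additional Levasseur--Stafford input; as written the key inequality on fibers is unproven.
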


By \cref{T Stabilizer Having Dimension leq 1 Implies Smooth Point}, \cref{Codimension of Singular Locus of affineClosureOfCotangentBundleofBasicAffineSpace is At Least Four} follows directly from the following proposition:

\begin{Proposition}\label{Locus of Points of affineClosureOfCotangentBundleofBasicAffineSpace whose T-stabilizer has dimension geq 2 has codimension at least four}
    The locus of points of $\affineClosureOfCotangentBundleofBasicAffineSpace$ whose $T$-stabilizer has dimension $\geq 2$ has codimension at least four. 
\end{Proposition}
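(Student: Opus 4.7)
The plan is to adapt the approach of \cref{If You're Stabilized by a Dimension One Subtorus Then Up to W-Action You Project to G/Commutator of Palpha} to the case of $T$-stabilizer dimension at least two, and then to convert the resulting codimension-four bound in $\overline{G/N}$ into one on $\overline{T^*(G/N)}$ via the UFD structure of $R$. First, since $\pi$ is $T$-equivariant by \cref{Grading on Ring of Functions is (1) Admits Equivariant Projection and Moment Maps (2) Is Determined by T Weight and Height (3) is W-equivariant (4) is conical}(1) and the locus $X_{\geq 2}$ is $W$-stable under the Gelfand-Graev action (as $wTw^{-1}=T$ preserves stabilizer dimensions), I note that for every $\mathfrak{p}\in X_{\geq 2}$ and every $w\in W$, the projection $\pi_w(\mathfrak{p})\in \overline{G/N}$ has $T$-stabilizer of dimension at least two. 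By the explicit description of stabilizers on strata in \cref{Commutator of Parabolic Subsubsection}, which shows that a point in $G/[P_\theta,P_\theta]$ has $T$-stabilizer $\mathbb{G}_m^\theta$ of dimension $|\theta|$, this forces $\pi_w(\mathfrak{p})$ into a stratum with $|\theta|\geq 2$; by \cref{Smooth Locus Has What You Expected For Simply Connected}, this is exactly the singular locus of $\overline{G/N}$.

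The stratum $G/[P_\theta,P_\theta]$ is cut out in $\overline{G/N}$ by the vanishing of the fundamental-weight functions $A_{\omega_i}$ for $i\in\theta$, so for each $w$ there exist distinct fundamental weights $\omega_{i_w},\omega_{j_w}$ with $w^{-1}(A_{\omega_{i_w}}), w^{-1}(A_{\omega_{j_w}})\subseteq \mathfrak{m}_\mathfrak{p}$. By \cref{The GK Generators of ringOfFunctionsForCOTANGENTBUNDLEOfBasicAffineSpace Are Irreducible}, each such $w^{-1}(z)$ with nonzero $z\in A_{\omega_i}$ is irreducible in the UFD $R$ (\cref{The Ring ringOfFunctionsForCOTANGENTBUNDLEOfBasicAffineSpace is a UFD}), and two such of distinct $T$-weights are pairwise coprime (being non-associates). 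Choosing two distinct Weyl elements $w_1,w_2\in W$ produces four pairwise coprime irreducibles of distinct $T$-weights vanishing at every $\mathfrak{p}\in X_{\geq 2}$; stratifying $X_{\geq 2}$ by the finitely many combinatorial choices of $(w_1,w_2,\{i_{w_1},j_{w_1}\},\{i_{w_2},j_{w_2}\})$ reduces the claim to showing each resulting common vanishing locus has codimension at least four.

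The main obstacle is to upgrade pairwise coprimeness of four irreducibles into a codimension-four bound, which is not automatic in a UFD (for instance, $x,y,z,x+y+z$ in $k[x,y,z]$ are pairwise coprime but cut out only a codimension-three subset). I expect the argument to exploit the Cohen--Macaulay property of the smooth open locus $T^*(G/N)\subseteq \overline{T^*(G/N)}$, whose complement has codimension at least two by \cref{Complement to QuasiAffine Noetherian Scheme Must Be of Codimension At Least Two}: this reduces the codimension computation to the smooth setting where heights of ideals can be tracked via regular sequences, and the distinct $T$-weights of the four irreducibles together with the $T$-grading on $R$ should yield the needed height bound by forcing the zero loci to intersect transversally. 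Alternatively, one could try to refine the choice of $w_1,w_2$ so as to apply the lattice-point argument of \cref{In a spanning cone you can always find a spanning lattice point away from some hyperplanes} directly to the codimension-two sublattice in $\characterlatticeforT$ that contains $\mathcal{D}_\mathfrak{p}$, producing a $w$-translate of $\lambda\in\mathcal{D}_\mathfrak{p}$ whose fundamental-weight support is small enough that $\pi_w(\mathfrak{p})$ lands in a codimension-four stratum controlled by two independent vanishing conditions at once.
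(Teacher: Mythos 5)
Your opening observations are sound: by $T$-equivariance and the $T \rtimes W$-structure of the Gelfand-Graev action, every $\pi_w(\mathfrak{p})$ for $\mathfrak{p}$ with $T$-stabilizer of dimension $\geq 2$ does land in a stratum $G/[P_\theta, P_\theta]$ with $|\theta| \geq 2$, and by \cref{The GK Generators of ringOfFunctionsForCOTANGENTBUNDLEOfBasicAffineSpace Are Irreducible} the resulting elements of $R$ are irreducible. But your plan has a genuine gap, which you yourself flag at the start of your third paragraph, and neither of the two speculative repairs you sketch is actually carried out.

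A smaller issue first: you need the four $T$-weights $w_1^{-1}\omega_{i_{w_1}}, w_1^{-1}\omega_{j_{w_1}}, w_2^{-1}\omega_{i_{w_2}}, w_2^{-1}\omega_{j_{w_2}}$ to be pairwise distinct to conclude the four irreducibles are non-associates, and this is not automatic: if for instance $w_1 = 1$ and $w_2 = s_\alpha$ fixes $\omega_{i_{w_1}} = \omega_{i_{w_2}}$, two of these weights coincide and you have only three hypersurfaces. This can likely be patched, but the patch is not free.

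The real obstruction is the one you name. Four pairwise coprime irreducibles in a UFD, even homogeneous of pairwise distinct $T$-weights under a full-rank lattice grading, can cut out a locus of codimension three: in $k[x,y,z,w]$ graded by $\mathbb{Z}^2$ with $\deg(x)=(1,0)$, $\deg(y)=(0,1)$, $\deg(z)=(1,1)$, $\deg(w)=(-1,2)$, the elements $x$, $y$, $z$, $xw - y^2$ are irreducible, homogeneous, of distinct weights, and pairwise coprime, yet $V(x,y,z,xw-y^2) = V(x,y,z)$ has codimension three. So the grading hypothesis does not force the loci to ``intersect transversally,'' and the Cohen--Macaulay property of $R$ is in any case not available at this stage of the paper (it follows only \emph{after} one knows $\affineClosureOfCotangentBundleofBasicAffineSpace$ has symplectic singularities). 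Your second fix---pushing down along $\pi_w$ to a codimension-four stratum of $\affineClosureofBasicAffineSpace$---would require a bound on the fiber dimensions of $\projectionFromAffineClosureofCotangentBundleToAffineClosureofSpace$ (some form of equidimensionality) to convert codimension in $\affineClosureofBasicAffineSpace$ into codimension in $\affineClosureOfCotangentBundleofBasicAffineSpace$, and no such statement is proved in the paper.

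The paper's proof avoids hypersurface-counting entirely. It stratifies $\affineClosureOfCotangentBundleofBasicAffineSpace$ by which of the spaces $w(A_{\omega_i})$ vanish, and for each stratum $\mathscr{S}_S$ with stabilizer torus $T_S$ of dimension $\geq 2$ it builds a chain $S^0 = R, S^1, S^2, S^3, S^4$ dropping dimension by at least one at each step: one quotient by an irreducible $a_1 \in A_\omega$ with $(1,\omega) \in S$; then two \emph{GIT quotients} by rank-one subtori $T_1, T_2 \subseteq T_S$, each of which drops dimension by \cref{If You're a Graded Integral Finitely Generated Domain with a Nonzero Graded Element and You Take the GIT Quotient by Gm You Lose a Dimension}, using the key fact \labelcref{S1 Has A Nonzero Homogeneous Weight Space for Any Weight} (itself a consequence of the UFD structure and irreducibility) that $S^1_\lambda \neq 0$ for every $\lambda$; then one more quotient by the $T$-invariant element $a_2 b$ with $a_2 \in A_\omega$ linearly independent of $a_1$ and $b \in A_{w_0, w_0(-\omega)}$, whose nonvanishing in $S^3$ again follows from unique factorization. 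The GIT quotients are the crucial ingredient: they convert the stabilizer dimension directly into a dimension drop without ever needing to exhibit a regular sequence, which is precisely the mechanism your approach lacks.
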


With the exception of \cref{If You're Only Interested In the Simply Connected Case You Can Skip Ahead Remark}, the proof of \cref{Locus of Points of affineClosureOfCotangentBundleofBasicAffineSpace whose T-stabilizer has dimension geq 2 has codimension at least four} will occupy the remainder of \cref{Codimension of Singular Locus of affineClosureOfCotangentBundleofBasicAffineSpace Is At Least Four Subsection}. We first construct a stratification of $\affineClosureOfCotangentBundleofBasicAffineSpace$ by $T$-invariant locally closed subschemes which will also be used in \cref{Free Locus Has Codimension Four Subsection}. Let $F$ denote the set of fundamental weights. For any $(w, \omega) \in W \times F$, set 
\[A_{w, \omega} := w(A_{\omega}) \subseteq \ringOfFunctionsForCOTANGENTBUNDLEOfBasicAffineSpace.\] Notice that $A_{\omega} = A_{1, \omega}$ for any $\omega \in F$. By \cref{affineClosureOfCotangentBundleofBasicAffineSpace is a Closed Subscheme of |W| Many Products of G/N times LGd timesLTdsslash W LTD and Multiplication Map Is Surjective on Graded Pieces}, the sets \[\mathscr{S}_S := \cap_{(w, \omega) \in S}V(A_{w, \omega}) \cap \bigcap_{(w, \omega) \notin S}D(A_{w, \omega})\] give a stratification of $\affineClosureOfCotangentBundleofBasicAffineSpace$ by locally closed $T$-invariant subschemes, where $S$ varies over subsets of $W \times F$. Moreover, for a fixed $S$, any two closed points in $\mathscr{S}_S$ have the same (right) $T$-stabilizer $T_S$. Since there are finitely many such $\mathscr{S}_S$, \cref{Locus of Points of affineClosureOfCotangentBundleofBasicAffineSpace whose T-stabilizer has dimension geq 2 has codimension at least four} follows from the following proposition: 

\begin{Proposition}\label{Codimension of scrSsubS has Codimension at least Four}
    Assume that $S \subseteq W \times F$ such that $T_S$ has dimension at least two. Then $\mathscr{S}_S$ has codimension at least four in $\affineClosureOfCotangentBundleofBasicAffineSpace$. 
\end{Proposition}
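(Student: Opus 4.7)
My plan is to transfer the codimension-four bound on the singular stratification of $\affineClosureofBasicAffineSpace$ across the projection $\projectionFromAffineClosureofCotangentBundleToAffineClosureofSpace$. The key input is that by \cref{G Mod Commutator of Parabolic Is Qaff With Stratification} each stratum $G/[P_\theta, P_\theta]$ has codimension at least $2|\theta|$ in $\affineClosureofBasicAffineSpace$, so the union $Z_0 := \bigcup_{|\theta| \geq 2} G/[P_\theta, P_\theta]$ has codimension at least four in $\affineClosureofBasicAffineSpace$ and, by \cref{Smooth Locus Has What You Expected For Simply Connected}, contains the singular locus of $\affineClosureofBasicAffineSpace$.

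First I would show that for every $w \in W$, the image $\projectionFromAffineClosureofCotangentBundleToAffineClosureofSpace_w(\mathscr{S}_S)$ lies in $Z_0$. Indeed, $\projectionFromAffineClosureofCotangentBundleToAffineClosureofSpace_w$ is equivariant for the right $T$-action on the source conjugated by $w$, so the right $T$-stabilizer of $\projectionFromAffineClosureofCotangentBundleToAffineClosureofSpace_w(\mathfrak{p})$ contains $wT_Sw^{-1}$, a subgroup of dimension at least two. On the other hand, the right $T$-stabilizer of any point of $G/[P_\theta, P_\theta]$ equals $T \cap [P_\theta, P_\theta]$, which by the semidirect product decomposition \labelcref{Semidirect Product Decomposition for Parabolics} has identity component of dimension exactly $|\theta|$. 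Thus the locus of points of $\affineClosureofBasicAffineSpace$ with $T$-stabilizer of dimension at least two is precisely $Z_0$, and taking $w = e$ yields $\mathscr{S}_S \subseteq \projectionFromAffineClosureofCotangentBundleToAffineClosureofSpace^{-1}(Z_0)$.

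Next I would bound $\dim \projectionFromAffineClosureofCotangentBundleToAffineClosureofSpace^{-1}(Z_0)$ using the $G$-equivariance of $\projectionFromAffineClosureofCotangentBundleToAffineClosureofSpace$. Since each $G/[P_\theta, P_\theta]$ is a single $G$-orbit, the preimage $\projectionFromAffineClosureofCotangentBundleToAffineClosureofSpace^{-1}(G/[P_\theta, P_\theta])$ is $G$-equivariantly isomorphic to $G \mathop{\times}\limits^{[P_\theta, P_\theta]} F_\theta$, where $F_\theta$ denotes the scheme-theoretic fiber of $\projectionFromAffineClosureofCotangentBundleToAffineClosureofSpace$ over the basepoint. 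It therefore suffices to show $\dim F_\theta \leq \dim G/N$ for each $\theta$ with $|\theta| \geq 2$. Above the smooth locus $\smoothLocusOfAffineclosureBasicAffineSpace$, \cref{Cotangent Bundle of Smooth Locus is Fiber of affineClosureOfCotangentBundleofBasicAffineSpace Over Smooth Locus of G/N-bar} identifies the restricted projection with the cotangent bundle of $\smoothLocusOfAffineclosureBasicAffineSpace$, so fibers there have dimension exactly $\dim G/N$; combined with $\dim G/[P_\theta, P_\theta] \leq \dim \affineClosureofBasicAffineSpace - 4$, this would yield the codimension-four bound on $\mathscr{S}_S$.

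The main expected obstacle is to control the fibers $F_\theta$ over points lying in the singular locus $\affineClosureofBasicAffineSpace \setminus \smoothLocusOfAffineclosureBasicAffineSpace$, since upper semicontinuity of fiber dimension permits them to a priori jump. One way to resolve this is to use the closed embedding of \cref{affineClosureOfCotangentBundleofBasicAffineSpace is a Closed Subscheme of |W| Many Products of G/N times LGd timesLTdsslash W LTD and Multiplication Map Is Surjective on Graded Pieces} into $\LGd \times_{\LTd\sslash W} \LTd \times \bigtimes_{w \in W}\affineClosureofBasicAffineSpace$ and to argue that the conditions $\projectionFromAffineClosureofCotangentBundleToAffineClosureofSpace_w(\mathscr{S}_S) \subseteq Z_0$ for all $w$ cut $F_\theta$ out by enough simultaneous parabolic-collapse relations to force the expected dimension. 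Alternatively, one could refine \cref{If You're Stabilized by a Dimension One Subtorus Then Up to W-Action You Project to G/Commutator of Palpha} by adapting the hyperplane-avoidance argument of \cref{In a spanning cone you can always find a spanning lattice point away from some hyperplanes} to find a regular element of $\characterlatticeforT$ inside a codimension-$\dim T_S$ Weyl face, thereby pinning $\projectionFromAffineClosureofCotangentBundleToAffineClosureofSpace_w(\mathscr{S}_S)$ inside a specific stratum $G/[P_\theta, P_\theta]$ with $|\theta| = \dim T_S$, and then performing the dimension count stratum by stratum using the semidirect product decomposition \labelcref{Semidirect Product Decomposition for Parabolics} of $[P_\theta, P_\theta]$.
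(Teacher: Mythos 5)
Your proposal is conceptually appealing but has a genuine gap that you yourself partially acknowledge, and the gap is not a technicality: it is exactly the difficulty the paper's algebraic argument is engineered to avoid. The first half of your plan is fine: the right $T$-stabilizer of $\mathfrak{p} \in \mathscr{S}_S$ contains $T_S$ of dimension $\geq 2$, and since $\projectionFromAffineClosureofCotangentBundleToAffineClosureofSpace$ is $T$-equivariant the stabilizer of $\projectionFromAffineClosureofCotangentBundleToAffineClosureofSpace(\mathfrak{p})$ is at least as large; by the semidirect decomposition \labelcref{Semidirect Product Decomposition for Parabolics} the $T$-stabilizer of a point of $G/[P_\theta,P_\theta]$ has dimension $|\theta|$, so $\projectionFromAffineClosureofCotangentBundleToAffineClosureofSpace(\mathscr{S}_S)$ does land in the codimension-$\geq 4$ locus $Z_0$. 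The $G$-equivariant bundle description of $\projectionFromAffineClosureofCotangentBundleToAffineClosureofSpace^{-1}(G/[P_\theta,P_\theta])$ over a single orbit is also correct. But the argument then needs the estimate $\dim F_\theta \leq \dim G/N$ for the fiber of $\projectionFromAffineClosureofCotangentBundleToAffineClosureofSpace$ over the basepoint of a \emph{deep} stratum $|\theta| \geq 2$, and \cref{Cotangent Bundle of Smooth Locus is Fiber of affineClosureOfCotangentBundleofBasicAffineSpace Over Smooth Locus of G/N-bar} only gives this over the smooth locus $\smoothLocusOfAffineclosureBasicAffineSpace$, which \cref{Smooth Locus Has What You Expected For Simply Connected} identifies as containing the $|\theta| \leq 1$ strata. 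Upper semicontinuity allows the fiber dimension to jump on $Z_0$, and in fact it is far from obvious that it does not: the cotangent space $T_0(\affineClosureofBasicAffineSpace)$ at the cone point has dimension $\sum_i \dim E(\omega_i)$, which grows much faster than $\dim G/N$ (already for $G = \SL_n$ with $n$ moderately large this exceeds $2\dim G/N$), so a naive cotangent-space count over the cone point gives the wrong answer and something more delicate is required. Neither of your two proposed fixes closes this: invoking the closed embedding of \cref{affineClosureOfCotangentBundleofBasicAffineSpace is a Closed Subscheme of |W| Many Products of G/N times LGd timesLTdsslash W LTD and Multiplication Map Is Surjective on Graded Pieces} and saying that the constraints ``cut out enough relations'' is not an argument, and refining \cref{In a spanning cone you can always find a spanning lattice point away from some hyperplanes} to pin $\projectionFromAffineClosureofCotangentBundleToAffineClosureofSpace_w(\mathscr{S}_S)$ into a specific stratum does not help unless you also control fiber dimension over that stratum, which is the original problem.

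The paper sidesteps the fiber-dimension question entirely. Its proof of this proposition never projects to $\affineClosureofBasicAffineSpace$ at all; instead it works directly on $\Spec(\ringOfFunctionsForCOTANGENTBUNDLEOfBasicAffineSpace)$, peeling off one dimension at a time: first passing to the integral quotient $\ringOfFunctionsForCOTANGENTBUNDLEOfBasicAffineSpace/(a_1)$ for an irreducible $a_1 \in A_\omega$ with $(1,\omega) \in S$ (using \cref{The GK Generators of ringOfFunctionsForCOTANGENTBUNDLEOfBasicAffineSpace Are Irreducible} and \cref{The Ring ringOfFunctionsForCOTANGENTBUNDLEOfBasicAffineSpace is a UFD}), then taking two successive GIT quotients by one-parameter subtori of $T_S$ (each dropping the dimension by one via \cref{If You're a Graded Integral Finitely Generated Domain with a Nonzero Graded Element and You Take the GIT Quotient by Gm You Lose a Dimension}, and keeping $\mathscr{S}_S$ inside via \cref{For Scheme with Gm Action Fixed Points Are Closed Subscheme of GIT Quotient}), and finally quotienting by $a_2 b$ for a second $a_2 \in A_\omega$ and $b \in A_{w_0, w_0(-\omega)}$, with the UFD property again guaranteeing this last element is nonzero in the intermediate ring. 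This argument is purely ring-theoretic and uses the $\characterlatticeforT$-grading and factoriality where your argument would need geometric control of $\projectionFromAffineClosureofCotangentBundleToAffineClosureofSpace$ over the singular locus. If you want to rescue your approach you would need to independently prove equidimensionality (or at least a dimension bound) for $\projectionFromAffineClosureofCotangentBundleToAffineClosureofSpace$ over the singular strata, which is a substantive result not available from the cited lemmas.
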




\begin{proof}
        Fix a subset $S \subseteq W \times F$ such that $T_S$ has dimension at least two. Letting $S^0 := \ringOfFunctionsForCOTANGENTBUNDLEOfBasicAffineSpace$, we recursively construct $k$-algebras $S^1, S^2, S^3, S^4$ such that for every $i \in \{0,1,2,3\}$,  \begin{equation}\label{Dimension of Spec of Si Drops By At Least One For Each i} \text{dim}(\text{Spec}(S^{i + 1})) \leq \text{dim}(\text{Spec}(S^{i})) - 1    \end{equation} and such that by construction $\mathscr{S}_S$ is a locally closed subscheme of $\text{Spec}(S^{4})$. The existence of this construction automatically implies that $\mathscr{S}_S$ has codimension at least four. 

        Choose some fundamental weight $\omega$ such that $(1, \omega)\in S$. Such an $\omega$ exists since, if not, \[\mathscr{S}_S \subseteq \cap_{\omega}D(\ringOfFunctionsForBasicAffineSpace_{1, \omega}) = \cap_{\omega}D(\ringOfFunctionsForBasicAffineSpace_{\omega}) \subseteq \cap_{\omega}D(\ringOfFunctionsForCOTANGENTBUNDLEOfBasicAffineSpace_{\omega})\] where the intersections are taken over all fundamental weights, and therefore any point of $\mathscr{S}_S$ has no $T$-stabilizer by, for example, \cref{If you don't vanish on a set of function whose T-weights span LTd You Project into Some G/N Up To W-Action}. Choose some nonzero $a_1 \in \ringOfFunctionsForBasicAffineSpace_{\omega}$ and let $S^1 = \ringOfFunctionsForCOTANGENTBUNDLEOfBasicAffineSpace/a_1$. Note that since \[\Spec(S^1) \supseteq V(\ringOfFunctionsForBasicAffineSpace_{\omega}) = V(\ringOfFunctionsForBasicAffineSpace_{(1, \omega)})\] and $(1, \omega) \in S$, $\mathscr{S}_S$ is a locally closed subscheme of $\Spec(S^1)$. Moreover, since $\ringOfFunctionsForCOTANGENTBUNDLEOfBasicAffineSpace$ is a unique factorization domain and $a_1$ is an irreducible element of $\ringOfFunctionsForCOTANGENTBUNDLEOfBasicAffineSpace$ by \cref{The GK Generators of ringOfFunctionsForCOTANGENTBUNDLEOfBasicAffineSpace Are Irreducible}, $S^1$ is an integral domain. Since $a_1$ is a nonzero element in an integral domain, we also see that \labelcref{Dimension of Spec of Si Drops By At Least One For Each i} holds when $i = 0$. We also have that, since $a_1$ is homogeneous with respect to the $\characterlatticeforT$-grading, the ring $S^1$ admits a grading by $\characterlatticeforT$. 
        The grading on $S^1$ has the property that \begin{equation}\label{S1 Has A Nonzero Homogeneous Weight Space for Any Weight}
        S^1_{\lambda} \neq 0 \text{ for any }\lambda \in \characterlatticeforT 
        \end{equation} by \cref{affineClosureOfCotangentBundleofBasicAffineSpace is a Closed Subscheme of |W| Many Products of G/N times LGd timesLTdsslash W LTD and Multiplication Map Is Surjective on Graded Pieces} and the unique factorization of $\ringOfFunctionsForCOTANGENTBUNDLEOfBasicAffineSpace$ given by \cref{The Ring ringOfFunctionsForCOTANGENTBUNDLEOfBasicAffineSpace is a UFD}. 
        
        As $T_S$ has dimension at least two, we may choose two elements $\gamma_1, \gamma_2 \in X_{\bullet}(T_S)$ which are linearly independent in $\text{Lie}(T_S)(\mathbb{Q})$, and we denote by $T_1$, respectively $T_2$ the rank one subtori generated by $\gamma_1$, respectively $\gamma_2$. Define $S^2 := S^1_{\gamma_1 = 0}$ so that, by definition, $S^2$ is the subring of $S^1$ generated by those $S_{\lambda}^1$ such that $\langle \gamma_1, \lambda \rangle = 0$. Then since \labelcref{S1 Has A Nonzero Homogeneous Weight Space for Any Weight} holds, we may apply \cref{If You're a Graded Integral Finitely Generated Domain with a Nonzero Graded Element and You Take the GIT Quotient by Gm You Lose a Dimension} to see that \labelcref{Dimension of Spec of Si Drops By At Least One For Each i} holds when $i = 1$. Moreover, since $\mathscr{S}_S$ is a locally closed subscheme of $\Spec(S^1)^{T_1}$ and the quotient map identifies $\Spec(S^1)^{T_1}$ as a closed subscheme of $\Spec(S^2) = \Spec(S^1)\sslash T_1$ by \cref{For Scheme with Gm Action Fixed Points Are Closed Subscheme of GIT Quotient}, we see that we may view $\mathscr{S}_S$ as a locally closed subscheme of $\Spec(S^2)$. Similarly, define $S^3 = S^2_{\gamma_2 = 0}$. Since \labelcref{S1 Has A Nonzero Homogeneous Weight Space for Any Weight} holds for $\lambda$ which satisfy \[\langle \gamma_1, \lambda \rangle = 0 \neq \langle \gamma_2, \lambda \rangle\] we see that we may similarly apply \cref{If You're a Graded Integral Finitely Generated Domain with a Nonzero Graded Element and You Take the GIT Quotient by Gm You Lose a Dimension} to see that \labelcref{Dimension of Spec of Si Drops By At Least One For Each i} holds when $i = 2$, and, exactly as above, we may view $\mathscr{S}_S$ as a locally closed subscheme of $\Spec(S^3) = \Spec(S^2)\sslash T_2$. Moreover, $S^3$ is an integral domain as it is a subring of the integral domain $S^1$. 
        
        Finally, choose some $a_2 \in A_{\omega}$ such that $\{a_1, a_2\}$ is linearly independent and choose some nonzero $b \in \ringOfFunctionsForBasicAffineSpace_{w_0, w_0(-\omega)}$. If we set $S^4 := S^3/(a_2b)$ then, since $\omega \in S$, $\mathscr{S}_S$ (viewed as a locally closed subscheme of $\text{Spec}(S^3)$ via the quotient map as above) is contained $\Spec(S^4)$. We now claim that $a_2b$ is not zero in $S^3$. To see this, notice that if $a_2b = 0$ in $S^3 \subseteq S^1$ then there exists some $f \in \ringOfFunctionsForCOTANGENTBUNDLEOfBasicAffineSpace$ such that $a_1f = a_2b$ in $\ringOfFunctionsForCOTANGENTBUNDLEOfBasicAffineSpace$. However, since $a_1, a_2, b$ are irreducible in $\ringOfFunctionsForCOTANGENTBUNDLEOfBasicAffineSpace$ by \cref{The GK Generators of ringOfFunctionsForCOTANGENTBUNDLEOfBasicAffineSpace Are Irreducible}, this would violate the fact that $\ringOfFunctionsForCOTANGENTBUNDLEOfBasicAffineSpace$ is a unique factorization domain, i.e. \cref{The Ring ringOfFunctionsForCOTANGENTBUNDLEOfBasicAffineSpace is a UFD}. Since $a_2b$ is a nonzero element in the integral domain $S^3$, any irreducible component of $\text{Spec}(S^4)$ has codimension 1 in $\text{Spec}(S^3)$. 
\end{proof}
\begin{Remark}\label{If You're Only Interested In the Simply Connected Case You Can Skip Ahead Remark}
    The reader who is only interested in the proof of the main theorems when $G$ is simply connected may proceed directly to \cref{Symplectic Singularities of Normal Varieties With Small Singular Locus Subsection}, replacing the usage of \cref{Complement of Locus of Points on Which T Acts Freely for ARBITRARY REDUCTIVE GROUP Is at least Codimension Four} in final sentence of the proof of \cref{affineClosureOfCotangentBundleofBasicAffineSpace has symplectic singularities for ARBITRARY REDUCTIVE GROUP} with the usage of \cref{Codimension of Singular Locus of affineClosureOfCotangentBundleofBasicAffineSpace is At Least Four}.
\end{Remark}
\subsection{Free Locus Has Codimension Four}\label{Free Locus Has Codimension Four Subsection}
Using the notation introduced in \cref{Codimension of Singular Locus of affineClosureOfCotangentBundleofBasicAffineSpace Is At Least Four Subsection}, let $Z_i$ denote the closed subscheme $V(A_{\omega_{i}}) \cap V(A_{w_0, w_0(-\omega_i)})$, and let $U_i$ denote its open complement. In this section, we study properties of the open subscheme $Q := \cup_ww(\cap_{i = 1}^rU_i)$. We first give a more explicit description for $Q$, which can be compared to \cite[Proposition 5.1.4]{Gin}: 

\begin{Proposition}\label{T-Free Locus is W-Sweep of Pullback of G/N Which Is Also W-Sweep of Having pm an Aomegai Coordinate Not Vanishing for Every i}
    We have an equality\begin{equation}\label{W Orbit of Intersection of Nonvanishing of Coordinate Lines for Fundamental Weights is Union of W Many Preimages of Basic Affine Space}Q = \cup_w \projectionFromAffineClosureofCotangentBundleToAffineClosureofSpace^{-1}_w(G/N).    \end{equation} Moreover, $Q$ is the set of points of $\affineClosureOfCotangentBundleofBasicAffineSpace$ for which the (right) $T$ action is free and is the set of points of $\affineClosureOfCotangentBundleofBasicAffineSpace$ for which the $T$-stabilizer has dimension zero. In particular, any point of $\affineClosureOfCotangentBundleofBasicAffineSpace$ whose $T$-stabilizer has dimension zero has trivial stabilizer.
\end{Proposition}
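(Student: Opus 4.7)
The plan is to prove the chain of equalities
\[ Q \;=\; \bigcup_w \bar\pi_w^{-1}(G/N) \;=\; \{\mathfrak{p}\in\affineClosureOfCotangentBundleofBasicAffineSpace : T \text{ acts freely on } \mathfrak{p}\} \;=\; \{\mathfrak{p}\in\affineClosureOfCotangentBundleofBasicAffineSpace : \dim T_\mathfrak{p}=0\}, \]
where $T_{\mathfrak{p}}$ denotes the $T$-stabilizer of $\mathfrak{p}$. Every assertion of the proposition, including the ``In particular'' statement, is an immediate consequence of these equalities.

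For the first equality, I would first identify $G/N$ as $\bigcap_i D(A_{\omega_i})$ inside $\affineClosureofBasicAffineSpace$; this follows from \cref{Stratification of Affine Closure of Basic Affine Space by Partial Basic Affine Spaces Proposition}, since every proper stratum $G/[P_\theta,P_\theta]$ lies in the vanishing locus of some $A_{\omega_i}$. Therefore $\bar\pi_w^{-1}(G/N)=w^{-1}(\bigcap_i D(A_{\omega_i}))$, and the containment $\bigcup_w \bar\pi_w^{-1}(G/N)\subseteq Q$ is immediate from $D(A_{\omega_i})\subseteq U_i$. For the reverse containment, take $\mathfrak{p}\in Q$, write $\mathfrak{p}=w'\mathfrak{q}$ with $\mathfrak{q}\in \bigcap_i U_i$, and for each $i$ choose a sign $\varepsilon_i\in\{\pm 1\}$ so that either $\mathfrak{q}\in D(A_{\omega_i})\subseteq D(R_{\omega_i})$ (for $\varepsilon_i=+1$) or $\mathfrak{q}\in D(A_{w_0,w_0(-\omega_i)})\subseteq D(R_{-\omega_i})$ (for $\varepsilon_i=-1$); here one uses that $A_{w_0,w_0(-\omega_i)}=w_0(A_{w_0(-\omega_i)})$ has $T$-weight $-\omega_i$. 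Since the fundamental weights are a $\mathbb{Q}$-basis of $\LTd(\mathbb{Q})$, so is $\{\varepsilon_i\omega_i\}_i$ for any choice of signs, and \cref{If you don't vanish on a set of function whose T-weights span LTd You Project into Some G/N Up To W-Action} then supplies some $w$ with $\bar\pi_w(\mathfrak{q})\in G/N$. Hence $\mathfrak{p}=w'\mathfrak{q}\in w'\bar\pi_w^{-1}(G/N)=\bar\pi_{ww'^{-1}}^{-1}(G/N)$, establishing $Q\subseteq \bigcup_w \bar\pi_w^{-1}(G/N)$.

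For the remaining equalities, I would observe that $T$ acts freely on $G/N$ by right multiplication (since $T\cap N=1$), and that $\bar\pi_w$ intertwines the $T$-actions on source and target up to the Weyl automorphism $w$ of $T$, using the $T\rtimes W$-action from \cref{Gelfand-Graev Action on Affine Closure of Cotangent Bundle}. Concretely, if $t$ stabilizes $\mathfrak{p}\in\bar\pi_w^{-1}(G/N)$, then $w(t)$ stabilizes $\bar\pi_w(\mathfrak{p})\in G/N$, forcing $w(t)=1$ and hence $t=1$. This gives $\bigcup_w \bar\pi_w^{-1}(G/N)\subseteq \{T\text{ acts freely}\}\subseteq\{\dim T_{\mathfrak{p}}=0\}$. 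For the closing containment $\{\dim T_{\mathfrak{p}}=0\}\subseteq \bigcup_w\bar\pi_w^{-1}(G/N)$, I would argue as in the proof of \cref{T Stabilizer Having Dimension leq 1 Implies Smooth Point}: a zero-dimensional stabilizer forces the weights $\lambda\in\characterlatticeforT$ with $\mathfrak{p}\in D(R_\lambda)$ to span $\LTd(\mathbb{Q})$ — else, via \labelcref{cocharacterlatticeforT tensors to give Lie Algebra}, one extracts a nontrivial cocharacter annihilating each such $\lambda$ and thus stabilizing $\mathfrak{p}$ — whereupon \cref{If you don't vanish on a set of function whose T-weights span LTd You Project into Some G/N Up To W-Action} applies.

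The only genuinely non-formal step is the reverse inclusion $Q\subseteq \bigcup_w\bar\pi_w^{-1}(G/N)$, where the obstacle is that membership in $U_i$ is a disjunctive condition for each $i$ and must be parlayed into a single Weyl element handling all $i$ simultaneously. The inclusion of the ``negative'' generators $A_{w_0,w_0(-\omega_i)}$ into the definition of $U_i$ is precisely what makes the argument go through: regardless of how the signs $\varepsilon_i$ fall out, the spanning hypothesis of \cref{If you don't vanish on a set of function whose T-weights span LTd You Project into Some G/N Up To W-Action} is automatic from the fact that $\{\varepsilon_i\omega_i\}_i$ is still a basis.
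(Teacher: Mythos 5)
Your proof is correct and takes essentially the same route as the paper's: both sides reduce the first equality to choices of a sign $\varepsilon_i$ for each $i$ (i.e.\ to membership in $D(A_{\omega_i})$ or $D(A_{w_0,w_0(-\omega_i)})$), invoke \cref{If you don't vanish on a set of function whose T-weights span LTd You Project into Some G/N Up To W-Action} on the spanning set $\{\varepsilon_i\omega_i\}$, and then characterize the free locus via $T$-equivariance of $\projectionFromAffineClosureofCotangentBundleToAffineClosureofSpace$ and the cocharacter argument for dimension-zero stabilizers. You are slightly more explicit than the paper in spelling out the conjugation-by-$w$ step showing that $\projectionFromAffineClosureofCotangentBundleToAffineClosureofSpace_w^{-1}(G/N)$ (not just $\projectionFromAffineClosureofCotangentBundleToAffineClosureofSpace^{-1}(G/N)$) consists of points with trivial stabilizer, but this is the same argument the paper intends.
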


\begin{proof}
    To show $\subseteq$ in \labelcref{W Orbit of Intersection of Nonvanishing of Coordinate Lines for Fundamental Weights is Union of W Many Preimages of Basic Affine Space}, by $W$-equivariance it suffices to show $\cap_{i = 1}^rU_i \subseteq  \cup_w\projectionFromAffineClosureofCotangentBundleToAffineClosureofSpace^{-1}_w(G/N)$. Choose some homogeneous function $f_i \in A_{\omega_{i}}\cup A_{w_0, w_0(-\omega_i)}$ and let $\lambda_i \in \{\pm \omega_i\}$ denote the degree of $f_i$. It further suffices to show that \[\cap_{i = 1}^rD(f_i) \subseteq \cup_w\projectionFromAffineClosureofCotangentBundleToAffineClosureofSpace^{-1}_w(G/N)\] which follows from \cref{If you don't vanish on a set of function whose T-weights span LTd You Project into Some G/N Up To W-Action}. Conversely, by $W$-equivariance we may show the containment $\projectionFromAffineClosureofCotangentBundleToAffineClosureofSpace^{-1}(G/N) \subseteq \cup_ww(\cap_{i = 1}^rU_i)$, but this follows from the fact that $\projectionFromAffineClosureofCotangentBundleToAffineClosureofSpace^{-1}(G/N) = \cap_{i = 1}^rD(A_{\omega_i}) \subseteq \cap_{i = 1}^rU_i$. 
    By the $T$-equivariance of $\projectionFromAffineClosureofCotangentBundleToAffineClosureofSpace$, we see that any point in $\affineClosureOfCotangentBundleofBasicAffineSpace$ which maps to $G/N$ under $\projectionFromAffineClosureofCotangentBundleToAffineClosureofSpace$ must itself have trivial (right) $T$-stabilizer. Now assume that the right $T$-action on $\affineClosureOfCotangentBundleofBasicAffineSpace$ for some point $\mathfrak{p}$ has dimension zero. Then for any $\gamma \in X_{\bullet}(T)$ there exists some $\lambda \in \characterlatticeforT$ and $f \in R_{\lambda}$ such that $\langle \lambda, \gamma \rangle \neq 0$ and $f$ does not vanish at $\mathfrak{p}$, as otherwise it would be stabilized by the one parameter subgroup $\gamma: \mathbb{G}_m \xhookrightarrow{} T$. 
    In particular, if we let $S$ denote the set of all $\lambda \in \characterlatticeforT$ such that $\ringOfFunctionsForCOTANGENTBUNDLEOfBasicAffineSpace_{\lambda}$ contains a function which does not vanish at $\mathfrak{p}$, then we see that the elements of $S$ span the real points of $\LTd$.  
    Thus we see that $\mathfrak{p} \in \projectionFromAffineClosureofCotangentBundleToAffineClosureofSpace^{-1}_w(G/N)$ for some $w \in W$ by \cref{If you don't vanish on a set of function whose T-weights span LTd You Project into Some G/N Up To W-Action} and, therefore by the above must also have trivial $T$-stabilizer.
    \end{proof}

From this, we can derive the codimension result on the complement of $Q$ (for $G$ simply connected) stated in the introduction, whose proof occupies the remainder of this entire subsection: 

\begin{Corollary}\label{Complement of Q Has Codimension Four in SIMPLY CONNECTED CASE}
    The complement of $Q$ has codimension at least four. 
\end{Corollary}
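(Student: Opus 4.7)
The plan is to use \cref{T-Free Locus is W-Sweep of Pullback of G/N Which Is Also W-Sweep of Having pm an Aomegai Coordinate Not Vanishing for Every i}, which identifies the complement of $Q$ with the locus of points having positive-dimensional right $T$-stabilizer. I would split this locus into two pieces according to whether the $T$-stabilizer has dimension at least two or exactly one. The first piece has codimension at least four directly by \cref{Locus of Points of affineClosureOfCotangentBundleofBasicAffineSpace whose T-stabilizer has dimension geq 2 has codimension at least four}, so all the remaining work lies in bounding the dimension-exactly-one stratum.

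For this, I would first apply \cref{If You're Stabilized by a Dimension One Subtorus Then Up to W-Action You Project to G/Commutator of Palpha} to reduce, via the finite $W$-action, to showing that for each simple root $\alpha$ the set $E_\alpha := \pi^{-1}(G/[P_\alpha, P_\alpha]) \cap \{\mathfrak{p} : \dim(\mathrm{Stab}_T(\mathfrak{p})) = 1\}$ has codimension at least four in $\affineClosureOfCotangentBundleofBasicAffineSpace$. Combining \cref{Valpha is the expected two strata} (which identifies $V_\alpha$ with $G/N \cup G/[P_\alpha, P_\alpha]$), \cref{Smooth Locus Has What You Expected For Simply Connected} (which places $V_\alpha$ in the smooth locus $\smoothLocusOfAffineclosureBasicAffineSpace$), and \cref{Cotangent Bundle of Smooth Locus is Fiber of affineClosureOfCotangentBundleofBasicAffineSpace Over Smooth Locus of G/N-bar}, I would identify $E_\alpha$ with a subset of $T^*V_\alpha$ restricted to the zero section $G/[P_\alpha, P_\alpha]$ of the Kazhdan-Laumon symplectic bundle.

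The key geometric step is a fiberwise computation. At any $y \in G/[P_\alpha, P_\alpha]$, the $T$-stabilizer of $y$ is $\mathbb{G}_m^\alpha$, so by $T$-equivariance of $\pi$ any cotangent vector $\mathfrak{p} \in T^*_y V_\alpha$ has $T$-stabilizer contained in $\mathbb{G}_m^\alpha$; since this is a rank-one torus, the stabilizer has dimension one if and only if $\mathfrak{p}$ is fixed by the full $\mathbb{G}_m^\alpha$. Because $\mathbb{G}_m^\alpha \subseteq [P_\alpha, P_\alpha]$, the right $T$-action on $G/[P_\alpha, P_\alpha]$ factors through $T/\mathbb{G}_m^\alpha$, hence $\mathbb{G}_m^\alpha$ acts trivially on tangent spaces to the zero section, while on the rank-two fiber $F_y$ of $V_\alpha$ it acts with nonzero weights $\pm\alpha$. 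Consequently the $\mathbb{G}_m^\alpha$-fixed subspace of $T^*_y V_\alpha$ is exactly $T^*_y(G/[P_\alpha, P_\alpha])$, of dimension $\dim G/N - 2$. Summing with $\dim G/[P_\alpha, P_\alpha] = \dim G/N - 2$ yields $\dim E_\alpha \leq 2\dim G/N - 4 = \dim\affineClosureOfCotangentBundleofBasicAffineSpace - 4$, the desired bound.

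The main obstacle is the fiberwise weight calculation on $V_\alpha$: one must verify that the tangent-to-fiber splitting at the zero section is $\mathbb{G}_m^\alpha$-equivariant and that $F_y$ carries only nonzero $\alpha$-weights. It is worth noting that a purely combinatorial argument parallel to the proof of \cref{Locus of Points of affineClosureOfCotangentBundleofBasicAffineSpace whose T-stabilizer has dimension geq 2 has codimension at least four} appears to run out of room: with $T_S$ of rank only one, one can only GIT-quotient by a single $\mathbb{G}_m$, so the tally of cuts falls one short of four. Exploiting the geometry of the Kazhdan-Laumon bundle seems essential to recover the missing codimension.
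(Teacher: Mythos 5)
Your overall decomposition is the same as the paper's: split the complement of $Q$ into the dimension-$\geq 2$ stratum (handled by \cref{Locus of Points of affineClosureOfCotangentBundleofBasicAffineSpace whose T-stabilizer has dimension geq 2 has codimension at least four}) and the dimension-one stratum, reduce the latter via the $W$-action and \cref{If You're Stabilized by a Dimension One Subtorus Then Up to W-Action You Project to G/Commutator of Palpha} to a statement in $T^*(V_\alpha)$, and use \cref{Valpha is the expected two strata}, \cref{Smooth Locus Has What You Expected For Simply Connected}, and \cref{Cotangent Bundle of Smooth Locus is Fiber of affineClosureOfCotangentBundleofBasicAffineSpace Over Smooth Locus of G/N-bar} to make this identification.

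Where you diverge from the paper is in the final step. The paper uses the Gelfand-Graev action: the reflection $s_\alpha$ intertwines the $T$-actions, so a point of $T^*(V_\alpha)$ over the zero section $Y_\alpha$ with a one-dimensional stabilizer must \emph{also} be sent by the symplectic Fourier transform $S_\alpha$ to a point over the zero section; the intersection $p_\alpha^{-1}(Y_\alpha) \cap p_{\alpha,s}^{-1}(Y_\alpha)$ is then computed to have codimension four in an explicit local trivialization (\cref{The Complement of Things in Vs That Up to Simple Reflection Map to G mod N Is Cotangent Bundle of G mod Commutator of Palpha}). You instead observe directly that if $\mathfrak{p}$ lies over $y \in Y_\alpha$, then $\mathrm{Stab}_T(\mathfrak{p}) \subseteq \mathrm{Stab}_T(y) = \mathbb{G}_m^\alpha$, so a one-dimensional stabilizer forces $\mathfrak{p}$ to be $\mathbb{G}_m^\alpha$-fixed; and you identify the $\mathbb{G}_m^\alpha$-fixed subspace of $T^*_y V_\alpha$ with $T^*_y(G/Q_\alpha)$ by separating the tangent-along-base and fiber directions. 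Both routes land on the same locus --- in the paper's local trivialization your fixed locus is visibly $T^*(\tilde C) \times\{0\}\times\{0\}$ --- but your argument avoids invoking the symplectic Fourier transform and is arguably the more elementary one, at the cost of needing the explicit equivariant splitting at the zero section.

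One small imprecision worth fixing: the $\mathbb{G}_m^\alpha$-weights on the fiber $F_y$ need not be $\pm\alpha$. For $G = \SL_2$, for instance, the right $T$-action on $V_\alpha \cong k^2$ scales both coordinates by $\omega_1(t)$, so the two weights are \emph{equal}, not opposite. What your argument actually requires is only that both weights be nonzero, and this follows cleanly without any computation from the fact that the right $T$-action on $G/N = V_\alpha \setminus Y_\alpha$ is free: a vanishing weight on $F_y$ would produce a nonzero $\mathbb{G}_m^\alpha$-fixed vector in $F_y$, i.e.\ a point of $G/N$ with nontrivial $T$-stabilizer. With this replacement, and with the routine verification that the splitting $T_y V_\alpha \cong T_y(G/Q_\alpha) \oplus F_y$ is $\mathbb{G}_m^\alpha$-equivariant (immediate since the zero section and the projection $f_\alpha$ are both $T$-equivariant), your proposal is a correct and somewhat more self-contained alternative to the paper's argument.
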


\newcommand{\baseOfValpha}{Y_{\alpha}}
For this remainder of this section, we fix a simple root $\alpha$, choose an $\SL_2$-triple for $\alpha$ and use the notation of \cref{The Symplectic Vector Bundle}. We also let $p_{\alpha}: T^*(V_{\alpha}) \to V_{\alpha}$ denote the projection map and set $p_{\alpha, s} := p_{\alpha} \circ S_{\alpha}$ where $S_{\alpha}: T^*(V_{\alpha}) \to T^*(V_{\alpha})$ is the symplectic Fourier transform. We also define $\baseOfValpha := G/Q_{\alpha}$ and view $\baseOfValpha$ as a closed subscheme of $V_{\alpha}$ via the zero section map.

\begin{Lemma}\label{The Complement of Things in Vs That Up to Simple Reflection Map to G mod N Is Cotangent Bundle of G mod Commutator of Palpha}
The scheme theoretic intersection \[Z_{\alpha} := p_{\alpha}^{-1}(\baseOfValpha) \cap p_{\alpha, s}^{-1}(\baseOfValpha)\] has codimension four in $T^*(V_{\alpha})$. 
\end{Lemma}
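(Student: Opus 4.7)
The plan is to compute $Z_{\alpha}$ explicitly using the vector bundle structure on $T^*(V_{\alpha})$ and the defining property of the symplectic Fourier transform, then identify $Z_{\alpha}$ with the total space of $T^*(Y_{\alpha})$ embedded inside $T^*(V_{\alpha})$ as a conormal-type subscheme. First I would observe that $p_{\alpha}^{-1}(Y_{\alpha})$ is the restriction of the cotangent bundle $T^*(V_{\alpha})$ to the zero section $Y_{\alpha} \hookrightarrow V_{\alpha}$; since $V_{\alpha}$ is a rank $2$ vector bundle over $Y_{\alpha}$, this zero section has codimension $2$ in $V_{\alpha}$, and accordingly $p_{\alpha}^{-1}(Y_{\alpha})$ has codimension $2$ in $T^*(V_{\alpha})$.

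Next I would use the defining property of $S_{\alpha}$ that it is a (fiberwise) symplectic Fourier transform along the fibers of $V_{\alpha} \to Y_{\alpha}$, so that in local coordinates trivializing $V_{\alpha}$ as $Y_{\alpha}\times\mathbb{A}^2$, with induced coordinates $(y,\eta,v,\xi)$ on $T^*(V_{\alpha}) = T^*(Y_{\alpha})\times\mathbb{A}^2\times(\mathbb{A}^2)^{*}$, we have
\[
S_{\alpha}(y,\eta,v,\xi) = (y,\eta,\xi,-v).
\]
In these coordinates $p_{\alpha}^{-1}(Y_{\alpha}) = \{v=0\}$, while $p_{\alpha,s}^{-1}(Y_{\alpha}) = S_{\alpha}^{-1}(p_{\alpha}^{-1}(Y_{\alpha})) = \{\xi=0\}$. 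Hence $Z_{\alpha} = \{v=0,\ \xi=0\}$, which is naturally identified with $T^*(Y_{\alpha})$ sitting inside $T^*(V_{\alpha})$. Since this description is intrinsic (it is the intersection of the restriction of $T^*(V_{\alpha})$ to the zero section of $V_{\alpha}$ with the annihilator subbundle of the vertical tangent directions), it patches over an open cover trivializing $V_{\alpha}$, so no subtlety arises in globalizing from the local picture.

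Finally I would count dimensions: $\dim T^*(V_{\alpha}) = 2\dim V_{\alpha} = 2\dim Y_{\alpha} + 4$, whereas $\dim Z_{\alpha} = \dim T^*(Y_{\alpha}) = 2\dim Y_{\alpha}$, giving codimension exactly $4$. The main step requiring care is the explicit fiberwise form of $S_{\alpha}$; for this I would appeal to the explicit description in \cite[Appendix B]{GinzburgRicheDifferentialOperatorsOnBasicAffineSpaceandtheAffineGrassmannian} (specialized to $\hbar = 0$) and the construction of $V_{\alpha}$ in \cite[Section 2.1]{KazhdanLaumonGluingofPerverseSheavesandDiscreteSeriesRepresentation}, which together guarantee that $S_{\alpha}$ acts as the (rank-$2$) symplectic Fourier transform along fibers and in particular exchanges the horizontal and vertical rank-$2$ subbundles of $T^*(V_{\alpha})|_{Y_{\alpha}}$. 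Once that is in hand, the codimension statement is an immediate dimension count.
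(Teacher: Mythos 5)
Your approach is essentially identical to the paper's: both reduce, via a local trivialization of $V_{\alpha}$ over an open subset of $\baseOfValpha$, to the explicit coordinate form of the symplectic Fourier transform (from \cite[Appendix~B]{GinzburgRicheDifferentialOperatorsOnBasicAffineSpaceandtheAffineGrassmannian} at $\hbar=0$), and observe that $Z_{\alpha}$ is cut out by the simultaneous vanishing of the two rank-two fiber directions, hence has codimension four; the paper's globalization uses $G(k)$-equivariance to cover $T^*(V_{\alpha})$ by translates of one chart, which is a cleaner version of your patching remark. The only cosmetic wrinkle is that as written $S_{\alpha}(y,\eta,v,\xi)=(y,\eta,\xi,-v)$ does not typecheck since $\xi\in(\mathbb{A}^2)^*$; the paper first identifies $(\mathbb{A}^2)^*\cong\mathbb{A}^2$ via the fiberwise symplectic form, after which the transform picks up the rotation $\begin{psmallmatrix}0&1\\-1&0\end{psmallmatrix}$, but the vanishing locus $\{v=0,\ \xi=0\}$ and your dimension count are unaffected.
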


\begin{proof}
First, the symplectic Fourier transform on $V_{\alpha}$ is, by construction, an automorphism over $\baseOfValpha$. Therefore the map \[(p_{\alpha}, p_{\alpha, s}): T^*(V_{\alpha}) \to V_{\alpha} \times V_{\alpha}\] factors through the closed subscheme $V_{\alpha}\times_{\baseOfValpha} V_{\alpha}$. We wish to compute the dimension of the closed subscheme \[Z_{\alpha} = T^*(V_{\alpha}) \times_{V_{\alpha}\times_{\baseOfValpha} V_{\alpha}} \baseOfValpha\] of $T^*(V_{\alpha})$ where we regard $T^*(V_{\alpha})$ (and thus $Z_{\alpha}$) as a scheme over $\baseOfValpha$ in the natural way. It suffices to do this on an open cover of $V_{\alpha}$. 

Let $C$ denote the open $B$-orbit of $G/P_{\alpha}$, and let $\tilde{C}$ denote its inverse image under the quotient map $G/Q_{\alpha} \to G/P_{\alpha}$. Defining $U_0 := f_{\alpha}^{-1}(\tilde{C})$, the open subset $U := p_{\alpha}^{-1}(U_0)$ gives a nonempty open subscheme of $T^*(V_{\alpha})$. Moreover, since $f_{\alpha}$ and $p_{\alpha}$ are $G$-equivariant and the action of $G$ on $G/Q_{\alpha}$ is transitive, we see that $U$ and its $G(k)$-translates cover $T^*(V_{\alpha})$. It therefore suffices to show $U \cap Z_{\alpha}$ has codimension four in $U$ by $G(k)$-equivariance. 

The construction of $V_{\alpha}$ \cite[Section 2.1]{KazhdanLaumonGluingofPerverseSheavesandDiscreteSeriesRepresentation} gives a trivialization $U_0 = f_{\alpha}^{-1}(\tilde{C}) \cong \tilde{C} \times \mathbb{A}^2$ such that the formula \[(c, \begin{pmatrix}
    x_1 \\ x_2
\end{pmatrix}), (c, \begin{pmatrix}
    y_1 \\ y_2
\end{pmatrix}) \mapsto (c, x_1y_2 - y_1x_2)\] gives the symplectic form \[(\tilde{C} \times \mathbb{A}^2) \times_{\tilde{C}} (\tilde{C} \times \mathbb{A}^2) \to \tilde{C} \times \mathbb{G}_a\] on $V_{\alpha}$ restricted to this open subset. In particular, we obtain isomorphisms \begin{equation}\label{Isos Induced by Trivialization}U \cong T^*(\tilde{C} \times \mathbb{A}^2) \cong T^*(\tilde{C}) \times T^*(\mathbb{A}^2) \cong T^*(\tilde{C}) \times \mathbb{A}^2 \times \mathbb{A}^{2, *} \xrightarrow{\sim}T^*(\tilde{C}) \times \mathbb{A}^{2} \times \mathbb{A}^{2}\end{equation} where the final isomorphism is induced by the symplectic form. Under the composite identification obtained from reading \labelcref{Isos Induced by Trivialization} left to right one can directly follow the construction of the symplectic Fourier transform as in, for example \cite[Appendix B]{GinzburgRicheDifferentialOperatorsOnBasicAffineSpaceandtheAffineGrassmannian}, to see that it is given by the automorphism \[(z, \begin{pmatrix}x_1 \\ x_2 \end{pmatrix}, \begin{pmatrix}y_1 \\ y_2 \end{pmatrix}) \mapsto (z, \begin{pmatrix}y_2 \\ -y_1 \end{pmatrix}, \begin{pmatrix}-x_2 \\ x_1 \end{pmatrix})\] of $T^*(\tilde{C}) \times \mathbb{A}^{2} \times \mathbb{A}^{2}$. Therefore, using the above trivialization, we may identify $p_{\alpha}|_U$ with the map \[(z, \begin{pmatrix}x_1 \\ x_2 \end{pmatrix}, \begin{pmatrix}y_1 \\ y_2 \end{pmatrix}) \mapsto (\overline{z}, \begin{pmatrix}x_1 \\ x_2 \end{pmatrix})\]where $\overline{z}$ is the image of $p$ under the map $T^*(C) \to C$. We may similarly identify the map $p_{\alpha, s}|_U$ with \[(z, \begin{pmatrix}x_1 \\ x_2 \end{pmatrix}, \begin{pmatrix}y_1 \\ y_2 \end{pmatrix}) \mapsto (\overline{z}, \begin{pmatrix}y_2 \\ -y_1 \end{pmatrix})\] again using the identification induced by the composite of the isomorphisms of \labelcref{Isos Induced by Trivialization}. Therefore, via the isomorphisms of \labelcref{Isos Induced by Trivialization}, we can identify \[Z_{\alpha} \cap U \cong T^*(\tilde{C}) \times \{0\} \times \{0\}\] which evidently has codimension four in $U$. 
\end{proof}

\begin{proof}[Proof of \cref{Complement of Q Has Codimension Four in SIMPLY CONNECTED CASE}]
    We temporarily denote by $Z_Q$ the complement of $Q$ in $\affineClosureOfCotangentBundleofBasicAffineSpace$. Since we can identify $Q$ with the locus where $T$ acts with dimension zero stabilizers \cref{T-Free Locus is W-Sweep of Pullback of G/N Which Is Also W-Sweep of Having pm an Aomegai Coordinate Not Vanishing for Every i}, we can write $Z_Q = Z_Q^1 \cup Z_Q^{\geq 2}$ where $Z_Q^1$ denotes the open subscheme of $Z_Q$ consisting of those points whose right $T$-stabilizer has dimension one and $Z_Q^{\geq 2}$ is the closed subscheme consisting of those points whose right $T$-stabilizer has dimension at least two. By \cref{Locus of Points of affineClosureOfCotangentBundleofBasicAffineSpace whose T-stabilizer has dimension geq 2 has codimension at least four}, the codimension of $Z_Q^{\geq 2}$ is at least four, so it suffices to show that $Z_Q^1$ has codimension at least four. 

    By \cref{If You're Stabilized by a Dimension One Subtorus Then Up to W-Action You Project to G/Commutator of Palpha}, we may write $Z_Q^1$ as the union $Z_Q^1 = \cup_{w, \alpha} Z_{Q, w, \alpha}^1$, where $w$ varies over $W$, $\alpha$ varies over the simple roots, and $Z_{Q, w, \alpha}^1$ denotes the subset of points of $Z_{Q}^1$ such that $\projectionFromAffineClosureofCotangentBundleToAffineClosureofSpace_w(Z_{Q}^1) \in G/Q_{\alpha}$. In particular we have $w(Z_{Q, w, \alpha}^1) \subseteq \projectionFromAffineClosureofCotangentBundleToAffineClosureofSpace^{-1}(V_{\alpha}) \times_{V_{\alpha}} G/Q_{\alpha}$ by \cref{Valpha is the expected two strata}. Notice, however, that \[\projectionFromAffineClosureofCotangentBundleToAffineClosureofSpace^{-1}(V_{\alpha}) = \affineClosureOfCotangentBundleofBasicAffineSpace \times_{\affineClosureofBasicAffineSpace} V_{\alpha} \cong (\affineClosureOfCotangentBundleofBasicAffineSpace \times_{\affineClosureofBasicAffineSpace} \smoothLocusOfAffineclosureBasicAffineSpace) \times_{\smoothLocusOfAffineclosureBasicAffineSpace} V_{\alpha} \xleftarrow{\sim} T^*(\smoothLocusOfAffineclosureBasicAffineSpace) \times_{\smoothLocusOfAffineclosureBasicAffineSpace} V_{\alpha} \cong T^*(V_{\alpha})\] using the isomorphism given by \cref{Cotangent Bundle of Smooth Locus is Fiber of affineClosureOfCotangentBundleofBasicAffineSpace Over Smooth Locus of G/N-bar} and the containment $V_{\alpha} \subseteq \smoothLocusOfAffineclosureBasicAffineSpace$ given by \cref{Smooth Locus Has What You Expected For Simply Connected}. Therefore $w(Z_{Q, w, \alpha}^1) \subseteq T^*(V_{\alpha}) \times_{V_{\alpha}} \baseOfValpha$. We also have that $s_{\alpha}(w(Z_{Q, w, \alpha}^1)) \subseteq T^*(V_{\alpha}) \times_{V_{\alpha}} \baseOfValpha$ since if there was a point which was not contained in this closed subscheme, using the stratification $V_{\alpha} = G/N \cup \baseOfValpha$ of \cref{Valpha is the expected two strata}, we would see that $T$ acts freely on this point by \cref{T-Free Locus is W-Sweep of Pullback of G/N Which Is Also W-Sweep of Having pm an Aomegai Coordinate Not Vanishing for Every i}. Therefore $w(Z_{Q, w, \alpha}) \subseteq p_{\alpha}^{-1}(Y_{\alpha}) \cap p_{\alpha, s}^{-1}(Y_{\alpha})$ and so $w(Z_{Q, w, \alpha})$ has codimension at least four by \cref{The Complement of Things in Vs That Up to Simple Reflection Map to G mod N Is Cotangent Bundle of G mod Commutator of Palpha}. Therefore $Z_{Q, w, \alpha} = w^{-1}(w(Z_{Q, w, \alpha}))$ has codimension at least four as well. 
\end{proof}
\section{Proofs of Main Theorems}In this section, we record some consequences of our above computations. First, we extend results on $Q$ above to an arbitrary reductive group in \cref{Free Locus from Simply Connected Case Subsection}. We then show that one can derive the symplectic singularities of $\affineClosureOfCotangentBundleofBasicAffineSpace$ for all $G$ from this in \cref{Symplectic Singularities of Normal Varieties With Small Singular Locus Subsection} and then finish the proof of \cref{affineClosureOfCotangentBundleofBasicAffineSpace is Q-Factorial Terminal and is Factorial When G is Simply Connected} in \cref{Consequences of Main Theorem Subsection}. 
\subsection{Free Locus from Simply Connected Case}\label{Free Locus from Simply Connected Case Subsection}
\newcommand{\ringOfFunctionsForCOVERofCOTANGENTBUNDLEOfBasicAffineSpace}{\tilde{R}}
\newcommand{\affineClosureOfCOVERofCotangentBundleofBasicAffineSpace}{\overline{T^*(\tilde{G}/\tilde{N})}}
\newcommand{\quotientMapFromCoverOfCotangentBundletoCotangentBundle}{q}
\newcommand{\affinizationOfQuotientMapFromCoverOfCotangentBundletoCotangentBundle}{\overline{q}}
The group $G$ can be written as a quotient \begin{equation} 
    (G^{\text{sc}} \times \tilde{T})/Z \xrightarrow{\sim} G
\end{equation}
\noindent for $G^{\text{sc}}$ some simply connected semisimple group, $\tilde{T}$ some torus and $Z$ some closed finite central subgroup scheme of $\tilde{G} := G^{\text{sc}} \times \tilde{T}$. Let $\tilde{N}$ denote the unipotent radical of some Borel which projects into $B$ under the quotient map. The map $\tilde{G}/\tilde{N} \to G/N$ is a finite \'etale cover. From this, we see:

\begin{Lemma}
    The natural map induces an isomorphism \[\quotientMapFromCoverOfCotangentBundletoCotangentBundle: T^*(\tilde{G}/\tilde{N})/Z \xrightarrow{\sim} T^*(G/N)\] so that in particular if $\ringOfFunctionsForCOVERofCOTANGENTBUNDLEOfBasicAffineSpace$ denotes the ring of functions on $T^*(\tilde{G}/\tilde{N})$ we have $\ringOfFunctionsForCOTANGENTBUNDLEOfBasicAffineSpace \xrightarrow{\sim} \ringOfFunctionsForCOVERofCOTANGENTBUNDLEOfBasicAffineSpace^Z$. In other words, the map $\affinizationOfQuotientMapFromCoverOfCotangentBundletoCotangentBundle: \affineClosureOfCOVERofCotangentBundleofBasicAffineSpace \to \affineClosureOfCotangentBundleofBasicAffineSpace$ induces an isomorphism $\affinizationOfQuotientMapFromCoverOfCotangentBundletoCotangentBundle: \affineClosureOfCOVERofCotangentBundleofBasicAffineSpace\sslash Z \xrightarrow{\sim} \affineClosureOfCotangentBundleofBasicAffineSpace$. 
\end{Lemma}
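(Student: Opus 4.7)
The plan is to observe that the map $\tilde{G}/\tilde{N} \to G/N$ is a $Z$-torsor, and then propagate this through cotangent bundles and affinizations. First, I would unpack the definitions: since $Z$ is a central (hence toral) finite subgroup scheme of $\tilde{G} = G^{\text{sc}} \times \tilde{T}$ and $\tilde{N} \cong [B^{\text{sc}}, B^{\text{sc}}] \times 1$ is unipotent, $Z \cap \tilde{N}$ is trivial. It follows that $N = \tilde{N} \cdot Z/Z$ inside $G$ and that the natural map $\tilde{G}/\tilde{N} \to G/N$ factors through an isomorphism $(\tilde{G}/\tilde{N})/Z \xrightarrow{\sim} G/N$. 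Because $Z$ is finite and acts freely, this quotient is a finite \'etale Galois cover with Galois group $Z$.

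Next, I would transfer this to cotangent bundles. Since any \'etale map $Y \to X$ identifies $T^*Y$ with $Y \times_X T^*X$, we obtain a $Z$-equivariant isomorphism
\[ T^*(\tilde{G}/\tilde{N}) \xrightarrow{\sim} \tilde{G}/\tilde{N} \times_{G/N} T^*(G/N), \]
where $Z$ acts trivially on $T^*(G/N)$ and through its action on $\tilde{G}/\tilde{N}$ on the first factor of the fiber product. Taking the quotient by $Z$ on both sides (and using that forming $Z$-quotients commutes with the base change by an affine morphism) gives the desired isomorphism $\quotientMapFromCoverOfCotangentBundletoCotangentBundle: T^*(\tilde{G}/\tilde{N})/Z \xrightarrow{\sim} T^*(G/N)$.

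For the claim about functions, I would take global sections. Since $\quotientMapFromCoverOfCotangentBundletoCotangentBundle$ is a $Z$-torsor and in particular affine, the pushforward $\quotientMapFromCoverOfCotangentBundletoCotangentBundle_*\mathcal{O}_{T^*(\tilde{G}/\tilde{N})}$ is a $Z$-equivariant sheaf of algebras whose $Z$-invariants recover $\mathcal{O}_{T^*(G/N)}$. Passing to global sections yields $R \xrightarrow{\sim} \tilde{R}^Z$. The final statement about $\affinizationOfQuotientMapFromCoverOfCotangentBundletoCotangentBundle$ then follows by applying $\Spec$: by definition $\overline{T^*(\tilde{G}/\tilde{N})}\sslash Z = \Spec(\tilde{R}^Z) = \Spec(R) = \overline{T^*(G/N)}$, and this identification is compatible with the map $\affinizationOfQuotientMapFromCoverOfCotangentBundletoCotangentBundle$ induced from $\quotientMapFromCoverOfCotangentBundletoCotangentBundle$ by the universal property of affinization.

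The proof is largely formal; the only step requiring genuine care is verifying that $\tilde{G}/\tilde{N} \to G/N$ is genuinely a $Z$-torsor, which amounts to the elementary observation that the central finite subgroup $Z$ is disjoint from the unipotent radical $\tilde{N}$. Everything else follows from standard properties of finite \'etale descent.
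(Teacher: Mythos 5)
Your proposal is correct, and it follows exactly the path the paper intends: the author simply notes that $\tilde{G}/\tilde{N}\to G/N$ is a finite \'etale cover and then declares the lemma an immediate consequence (``From this, we see:''), supplying no further argument. Your write-up is essentially the same approach with the details filled in. The key steps you supply—that $Z\cap\tilde{N}$ is trivial (which in characteristic zero follows because a unipotent group has no nontrivial finite subgroup schemes), hence $Z$ acts freely on $\tilde{G}/\tilde{N}$ and the map is a $Z$-torsor; that an \'etale map identifies cotangent bundles via $T^*Y\cong Y\times_X T^*X$, so the quotient commutes through the (affine) fiber product; and that passing to global sections and then to $\Spec$ gives the statements about $\tilde{R}^Z$ and the GIT quotient—are all correct and are precisely the details the paper leaves implicit. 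One very minor point worth making explicit if you wanted a fully airtight write-up: since we are over $\mathbb{C}$, the finite group scheme $Z$ is automatically \'etale, so ``acts freely'' genuinely yields a $Z$-torsor in the usual sense; in positive characteristic this would require more care about infinitesimal stabilizers.
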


As above, let $Q$ denote the locus of points of $\affineClosureOfCotangentBundleofBasicAffineSpace$ on which $T$ acts freely.

\begin{Theorem}\label{Complement of Locus of Points on Which T Acts Freely for ARBITRARY REDUCTIVE GROUP Is at least Codimension Four}
    The locus $Q$ is smooth. Moreover, the complement of $Q$ has codimension at least four. 
\end{Theorem}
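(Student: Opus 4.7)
The plan is to reduce to the simply connected case via the finite quotient map \[\affinizationOfQuotientMapFromCoverOfCotangentBundletoCotangentBundle: \affineClosureOfCOVERofCotangentBundleofBasicAffineSpace \to \affineClosureOfCotangentBundleofBasicAffineSpace\] induced by the central isogeny $\tilde{G} \to G$. Since the derived subgroup of $\tilde{G} = G^{\text{sc}} \times \tilde{T}$ is the simply connected semisimple group $G^{\text{sc}}$, the group $\tilde{G}$ itself is simply connected, so the results of \cref{Proof of Main Theorem for Simply Connected G Section} apply. Let $T_{\tilde{G}}$ denote the maximal torus of $\tilde{G}$ (which contains $Z$, since $Z$ is central), and let $\tilde{Q} \subseteq \affineClosureOfCOVERofCotangentBundleofBasicAffineSpace$ denote the locus on which $T_{\tilde{G}}$ acts freely. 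By \cref{Complement of Q Has Codimension Four in SIMPLY CONNECTED CASE}, the complement of $\tilde{Q}$ has codimension at least four, and $\tilde{Q}$ is smooth by \cref{T Stabilizer Having Dimension leq 1 Implies Smooth Point}, since freeness of the $T_{\tilde{G}}$-action on $\tilde{Q}$ a fortiori gives $0$-dimensional stabilizers.

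The central step is to identify $\affinizationOfQuotientMapFromCoverOfCotangentBundletoCotangentBundle^{-1}(Q) = \tilde{Q}$. Since $Z$ is central in $\tilde{G}$ and contained in $T_{\tilde{G}}$, the map $\affinizationOfQuotientMapFromCoverOfCotangentBundletoCotangentBundle$ is equivariant for the surjection $T_{\tilde{G}} \twoheadrightarrow T_{\tilde{G}}/Z \simeq T$, and its set-theoretic fibers are precisely the $Z$-orbits. A direct chase then shows that for any $x \in \affineClosureOfCOVERofCotangentBundleofBasicAffineSpace$, the $T$-stabilizer of $\affinizationOfQuotientMapFromCoverOfCotangentBundletoCotangentBundle(x)$ equals the image under $T_{\tilde{G}} \twoheadrightarrow T$ of the $T_{\tilde{G}}$-stabilizer of $x$. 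Hence $\affinizationOfQuotientMapFromCoverOfCotangentBundletoCotangentBundle(x) \in Q$ if and only if the $T_{\tilde{G}}$-stabilizer of $x$ is contained in the finite group $Z$, i.e.\ has dimension zero; by \cref{T-Free Locus is W-Sweep of Pullback of G/N Which Is Also W-Sweep of Having pm an Aomegai Coordinate Not Vanishing for Every i} applied to $\tilde{G}$, this is equivalent to the $T_{\tilde{G}}$-stabilizer being trivial, i.e.\ $x \in \tilde{Q}$.

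Once the equality $\affinizationOfQuotientMapFromCoverOfCotangentBundletoCotangentBundle^{-1}(Q) = \tilde{Q}$ is established, both assertions follow quickly. For the codimension bound, $\affinizationOfQuotientMapFromCoverOfCotangentBundletoCotangentBundle$ is a finite surjective morphism between irreducible varieties of the same dimension, so the codimension of the complement of $Q$ in $\affineClosureOfCotangentBundleofBasicAffineSpace$ equals that of the complement of $\tilde{Q}$ in $\affineClosureOfCOVERofCotangentBundleofBasicAffineSpace$, which is at least four. For smoothness, the finite group $Z \subseteq T_{\tilde{G}}$ acts freely on $\tilde{Q}$ because $T_{\tilde{G}}$ does, so the restriction $\affinizationOfQuotientMapFromCoverOfCotangentBundletoCotangentBundle|_{\tilde{Q}}: \tilde{Q} \to Q$ is a finite \'etale cover, exhibiting $Q$ as the quotient of the smooth variety $\tilde{Q}$ by a free action of a finite group, and is therefore smooth.

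The main obstacle I anticipate is precisely the stabilizer comparison in the middle paragraph. In particular, promoting ``$T_{\tilde{G}}$-stabilizer contained in $Z$'' to ``$T_{\tilde{G}}$-stabilizer trivial'' is not formal: it uses the full strength of \cref{T-Free Locus is W-Sweep of Pullback of G/N Which Is Also W-Sweep of Having pm an Aomegai Coordinate Not Vanishing for Every i}, which was one of the principal outputs of the simply connected analysis and is in turn built on the codimension-four estimate of \cref{Locus of Points of affineClosureOfCotangentBundleofBasicAffineSpace whose T-stabilizer has dimension geq 2 has codimension at least four}. All remaining steps, by contrast, are elementary consequences of the fact that $\affinizationOfQuotientMapFromCoverOfCotangentBundletoCotangentBundle$ is a finite quotient by a central subgroup acting freely on the free locus.
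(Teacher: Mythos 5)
Your proposal is correct and follows essentially the same route as the paper: reduce to the simply connected case via the finite quotient map $\affinizationOfQuotientMapFromCoverOfCotangentBundletoCotangentBundle$, identify $\affinizationOfQuotientMapFromCoverOfCotangentBundletoCotangentBundle^{-1}(Q)$ with the $T_{\tilde{G}}$-free locus by using \cref{T-Free Locus is W-Sweep of Pullback of G/N Which Is Also W-Sweep of Having pm an Aomegai Coordinate Not Vanishing for Every i} to promote ``finite $T_{\tilde{G}}$-stabilizer'' to ``trivial $T_{\tilde{G}}$-stabilizer,'' and then deduce smoothness from the free $Z$-quotient and the codimension bound from surjectivity of the finite map. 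The only difference from the paper's write-up is that you spell out the stabilizer comparison and the \'etale-cover argument more explicitly, which the paper leaves terse.
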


\begin{proof}

We see that $\affinizationOfQuotientMapFromCoverOfCotangentBundletoCotangentBundle^{-1}(Q)$ is precisely the locus on which the maximal torus in $\tilde{G}$ acts with finite stabilizer group (since $\affinizationOfQuotientMapFromCoverOfCotangentBundletoCotangentBundle$ is a finite map). By \cref{T-Free Locus is W-Sweep of Pullback of G/N Which Is Also W-Sweep of Having pm an Aomegai Coordinate Not Vanishing for Every i} (which also evidently applies in the case where $G$ is the product of a simple simply connected group with some torus) we see that $\affinizationOfQuotientMapFromCoverOfCotangentBundletoCotangentBundle^{-1}(Q)$ is also the locus on which $\tilde{T}$ acts freely. In particular, the map $\affinizationOfQuotientMapFromCoverOfCotangentBundletoCotangentBundle^{-1}(Q) \to Q$ induces an isomorphism $\affinizationOfQuotientMapFromCoverOfCotangentBundletoCotangentBundle^{-1}(Q)/Z \xrightarrow{\sim} Q$. This shows that $Q$ is smooth. Moreover, the fact that $\affinizationOfQuotientMapFromCoverOfCotangentBundletoCotangentBundle$ is a surjection also shows that the map \[\affinizationOfQuotientMapFromCoverOfCotangentBundletoCotangentBundle|_{\affineClosureOfCOVERofCotangentBundleofBasicAffineSpace\setminus\affinizationOfQuotientMapFromCoverOfCotangentBundletoCotangentBundle^{-1}(Q)}: \affineClosureOfCOVERofCotangentBundleofBasicAffineSpace\setminus\affinizationOfQuotientMapFromCoverOfCotangentBundletoCotangentBundle^{-1}(Q) \to \affineClosureOfCotangentBundleofBasicAffineSpace\setminus Q\] is dominant. In particular, we see that the fact that the complement of $\affinizationOfQuotientMapFromCoverOfCotangentBundletoCotangentBundle^{-1}(Q)$ has codimension at least four (\cref{Complement of Q Has Codimension Four in SIMPLY CONNECTED CASE}) implies that the complement of $Q$ has codimension at least 4. 
\end{proof}

\subsection{Symplectic Singularities of Normal Varieties Via Codimension}\label{Symplectic Singularities of Normal Varieties With Small Singular Locus Subsection}
In this section, we prove \cref{Affine Closure of Cotangent Bundle of Basic Affine Space Has Symplectic Singularities}. When $G$ is semisimple, we have shown that $\affineClosureOfCotangentBundleofBasicAffineSpace$ admits a conical $\mathbb{G}_m$-action compatible with the Poisson bracket in \cref{Grading on Functions on T*(G/N) Subsection}. Therefore, it remains to show the following, which we prove for arbitrary reductive $G$:

\begin{Theorem}\label{affineClosureOfCotangentBundleofBasicAffineSpace has symplectic singularities for ARBITRARY REDUCTIVE GROUP}
    The variety $\affineClosureOfCotangentBundleofBasicAffineSpace$ has symplectic singularities.
\end{Theorem}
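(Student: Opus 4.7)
The plan is to invoke the criterion recalled in the introduction as \cref{Normal Variety with Nondeg Form on Enough of Regular Locus with High Dimensional Singular Locus Has Symplectic Singularities}: a normal variety $X$ whose smooth locus contains an open subset $U$ with $X \setminus U$ of codimension at least four, such that $U$ carries an algebraic symplectic form, has symplectic singularities. Normality of $\affineClosureOfCotangentBundleofBasicAffineSpace$ is provided by \cref{affineClosureOfCotangentBundleofBasicAffineSpace is normal and Q-factorial for all G and is factorial when G is simply connected}, and the codimension-four hypothesis is precisely \cref{Complement of Locus of Points on Which T Acts Freely for ARBITRARY REDUCTIVE GROUP Is at least Codimension Four}, applied with $U := Q$. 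The remaining task is therefore to equip $Q$ with an algebraic symplectic form.

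For this, note that $T^*(G/N) = \projectionFromAffineClosureofCotangentBundleToAffineClosureofSpace^{-1}(G/N)$ is an open subscheme of $Q$ carrying its canonical Liouville symplectic form $\omega$, and that its complement in $Q$ has codimension at least two by \cref{Complement to QuasiAffine Noetherian Scheme Must Be of Codimension At Least Two}. Since $Q$ is smooth by \cref{Complement of Locus of Points on Which T Acts Freely for ARBITRARY REDUCTIVE GROUP Is at least Codimension Four}, the sheaf $\Omega^2_Q$ is locally free, so sections extend uniquely across this codimension-two closed subset; thus $\omega$ admits a unique extension $\tilde{\omega}$ to a regular $2$-form on $Q$, and closedness of $\tilde{\omega}$ is forced by closedness on the dense open $T^*(G/N)$. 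It remains to verify non-degeneracy of $\tilde{\omega}$ at every closed point of $Q$.

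The non-degeneracy step is the main obstacle. Here I would combine the decomposition $Q = \bigcup_{w \in W} \projectionFromAffineClosureofCotangentBundleToAffineClosureofSpace_w^{-1}(G/N)$ from \cref{T-Free Locus is W-Sweep of Pullback of G/N Which Is Also W-Sweep of Having pm an Aomegai Coordinate Not Vanishing for Every i} with the Gelfand-Graev action of \cref{Gelfand-Graev Action on Affine Closure of Cotangent Bundle}. Given any $p \in Q$, one selects $w \in W$ with $w(p) \in T^*(G/N)$, so that if $w: \affineClosureOfCotangentBundleofBasicAffineSpace \to \affineClosureOfCotangentBundleofBasicAffineSpace$ preserves $\tilde{\omega}$ on the smooth locus, then $\tilde{\omega}_p = (dw_p)^*\omega_{w(p)}$ is non-degenerate because $\omega_{w(p)}$ is. Thus the proof reduces to showing that the Gelfand-Graev action is Poisson.

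To establish this final ingredient, I would trace through the construction of the Gelfand-Graev simple reflections recalled in \cref{The Symplectic Vector Bundle}: the simple reflection $s_\alpha$ on $\ringOfFunctionsForCOTANGENTBUNDLEOfBasicAffineSpace$ is defined as the composition of the restriction isomorphisms $\ringOfFunctionsForCOTANGENTBUNDLEOfBasicAffineSpace \xleftarrow{\sim} \mathcal{O}(T^*(V_\alpha))$ (valid because the complement of $T^*(G/N)$ in $T^*(V_\alpha)$ has codimension two) with the pullback along the symplectic Fourier transform $S_\alpha: T^*(V_\alpha) \to T^*(V_\alpha)$, which is tautologically a symplectomorphism of $T^*(V_\alpha)$. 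Since the restriction maps identify the symplectic form on $T^*(G/N)$ with the canonical symplectic form on $T^*(V_\alpha)$ (both being inherited from the cotangent bundle structure and extended uniquely across codimension two), the action of each $s_\alpha$ on $\ringOfFunctionsForCOTANGENTBUNDLEOfBasicAffineSpace$ preserves the Poisson bracket, and the same property for the full Weyl group follows by composition. This delivers the required symplectomorphism property of $w$, completing the proof.
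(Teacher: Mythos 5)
Your overall strategy matches the paper's: reduce to \cref{Normal Variety with Nondeg Form on Enough of Regular Locus with High Dimensional Singular Locus Has Symplectic Singularities} using normality (\cref{affineClosureOfCotangentBundleofBasicAffineSpace is normal and Q-factorial for all G and is factorial when G is simply connected}), the codimension-four bound (\cref{Complement of Locus of Points on Which T Acts Freely for ARBITRARY REDUCTIVE GROUP Is at least Codimension Four}), and extension of the Liouville form from $T^*(G/N)$ across a codimension-two locus. The conclusion is correct. However, your entire second half --- the verification of non-degeneracy via the Gelfand--Graev action, the decomposition $Q = \bigcup_w \projectionFromAffineClosureofCotangentBundleToAffineClosureofSpace_w^{-1}(G/N)$, and the Poisson property of the Weyl group action --- is unnecessary work.

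Once $\tilde{\omega}$ is known to extend to a regular $2$-form on the smooth (connected) $2n$-dimensional open set, non-degeneracy is automatic: $\tilde{\omega}^{\wedge n}$ is a section of the canonical line bundle, and its vanishing locus, if nonempty, is a divisor. Since you already know $\tilde{\omega}^{\wedge n}$ is non-vanishing away from a closed subset of codimension at least two, that vanishing locus must be empty. This is exactly what \cref{Normal Variety with Nondeg Form on Enough of Regular Locus with High Dimensional Singular Locus Has Symplectic Singularities}(1) delivers (citing Beauville), and the paper simply invokes it; there is no separate non-degeneracy step to discharge. Your Gelfand--Graev route is a workable alternative, but it forces you to establish that the Weyl group action is Poisson, which in turn requires verifying that the symplectic Fourier transform is a symplectomorphism and that the restriction identifications respect the symplectic structures --- none of which the paper needs for this theorem. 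In short: correct proof, same skeleton as the paper, but the final two paragraphs can be deleted in favour of the one-line Pfaffian argument baked into \cref{Normal Variety with Nondeg Form on Enough of Regular Locus with High Dimensional Singular Locus Has Symplectic Singularities}(1).
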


We recall the following standard lemma on extendability of differential forms as applied to the theory of symplectic singularities:

\begin{Lemma}\label{Normal Variety with Nondeg Form on Enough of Regular Locus with High Dimensional Singular Locus Has Symplectic Singularities} Assume $X$ is some irreducible variety. \begin{enumerate}
    \item Assume $Z$ is some closed subscheme of $X^{\text{reg}}$ whose codimension is larger than 1. Then if $\omega \in \Omega^2(X^{\text{reg}}\setminus Z)$ is some symplectic form, then $\omega$ extends to a symplectic form on $X^{\text{reg}}$. 
    \item If in addition $X$ is normal and $\text{codim}_X(X\setminus X^{\text{reg}}) \geq 4$, $X$ has symplectic singularities.
\end{enumerate}
\end{Lemma}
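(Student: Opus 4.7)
The plan is to establish (1) by a standard Hartogs-type extension of differential forms across a codimension-two subset of a smooth variety, and then to deduce (2) by combining (1) with the extendability theorems of Steenbrink-van Straten and Flenner cited in the introduction.

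For part (1), first I would extend $\omega$ across $Z$. Since $X^{\text{reg}}$ is smooth, the sheaf $\Omega^2_{X^{\text{reg}}}$ is locally free, and on a smooth Noetherian scheme any locally free sheaf satisfies algebraic Hartogs: sections defined on the complement of a closed subset of codimension at least two extend uniquely. Applying this to $Z \subseteq X^{\text{reg}}$ yields a unique extension $\tilde\omega$ of $\omega$ to all of $X^{\text{reg}}$. The condition $d\tilde\omega = 0$ is closed and holds on the dense open $X^{\text{reg}} \setminus Z$, hence on $X^{\text{reg}}$.

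For non-degeneracy of $\tilde\omega$, let $2n = \dim X$ and consider the top exterior power $\tilde\omega^n$, which is a section of the line bundle $\Omega^{2n}_{X^{\text{reg}}}$. On a smooth variety, the vanishing locus of a nonzero section of a line bundle is either empty or a Cartier divisor of pure codimension one. Since $\tilde\omega^n$ is non-vanishing on $X^{\text{reg}} \setminus Z$ by assumption, its zero locus is contained in $Z$, which has codimension at least two; hence the zero locus is empty, and $\tilde\omega$ is symplectic on $X^{\text{reg}}$.

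For part (2), recall Beauville's definition: a normal variety $X$ has symplectic singularities if $X^{\text{reg}}$ admits a symplectic form $\omega$ such that for some (equivalently any) resolution $\pi: \tilde X \to X$, the pullback $\pi^*\omega|_{\pi^{-1}(X^{\text{reg}})}$ extends to a regular $2$-form on $\tilde X$. Under our hypotheses, $X$ is normal, and part (1) provides a symplectic form on $X^{\text{reg}}$. The only remaining condition to check is the extendability of $\pi^*\omega$ across the exceptional locus of $\pi$. This is precisely the range in which the theorems of Steenbrink-van Straten \cite{StratenSteenbrinkExtendabilityofHolomorphicDifferentialFormsNearIsolatedHypersurfaceSingularities} and Flenner \cite{FlennerExtendabilityofDifferentialFormsonNonisolatedSingularities} apply: for a normal variety whose singular locus has codimension at least four, any regular $2$-form on the smooth locus extends to a regular $2$-form on any resolution. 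The main potential obstacle is simply matching the precise statement of the Flenner-type result to the form needed here, since the literature phrases these extendability results in several equivalent ways (extendability across the exceptional divisor, reflexivity of the pushforward of $\Omega^2$, etc.); none of this requires further work beyond citation once (1) is in hand.
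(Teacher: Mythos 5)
Your proposal takes the same route as the paper: a Hartogs-type extension for part (1) and Beauville's definition combined with Flenner's extendability theorem for part (2). You have simply filled in the routine details the paper leaves implicit (checking closedness of $\tilde\omega$ and non-degeneracy via the vanishing locus of $\tilde\omega^n$ being contained in a codimension-$\geq 2$ set), which is correct, and the codimension threshold is exactly what Flenner's theorem requires.
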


\begin{proof}
    A standard Hartog's lemma argument gives that $\omega$ extends to a symplectic form on $X^{\text{reg}}$, see, for example, \cite[Section 4, Remarque (3)]{BeauvilleVarietiesesKahleriennesDontLaPremiereClassedeChernestNulle}. Now, any normal irreducible variety with a nondegenerate $2$-form on the regular locus has symplectic singularities if and only if, for any resolution $p: Y \to X$ of singularities, the induced 2-form $p^*(\omega)$ extends to a 2-form on $Y$. However, since the codimension of the singular locus is larger than 3, this extension follows directly from the main theorem of \cite{FlennerExtendabilityofDifferentialFormsonNonisolatedSingularities}. 
\end{proof}

\begin{proof}[Proof of \cref{affineClosureOfCotangentBundleofBasicAffineSpace has symplectic singularities for ARBITRARY REDUCTIVE GROUP}]
    We check the hypotheses of \cref{Normal Variety with Nondeg Form on Enough of Regular Locus with High Dimensional Singular Locus Has Symplectic Singularities}. The fact that $\affineClosureOfCotangentBundleofBasicAffineSpace$ is normal follows from \cref{affineClosureOfCotangentBundleofBasicAffineSpace is normal and Q-factorial for all G and is factorial when G is simply connected}. The variety $T^*(G/N)$ is an open subscheme of $\affineClosureOfCotangentBundleofBasicAffineSpace$ whose complement has codimension at least two by \cref{Complement to QuasiAffine Noetherian Scheme Must Be of Codimension At Least Two}, so the symplectic form on $T^*(G/N)$ necessarily extends to a symplectic form on the smooth locus of $\affineClosureOfCotangentBundleofBasicAffineSpace$. Finally, the fact that  $\affineClosureOfCotangentBundleofBasicAffineSpace$ has a singular locus of codimension $\geq 4$ follows from \cref{Complement of Locus of Points on Which T Acts Freely for ARBITRARY REDUCTIVE GROUP Is at least Codimension Four}.
\end{proof}


\subsection{Consequences of Main Theorem}\label{Consequences of Main Theorem Subsection} We have seen that $\affineClosureOfCotangentBundleofBasicAffineSpace$ has symplectic singularities in \cref{affineClosureOfCotangentBundleofBasicAffineSpace has symplectic singularities for ARBITRARY REDUCTIVE GROUP} and that its singular locus has codimension at least four in \cref{Complement of Locus of Points on Which T Acts Freely for ARBITRARY REDUCTIVE GROUP Is at least Codimension Four}. Recall that all singular symplectic varieties whose singular locus has codimension at least four are terminal by the main result of \cite{NamikawaANoteOnSymplecticSingularities}. Therefore we immediately obtain the following result which, combined with \cref{affineClosureOfCotangentBundleofBasicAffineSpace is normal and Q-factorial for all G and is factorial when G is simply connected}, completes the proof of \cref{affineClosureOfCotangentBundleofBasicAffineSpace is Q-Factorial Terminal and is Factorial When G is Simply Connected}:

\begin{Corollary}\label{affineClosureOfCotangentBundleofBasicAffineSpace has Terminal Singularities}
    The variety $\affineClosureOfCotangentBundleofBasicAffineSpace$ has terminal singularities.
\end{Corollary}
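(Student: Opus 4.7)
The proof is essentially a direct citation, and the two inputs have already been assembled in the paper. My plan is to invoke Namikawa's theorem \cite{NamikawaANoteOnSymplecticSingularities}, which asserts that a symplectic singularity is terminal if and only if its singular locus has codimension at least four. Both hypotheses are in hand from the immediately preceding results.

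First, I would recall that $\affineClosureOfCotangentBundleofBasicAffineSpace$ has symplectic singularities by \cref{affineClosureOfCotangentBundleofBasicAffineSpace has symplectic singularities for ARBITRARY REDUCTIVE GROUP}. Second, I would recall that $Q \subseteq \affineClosureOfCotangentBundleofBasicAffineSpace$ is smooth with complement of codimension at least four, by \cref{Complement of Locus of Points on Which T Acts Freely for ARBITRARY REDUCTIVE GROUP Is at least Codimension Four}; in particular, the singular locus of $\affineClosureOfCotangentBundleofBasicAffineSpace$ is contained in this complement and hence has codimension at least four. Applying Namikawa's criterion then yields that $\affineClosureOfCotangentBundleofBasicAffineSpace$ has terminal singularities.

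There is no real obstacle here: all of the substantive work has been done in the preceding sections (in particular, the codimension-four bound for the non-free locus in \cref{Free Locus Has Codimension Four Subsection} and its extension to arbitrary reductive $G$ in \cref{Free Locus from Simply Connected Case Subsection}, together with the verification of the symplectic-singularity hypotheses in \cref{Symplectic Singularities of Normal Varieties With Small Singular Locus Subsection}). The only care required is to note that the smoothness of $Q$ implies that the singular locus is contained in $\affineClosureOfCotangentBundleofBasicAffineSpace \setminus Q$, so the codimension bound on the latter transfers to the former.
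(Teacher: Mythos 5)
Your proof is correct and matches the paper's own argument exactly: both cite Namikawa's result that a symplectic variety whose singular locus has codimension at least four is terminal, feed in \cref{affineClosureOfCotangentBundleofBasicAffineSpace has symplectic singularities for ARBITRARY REDUCTIVE GROUP} for the symplectic-singularity hypothesis, and feed in \cref{Complement of Locus of Points on Which T Acts Freely for ARBITRARY REDUCTIVE GROUP Is at least Codimension Four} for the codimension bound. Your explicit remark that smoothness of $Q$ forces the singular locus into the complement of $Q$ is the same (brief) observation the paper makes implicitly.
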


We also claim that, if $G$ is semisimple and not a product of copies of $\SL_2$, that $\affineClosureOfCotangentBundleofBasicAffineSpace$ is singular: 

\begin{Proposition}
When $G$ is semisimple and not a product of copies of $\SL_2$, the cone point $0 \in \affineClosureOfCotangentBundleofBasicAffineSpace$ is singular.
\end{Proposition}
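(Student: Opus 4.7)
The plan is to argue by contradiction: suppose the cone point $0 \in \affineClosureOfCotangentBundleofBasicAffineSpace$ is smooth. By \cref{Grading on Ring of Functions is (1) Admits Equivariant Projection and Moment Maps (2) Is Determined by T Weight and Height (3) is W-equivariant (4) is conical}(4) the $c$-grading on $R$ is conical, so $R^0 = k$ and the maximal ideal at $0$ is $\mathfrak{m} = \bigoplus_{i > 0} R^i$. Graded Nakayama combined with the smoothness assumption then gives a graded isomorphism $R \cong \Sym(V)$, where $V := \mathfrak{m}/\mathfrak{m}^2$ is concentrated in positive $c$-degrees.

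Next I use the Poisson bracket, which has $c$-degree $-2$ by \cref{Poisson Bracket for affineClosureOfCotangentBundleofBasicAffineSpace Has Degree -2}. The symplectic form on $T^*(G/N)$ extends across $0$ by \cref{Normal Variety with Nondeg Form on Enough of Regular Locus with High Dimensional Singular Locus Has Symplectic Singularities}(1) and the assumed smoothness at $0$, producing a nondegenerate Poisson pairing $V \times V \to k$ given by $(f,g) \mapsto \{f,g\}(0)$. But for $f \in V^a$ and $g \in V^b$ one has $\{f,g\} \in R^{a+b-2}$, which is a nonzero constant at $0$ only when $a+b = 2$. Since $V^0 = 0$, the pairing vanishes outside $V^1 \times V^1$, so nondegeneracy forces $V = V^1$; equivalently, $R$ must be generated as a $k$-algebra in $c$-degree $1$.

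By \cref{affineClosureOfCotangentBundleofBasicAffineSpace is a Closed Subscheme of |W| Many Products of G/N times LGd timesLTdsslash W LTD and Multiplication Map Is Surjective on Graded Pieces}, $R$ is generated as a $k$-algebra by $\LG$, $\LT$, and the subspaces $w(A_\omega)$ for $w \in W$ and $\omega$ a fundamental weight, whose $c$-degrees are $2$, $2$, and $\langle \omega, 2\rhocheck \rangle$ respectively by \cref{Grading on Ring of Functions is (1) Admits Equivariant Projection and Moment Maps (2) Is Determined by T Weight and Height (3) is W-equivariant (4) is conical}(2). For $\omega = \omega_i$, the quantity $\langle \omega_i, 2\rhocheck \rangle$ equals the sum over positive coroots $\alpha^\vee$ of the coefficient of $\alpha_i^\vee$ in the expansion of $\alpha^\vee$ in simple coroots; since every simple factor of $G$ other than $\SL_2$ has a connected Dynkin diagram with at least two nodes and therefore contributes at least one further positive coroot (e.g.\ the highest coroot) with positive $\alpha_i^\vee$-coefficient, $\langle \omega_i, 2\rhocheck \rangle = 1$ holds precisely when $\omega_i$ belongs to an $\SL_2$ simple factor of $G$. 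Thus every $c$-degree $1$ generator of $R$ has $T$-weight supported on the $\SL_2$-factor directions of $\characterlatticeforT$.

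Since $G$ is semisimple and not a product of copies of $\SL_2$, there is a simple factor $G' \neq \SL_2$; pick a fundamental weight $\omega'$ of $G'$ and a nonzero $a \in A_{\omega'}$. The weight $\omega'$ of $a$ projects nontrivially to the $G'$-summand of $\characterlatticeforT$, whereas every $T$-weight of $V^1$ projects trivially there; since the $T$-weights of $\Sym(V^1)$ are $\mathbb{Z}_{\geq 0}$-combinations of $T$-weights of $V^1$, they too project trivially, contradicting the existence of $a$ in $R = \Sym(V^1)$. The main obstacle is the Poisson-degree count that forces $V$ entirely into $c$-degree $1$; once this is in hand, the final contradiction is a routine $T$-weight check.
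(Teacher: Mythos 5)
Your proof is correct, and it takes a genuinely different route from the paper's. The paper first shows the cone point of $\affineClosureOfCotangentBundleofBasicAffineSpace$ lies in the image of the affinized zero section $\overline{z}\colon \affineClosureofBasicAffineSpace \hookrightarrow \affineClosureOfCotangentBundleofBasicAffineSpace$, invokes the fact that $\affineClosureofBasicAffineSpace$ itself is singular at its cone point (a consequence of Levasseur--Stafford's computation of differential operators on $G/N$), and then concludes by a tangent-space dimension count: the projection $\projectionFromAffineClosureofCotangentBundleToAffineClosureofSpace$ has a section, so it induces a surjection $T_0(\affineClosureOfCotangentBundleofBasicAffineSpace) \twoheadrightarrow T_0(\affineClosureofBasicAffineSpace)$, whose kernel already has dimension $\geq \dim(G/N)$ by upper semicontinuity of fiber dimension, giving $\dim T_0(\affineClosureOfCotangentBundleofBasicAffineSpace) > \dim \affineClosureOfCotangentBundleofBasicAffineSpace$. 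By contrast, you argue by contradiction entirely internally to $R$: smoothness at $0$ plus the conical grading (\cref{Grading on Ring of Functions is (1) Admits Equivariant Projection and Moment Maps (2) Is Determined by T Weight and Height (3) is W-equivariant (4) is conical}) yields $R \cong \Sym(V)$; the degree-$(-2)$ Poisson bracket (\cref{Poisson Bracket for affineClosureOfCotangentBundleofBasicAffineSpace Has Degree -2}) and the extension of the symplectic form (\cref{Normal Variety with Nondeg Form on Enough of Regular Locus with High Dimensional Singular Locus Has Symplectic Singularities}(1)) force $V$ into $c$-degree $1$; and the generator list from \cref{affineClosureOfCotangentBundleofBasicAffineSpace is a Closed Subscheme of |W| Many Products of G/N times LGd timesLTdsslash W LTD and Multiplication Map Is Surjective on Graded Pieces} together with the computation $c(w(a)) = \langle \omega, 2\rhocheck\rangle$ shows that no $c$-degree-$1$ generator can carry a $T$-weight from a non-$\SL_2$ simple factor, which contradicts the presence of $A_{\omega'} \subseteq R$. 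All the intermediate steps check out: the graded isomorphism $R \cong \Sym(V)$ follows from graded Nakayama plus the equality of Krull dimensions; the nondegeneracy of the pairing on $V^1$ forces $V = V^1$ since $V^{\geq 2}$ would lie in the radical; the claim that $\langle \omega_i, 2\rhocheck\rangle \geq 2$ unless $\alpha_i$ belongs to an $\SL_2$ factor is correct because the highest coroot of a rank-$\geq 2$ simple factor has all simple-coroot coefficients positive; and $W$ preserves the decomposition of $\characterlatticeforT\otimes\mathbb{Q}$ into simple-factor summands, so $w(\omega)$ stays supported on $\SL_2$ directions. Your approach has the virtue of not invoking the external singularity statement for $\affineClosureofBasicAffineSpace$, relying instead on the Poisson/grading machinery already developed in the paper; the paper's proof is shorter once the Levasseur--Stafford input is taken as given.
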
 

\begin{proof}
    Notice that the ideal cutting out the image of the closed embedding given by the zero section \[\overline{z}: \affineClosureofBasicAffineSpace \xhookrightarrow{} \affineClosureOfCotangentBundleofBasicAffineSpace\] is homogeneous for both the torus action and the usual $\mathbb{G}_m$-action induced by scaling fibers on $T^*(G/N)$. Therefore, by \cref{Grading on Ring of Functions is (1) Admits Equivariant Projection and Moment Maps (2) Is Determined by T Weight and Height (3) is W-equivariant (4) is conical}(2), this ideal is also homogeneous for the conical $\mathbb{G}_m$-action. In particular, the cone point is contained in this closed subscheme. 
    
    It is well known that $\affineClosureofBasicAffineSpace$ is singular when $G$ is semisimple and not a product of copies of $\SL_2$--for example, this follows from the fact that the ring of differential operators on any smooth affine variety is generated by derivations \cite[Corollary 15.6]{McConnellRobsonNoncommutativeNoetherianRings} but the results of \cite{LevasseurStaffordDifferentialOperatorsandCohomologyGroupsontheBasicAffineSpace} show that this is not the case for such $G$. Since the singular locus of a scheme is closed and the singular locus of a scheme with a group action is closed under the action of that group, we see that the singular locus of $\affineClosureofBasicAffineSpace$ contains the cone point for such $G$ since any nonempty closed $\mathbb{G}_m$-invariant subscheme of $\affineClosureofBasicAffineSpace$ contains the cone point. Therefore $\text{dim}(T_0(\affineClosureofBasicAffineSpace)) > d := \text{dim}(G/N)$, and, since $\overline{z}\circ \projectionFromAffineClosureofCotangentBundleToAffineClosureofSpace = \text{id}$, $\projectionFromAffineClosureofCotangentBundleToAffineClosureofSpace$ induces a surjective map \begin{equation}\label{Surjection on Tangent Spaces}T_0(\affineClosureOfCotangentBundleofBasicAffineSpace) \to T_0(\affineClosureofBasicAffineSpace)\end{equation} on tangent spaces. 
    
    Note also that, by for example \cref{Cotangent Bundle of Smooth Locus is Fiber of affineClosureOfCotangentBundleofBasicAffineSpace Over Smooth Locus of G/N-bar}, the generic fiber of $\projectionFromAffineClosureofCotangentBundleToAffineClosureofSpace$ has dimension exactly $d$. Therefore, by upper semicontinuity of the fiber dimension, we see that the fiber $F$ of $\projectionFromAffineClosureofCotangentBundleToAffineClosureofSpace$ at the cone point of $\affineClosureOfCotangentBundleofBasicAffineSpace$ has dimension at least $d$. In particular, $T_0(F)$ has dimension at least $d$ and lies in the kernel of the map \labelcref{Surjection on Tangent Spaces}. Therefore by rank-nullity the dimension of $T_0(\affineClosureOfCotangentBundleofBasicAffineSpace)$ is larger than $d + d = \text{dim}(\affineClosureOfCotangentBundleofBasicAffineSpace)$, and so the cone point is singular. 
\end{proof}

On the other hand, the codimension of the singular locus of $\affineClosureOfCotangentBundleofBasicAffineSpace$ is at least four by \cref{Complement of Locus of Points on Which T Acts Freely for ARBITRARY REDUCTIVE GROUP Is at least Codimension Four}. Thus the $\mathbb{Q}$-factoriality of $\affineClosureOfCotangentBundleofBasicAffineSpace$ in \cref{affineClosureOfCotangentBundleofBasicAffineSpace is normal and Q-factorial for all G and is factorial when G is simply connected} allows us to use \cite[Corollary 1.3]{FuSymplecticResolutionsforNilpotentOrbits} to show the following, which generalizes a remark of \cite[Section 1.3]{GinzburgKazhdanDifferentialOperatorsOnBasicAffineSpaceandtheGelfandGraevAction} for $G = \SL_3$ to all types:

\begin{Corollary}
    If $G$ is semisimple and not a product of copies of $\SL_2$ then the variety $\affineClosureOfCotangentBundleofBasicAffineSpace$ does not admit a symplectic resolution.
\end{Corollary}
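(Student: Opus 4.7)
The plan is to apply Fu's criterion \cite[Corollary 1.3]{FuSymplecticResolutionsforNilpotentOrbits}, which states that a $\mathbb{Q}$-factorial symplectic singularity (equivalently, a $\mathbb{Q}$-factorial variety with symplectic singularities and terminal singularities) admits a symplectic resolution if and only if it is already smooth. Thus to conclude the corollary, it suffices to verify for such $G$ that $\affineClosureOfCotangentBundleofBasicAffineSpace$ meets the three hypotheses of Fu's result and is not smooth.

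First, I would collect the three inputs from earlier in the paper. The $\mathbb{Q}$-factoriality of $\affineClosureOfCotangentBundleofBasicAffineSpace$ is \cref{affineClosureOfCotangentBundleofBasicAffineSpace is normal and Q-factorial for all G and is factorial when G is simply connected}. The existence of symplectic singularities is \cref{affineClosureOfCotangentBundleofBasicAffineSpace has symplectic singularities for ARBITRARY REDUCTIVE GROUP} (which uses, in particular, that the complement of $Q$ has codimension at least four, \cref{Complement of Locus of Points on Which T Acts Freely for ARBITRARY REDUCTIVE GROUP Is at least Codimension Four}). The terminality of the singularities is \cref{affineClosureOfCotangentBundleofBasicAffineSpace has Terminal Singularities}, obtained from Namikawa's result together with the codimension-four bound. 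Together, these three facts put $\affineClosureOfCotangentBundleofBasicAffineSpace$ into the precise class of varieties to which Fu's criterion applies.

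Second, I need to verify that $\affineClosureOfCotangentBundleofBasicAffineSpace$ is not smooth when $G$ is semisimple and not a product of copies of $\SL_2$. But this is exactly the content of the proposition immediately preceding the corollary: the cone point $0 \in \affineClosureOfCotangentBundleofBasicAffineSpace$ is singular in this case, as the fiber of $\projectionFromAffineClosureofCotangentBundleToAffineClosureofSpace$ at $0$ together with the singular tangent direction from $\affineClosureofBasicAffineSpace$ forces the embedding dimension at $0$ to exceed $\dim(\affineClosureOfCotangentBundleofBasicAffineSpace)$.

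Combining these, Fu's criterion immediately rules out any symplectic resolution: if $p: Y \to \affineClosureOfCotangentBundleofBasicAffineSpace$ were one, then since $\affineClosureOfCotangentBundleofBasicAffineSpace$ is $\mathbb{Q}$-factorial with terminal symplectic singularities, $p$ must be an isomorphism, contradicting the singularity of the cone point. There is no real obstacle here; the work has already been done in establishing the three structural properties and the non-smoothness, and the corollary is a direct invocation of Fu's theorem.
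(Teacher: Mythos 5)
Your proposal matches the paper's proof exactly: both invoke Fu's Corollary 1.3 using the $\mathbb{Q}$-factoriality from \cref{affineClosureOfCotangentBundleofBasicAffineSpace is normal and Q-factorial for all G and is factorial when G is simply connected} together with terminality (from the codimension-four bound of \cref{Complement of Locus of Points on Which T Acts Freely for ARBITRARY REDUCTIVE GROUP Is at least Codimension Four} via Namikawa), and both use the preceding proposition to see that $\affineClosureOfCotangentBundleofBasicAffineSpace$ is not smooth for such $G$. The only minor caveat is that your parenthetical gloss of Fu's criterion reads as if $\mathbb{Q}$-factorial symplectic singularities are automatically terminal, which is not true in general, but since terminality is established independently here this does not affect the argument.
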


The fact that $\affineClosureOfCotangentBundleofBasicAffineSpace$ admits conical symplectic singularities for semisimple $G$ implies that is a natural object in the study of symplectic duality. The following result determines properties of the conjectural symplectic dual to $\affineClosureOfCotangentBundleofBasicAffineSpace$ and should be compared to the expectations of \cite[Section 8]{DancerHanayKirwanSymplecticDualityandImplosions}:

\begin{Corollary}\label{There are no nontrivial flat Poisson deformations of ringOfFunctionsForCOTANGENTBUNDLEOfBasicAffineSpace} There are no nontrivial flat Poisson deformations of $\affineClosureOfCotangentBundleofBasicAffineSpace$.
\end{Corollary}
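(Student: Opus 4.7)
The plan is to reduce the claim, via Namikawa's description of Poisson deformations of conical symplectic singularities, to a cohomological vanishing statement that follows from our earlier structural results on $X := \affineClosureOfCotangentBundleofBasicAffineSpace$. Concretely, Namikawa has shown that for an affine conical symplectic variety which is $\mathbb{Q}$-factorial and has terminal singularities, the flat Poisson deformation functor $\mathrm{PD}_X$ is unobstructed with tangent space canonically identified with $H^2(X^{\mathrm{reg}}, \mathbb{C})$; in particular, no nontrivial flat Poisson deformation exists as soon as this cohomology group vanishes. All of the required hypotheses on $X$ have been established: $X$ has conical symplectic singularities by \cref{Affine Closure of Cotangent Bundle of Basic Affine Space Has Symplectic Singularities}, is $\mathbb{Q}$-factorial by \cref{affineClosureOfCotangentBundleofBasicAffineSpace is normal and Q-factorial for all G and is factorial when G is simply connected}, and has terminal singularities by \cref{affineClosureOfCotangentBundleofBasicAffineSpace has Terminal Singularities}. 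It thus suffices to prove $H^2(X^{\mathrm{reg}}, \mathbb{C}) = 0$.

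For this, I would apply the analytic exponential sequence on the complex manifold $X^{\mathrm{reg}}$ to extract an exact segment
\[
\mathrm{Pic}(X^{\mathrm{reg}}) \xrightarrow{c_1} H^2(X^{\mathrm{reg}}, \mathbb{Z}) \to H^2(X^{\mathrm{reg}}, \mathcal{O}).
\]
Two facts combine to force the middle group to vanish after tensoring with $\mathbb{C}$. First, the Picard group on the left is finite: $X^{\mathrm{reg}}$ is smooth so its Picard group agrees with its class group; the complement of $X^{\mathrm{reg}}$ in $X$ has codimension at least four by \cref{Complement of Locus of Points on Which T Acts Freely for ARBITRARY REDUCTIVE GROUP Is at least Codimension Four}, so this class group agrees with $\mathrm{Cl}(X)$; and $\mathrm{Cl}(X)$ is finite by \cref{Class Group of AffineClosureofCotangentBundleofBasicAffineSpace Vanishes}. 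Second, the structure sheaf cohomology on the right vanishes: $X$ has rational singularities (being symplectic) and is affine, so $H^i(X, \mathcal{O}_X) = 0$ for $i > 0$; combined with the Cohen--Macaulay property and the codimension $\geq 4$ estimate on the singular locus, the local cohomology long exact sequence gives an isomorphism $H^i(X, \mathcal{O}_X) \xrightarrow{\sim} H^i(X^{\mathrm{reg}}, \mathcal{O})$ for $i \leq 2$. Combining, $H^2(X^{\mathrm{reg}}, \mathbb{Z})$ is a quotient of the finite group $\mathrm{Pic}(X^{\mathrm{reg}})$, so $H^2(X^{\mathrm{reg}}, \mathbb{C}) = 0$.

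The main obstacle I expect is bibliographic and technical rather than mathematical: one must pin down the precise form of Namikawa's deformation theorem being invoked, matching conventions on graded versus ungraded and formal versus infinitesimal Poisson deformations, and one must handle the algebraic-versus-analytic distinction in both the exponential sequence (which is intrinsically analytic) and the depth argument. These issues are resolved by working throughout in the analytic category, where $X$ is Stein as a complex-analytic space and both Cartan's Theorem B and the depth estimate for Cohen--Macaulay analytic sheaves apply directly.
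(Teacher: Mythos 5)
Your reduction is the same as the paper's: invoke Namikawa's theory of Poisson deformations for conical symplectic varieties with terminal singularities to identify the relevant obstruction space with $H^2(X^{\mathrm{reg}},\mathbb{C})$, and then show this cohomology vanishes by relating it to the (finite) Picard/class group. Where you diverge is in the final step. The paper does not run the exponential sequence; it instead cites Lemma 4.4.6 of Losev--Mason-Brown--Matvieievskyi, which asserts directly that the first Chern class gives an isomorphism $\mathrm{Pic}(X^{\mathrm{reg}})\otimes_{\mathbb{Z}}\mathbb{Q}\xrightarrow{\sim}H^2(X^{\mathrm{reg}},\mathbb{Q})$, after which finiteness of $\mathrm{Pic}(X^{\mathrm{reg}})$ (which, as you note, follows from $\mathrm{Cl}(X)$ being finite and the codimension-$\geq 2$ complement) immediately gives the vanishing.

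Your route---exponential sequence plus finiteness of $\mathrm{Pic}$ plus depth/local-cohomology vanishing of $H^2(\mathcal{O})$---is essentially an unpacking of that lemma, and it is in fact the argument that appears in a commented-out draft version of this proof in the source. It was evidently set aside in favor of the citation, most likely because of exactly the subtlety you flag: the exponential sequence is analytic, so the term that controls $H^2(X^{\mathrm{reg}},\mathbb{Z})$ is the \emph{analytic} Picard group, whereas the finiteness you establish via $\mathrm{Cl}(X)$ is a statement about the \emph{algebraic} Picard group, and for a non-complete smooth variety these need not agree. Your proposed fix---``work throughout in the analytic category, where $X$ is Stein''---does dispose of the $H^i(\mathcal{O})$ terms via Cartan's Theorem B and an analytic depth estimate, but it does not by itself control the analytic Picard group: with $H^1(\mathcal{O}^{\mathrm{an}})=H^2(\mathcal{O}^{\mathrm{an}})=0$ you get $\mathrm{Pic}^{\mathrm{an}}(X^{\mathrm{reg}})\cong H^2(X^{\mathrm{reg}},\mathbb{Z})$, which makes the argument circular unless you separately import the finiteness of $\mathrm{Pic}^{\mathrm{an}}$ from $\mathrm{Pic}^{\mathrm{alg}}$. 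That comparison is where Hodge-theoretic input (vanishing of $H^{0,2}$ and $H^{2,0}$ modulo the symplectic form on the smooth locus of a symplectic variety) is genuinely doing work, and it is precisely what the LMBM lemma encapsulates. So: the skeleton of your proof is right, and the $\mathbb{Q}$-factorial/terminal/conical hypotheses are correctly assembled from the earlier results, but the exponential-sequence step needs either the LMBM citation or an explicit argument that the algebraic-to-analytic comparison of line bundles is a rational isomorphism here.
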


\begin{proof}
        It suffices to show that the vector space $HP^2(\affineClosureOfCotangentBundleofBasicAffineSpace)$ is zero, see, for example, \cite{GinzburgKaledinPoissonDeformationsofSymplecticQuotientSingularities}, \cite{NamikawaPoissonDeformationsofAffineSymplecticVarieties}, \cite{NamikawaFlopsandPoissonDeformationsofSymplecticVarieties}. In fact, $HP^2(\mathcal{Y})$ vanishes for any normal affine variety $\mathcal{Y}$ with terminal symplectic singularities and finite class group. We give the proof of this well known result here for completeness.
    
    Since $\mathcal{Y}$ has terminal symplectic singularities, we have that $HP^2(\mathcal{Y}) \cong H^2(Y, \mathbb{C})$, where $Y$ denotes the smooth locus of $\mathcal{Y}$ \cite{NamikawaPoissonDeformationsofAffineSymplecticVarieties}. 
    Now \cite[Lemma 4.4.6]{LosevMasonBrownMatvieievskyiUnipotentIdealsandHarishChandraBimodules} shows that the first Chern class gives an isomorphism $\text{Pic}(Y) \otimes_{\mathbb{Z}} \mathbb{Q} \xrightarrow{\sim} H^2(Y, \mathbb{Q})$. Therefore, since $\text{Pic}(Y)$ is finite (since it is a subgroup of the class group of $\mathcal{Y}$) we see that $H^2(Y, \mathbb{C}) \cong H^2(Y, \mathbb{Q}) \otimes_{\mathbb{Q}} \mathbb{C} \cong 0$ as desired. 
\end{proof}

\printbibliography
\end{document}